\newtheorem{thm}{Theorem}[section]
\newtheorem{lem}[thm]{Lemma}
\newtheorem{prop}[thm]{Proposition}
\newtheorem{thmintro}{Theorem}
\theoremstyle{definition}
\newtheorem{defn}[thm]{Definition}
\newtheorem{rem}[thm]{Remark}
\newcommand{\N}{\mathbb N}
\newcommand{\Z}{\mathbb Z}
\newcommand{\Q}{\mathbb Q}
\newcommand{\R}{\mathbb R}
\newcommand{\C}{\mathbb C}
\newcommand{\F}{\mathbb F}
\newcommand{\mf}{\mathfrak}
\newcommand{\mc}{\mathcal}
\newcommand{\mb}{\mathbf}
\newcommand{\mh}{\mathbb}
\def\cC{{\mathcal C}}
\def\cV{{\mathcal V}}
\def\cE{{\mathcal E}}
\def\Irr{{\rm Irr}}
\newcommand{\mr}{\mathrm}
\newcommand{\ind}{\mathrm{ind}}
\newcommand{\enuma}[1]{\begin{enumerate}[\textup{(}a\textup{)}] {#1} \end{enumerate}}
\newcommand{\Fr}{\mathrm{Frob}}
\newcommand{\ad}{\mathrm{ad}}
\newcommand{\Rep}{\mathrm{Rep}}
\newcommand{\Res}{\mathrm{Res}}
\newcommand{\der}{\mathrm{der}}
\newcommand{\Mod}{\mathrm{Mod}}
\newcommand{\Hom}{\mathrm{Hom}}
\newcommand{\End}{\mathrm{End}}
\newcommand{\sgn}{\mathrm{sgn}}
\newcommand{\pt}{\mathrm{pt}}
\newcommand{\Modf}[1]{\mathrm{Mod}_{\mathrm{fl}, #1}}
\newcommand{\Ad}{\mathrm{Ad}}
\newcommand{\pr}{\mathrm{pr}}
\newcommand{\reg}{\mathrm{reg}}
\newcommand{\IC}{\mathrm{IC}}
\newcommand{\Ql}{\overline{\mathbb Q_\ell}}
\newcommand{\Ext}{\mathrm{Ext}}
\begin{document}

\title[Hecke algebras and constructible sheaves on the nilpotent cone]{Graded 
Hecke algebras and equivariant constructible sheaves on the nilpotent cone}
\date{\today}
%\thanks{The author is supported by a NWO Vidi grant "A Hecke algebra approach to the 
%local Langlands correspondence" (nr. 639.032.528).}
\subjclass[2010]{20C08, 14F08, 22E57}
\maketitle

\vspace{5mm}

\begin{center}
{\Large Maarten Solleveld} \\[1mm]
IMAPP, Radboud Universiteit Nijmegen\\
Heyendaalseweg 135, 6525AJ Nijmegen, the Netherlands \\
email: m.solleveld@science.ru.nl 
\end{center}
\vspace{5mm}

\begin{abstract}
Graded Hecke algebras can be constructed geometrically, with constructible sheaves and equivariant
cohomology. The input consists of a complex reductive group $G$ (possibly disconnected) and a cuspidal 
local system on a nilpotent orbit for a Levi subgroup of $G$. We prove that every such ``geometric" 
graded Hecke algebra is naturally isomorphic to the endomorphism algebra of a certain 
$G \times \C^\times$-equivariant semisimple complex of sheaves on the nilpotent cone $\mf g_N$ 
in the Lie algebra of $G$.

From there we provide an algebraic description of the $G \times \C^\times$-equivariant bounded 
derived category of constructible sheaves on $\mf g_N$. Namely, it is equivalent with the bounded
derived category of finitely generated differential graded modules of a suitable direct sum of
graded Hecke algebras. This can be regarded as a categorification of graded Hecke algebras.

This paper prepares for a study of representations of reductive $p$-adic groups with
a fixed infinitesimal central character. In sequel papers \cite{SolSheaves,SolStand}, that will lead 
to proofs of the generalized injectivity conjecture and of the Kazhdan--Lusztig conjecture for 
$p$-adic groups. 
\end{abstract}

\vspace{5mm}

\tableofcontents

\section*{Introduction}

The story behind this paper started with with the seminal work of Kazhdan and Lusztig \cite{KaLu}.
They showed that an affine Hecke algebra $\mc H$ is naturally isomorphic with a $K$-group of equivariant 
coherent sheaves on the Steinberg variety of a complex reductive group. (Here $\mc H$ has a formal 
variable $\mb q$ as single parameter and the reductive group must have simply connected derived group.)
This isomorphism enables one to regard the category of equivariant coherent sheaves on that particular 
variety as a categorification of an affine Hecke algebra. Later that became quite an important theme 
in the geometric Langlands program, see for instance \cite{Bez,ChGi,MiVi}.

Our paper is inspired by the quest for a generalization of such a categorification of $\mc H$
to affine Hecke algebras with more than one $q$-parameter. That is relevant because such algebras
arise in abundance from reductive $p$-adic groups and types \cite[\S 2.4]{ABPS}. However, up to today
it is unclear how several independent $q$-parameters can be incorporated in a setup with equivariant
$K$-theory or $K$-homology. The situation improves when one formally completes an affine Hecke algebra
with respect to (the kernel of) a central character, as in \cite{Lus-Gr}. Such a completion is
Morita equivalent with a completion of a graded Hecke algebra with respect to a central character.\\

Graded Hecke algebras $\mh H$ with several parameters (now typically called $k$) do admit a geometric
interpretation \cite{Lus-Cusp1,Lus-Cusp2}. (Not all combinations of parameters occur though,
there are conditions on the ratios between the different $k$-parameters.) For this reason graded
Hecke algebras, instead of affine Hecke algebras, play the main role in this paper. 

Such algebras, and minor generalizations called twisted graded Hecke algebras, appear in several
independent ways. Consider a connected reductive group $\mc G$ defined over a non-archimedean
local field $F$. Let $\Rep (\mc G (F))^{\mf s}$ be any Bernstein block in the category of (complex,
smooth) $\mc G (F)$-representations. Locally on the space of characters of the Bernstein centre
of $\mc G (F)$, $\Rep (\mc G (F))^{\mf s}$ is always equivalent with the module category of some 
twisted graded Hecke algebra \cite[\S 7]{SolEnd}. This is derived from an equivalence of 
$\Rep (\mc G (F))^{\mf s}$ with the module category of an algebra which is almost an affine Hecke
algebra, established in full generality in \cite{SolEnd}.

The same kind of algebras arise from enhanced Langlands parameters for $\mc G (F)$ \cite{AMS2}.
That construction involves complex geometry and the cuspidal support map for enhanced L-parameters
from \cite{AMS1}. It matches specific sets of enhanced L-parameters for $\mc G (F)$ with specific
sets of irreducible representations of twisted graded Hecke algebras, see \cite{AMS2,AMS3}.

Like affine Hecke algebras, graded Hecke algebras are related to equivariant sheaves on varieties
associated to complex reductive groups. However, here the sheaves must be constructible and one
uses equivariant cohomology instead of equivariant K-theory. Just equivariant sheaves do not 
suffice to capture the entire structure of graded Hecke algebras, one rather needs differential
complexes of those. Thus we arrive at the (bounded) equivariant derived categories of 
constructible sheaves from \cite{BeLu}. Via intersection cohomology, such objects have many
applications in representation theory, see for instance \cite{Lus-IC,Ach}.\\

\textbf{Main results}\\
Let $G$ be a complex reductive group and let $M$ be a Levi subgroup of $G$. To cover all
enhanced Langlands parameters for $p$-adic groups and all instances of twisted graded Hecke algebras
mentioned above, we must allow disconnected reductive groups. 
Let $q\cE$ be an irreducible $M$-equivariant cuspidal local system on a nilpotent orbit in the Lie 
algebra of $M$. From these data a twisted graded Hecke algebra $\mh H (G,M,q\cE)$ can be constructed 
\cite[\S 4]{AMS2}. As a graded vector space, it is the tensor product of:
\begin{itemize}
\item the algebra $\mc O (\mf t)$ of polynomial functions on Lie$(Z(M^\circ)) = \mf t$, 
with grading 2 times the standard grading,
\item $\C [\mb r]$, where $\mb r$ is a formal variable of degree 2,
\item the (twisted) group algebra of a finite ``Weyl-like" group $W_{q\cE}$ (in degree 0).
\end{itemize}
We will work in $\mc D^b_{G \times \C^\times}(X)$, the $G \times \C^\times$-equivariant bounded 
derived category of constructible sheaves on a complex variety $X$ \cite{BeLu}.
We let $G$ act on its Lie algebra $\mf g$ via the adjoint representation, and we let $\lambda \in 
\C^\times$ act on $\mf g$ as multiplication by $\lambda^{-2}$. 
In \cite{Lus-Cusp1,Lus-Cusp2,AMS2} an important object $K \in \mc D^b_{G \times \C^\times} (\mf g)$ 
was constructed from $q\cE$, by a process that bears some similarity with parabolic induction. 
With $G^\circ$ instead of $G \times \C^\times$, $K$ would be a character sheaf as in \cite{Lus-Char}.
In general it does not fit entirely with Lusztig's notion of character sheaves on disconnected 
reductive groups, because those are only $G^\circ$-equivariant.

Let $\mf g_N$ be the variety of nilpotent elements in the Lie algebra $\mf g$ of $G$ and let $K_N$ 
be the pullback of $K$ to $\mf g_N$. Up to degree shifts, both $K$ and $K_N$ are direct sums of 
simple perverse sheaves. This $K_N$ generalizes the equivariant perverse sheaves used to establish 
the (generalized) Springer correspondence \cite{Lus-Int}. The following was already known for
connected $G$, from \cite{Lus-Cusp1,Lus-Cusp2}, while for disconnected $G$ it follows quickly
from \cite{AMS2}.

\begin{thmintro}\label{thm:A}
(see Theorem \ref{thm:1.2}) \\
There exist natural isomorphisms of graded algebras
\[
\mh H (G,M,q\cE) \longrightarrow \End^*_{\mc D^b_{G \times \C^\times}(\mf g)}(K) \longrightarrow
\End^*_{\mc D^b_{G \times \C^\times}(\mf g_N)}(K_N) .
\]
\end{thmintro}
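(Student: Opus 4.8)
The plan is to establish the two isomorphisms separately. For the first arrow $\mh H(G,M,q\cE) \to \End^*_{\mc D^b_{G \times \C^\times}(\mf g)}(K)$, I would follow the strategy of Lusztig \cite{Lus-Cusp1,Lus-Cusp2}, adapted to the possibly disconnected setting as in \cite{AMS2}. The object $K$ is built by a parabolic-induction-type construction from the cuspidal pair $(M, q\cE)$; concretely one takes the ``admissible'' variety $\dot{\mf g}$ fibered over $\mf g$ with a proper map $\pi$, pushes forward the local system coming from $q\cE$, and obtains $K = \pi_! (\text{shifted local system})$, a semisimple $G \times \C^\times$-equivariant complex. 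Its endomorphism algebra $\End^*_{\mc D^b_{G \times \C^\times}(\mf g)}(K)$ is computed by a Mayer--Vietoris / base-change argument: $\End^*(K)$ is the $G \times \C^\times$-equivariant (Borel--Moore) homology of the fiber product $\dot{\mf g} \times_{\mf g} \dot{\mf g}$ with coefficients in the external tensor product of the local systems. One then decomposes this fiber product into pieces indexed by $W_{q\cE}$ (the analogue of the Bruhat-type stratification used for the Steinberg variety), shows each piece contributes one copy of $\C[\mf t] \otimes \C[\mb r]$, and checks that the multiplication matches the defining relations of $\mh H(G,M,q\cE)$ — in particular the cross-relations between $W_{q\cE}$ and the polynomial part. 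The grading bookkeeping (factor of two on $\mf t$, degree of $\mb r$) comes precisely from the conventions for $G \times \C^\times$-equivariant cohomology, where the $\C^\times$ supplies the variable $\mb r$.

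For the second arrow $\End^*_{\mc D^b_{G \times \C^\times}(\mf g)}(K) \to \End^*_{\mc D^b_{G \times \C^\times}(\mf g_N)}(K_N)$, I would argue that pullback along the inclusion $i \colon \mf g_N \hookrightarrow \mf g$ induces an isomorphism on these particular Ext-algebras, even though $i^*$ is far from an equivalence of categories. The key point is contraction: $\mf g$ admits a $\C^\times$-action (scaling) that contracts $\mf g$ onto $\mf g_N$ — more precisely, the adjoint quotient $\mf g \to \mf t /W$ together with the scaling action means that, $G \times \C^\times$-equivariantly, cohomological computations on $\mf g$ localize to the central fiber $\mf g_N$. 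Since $K$ is ($\C^\times$-equivariantly) constructible with respect to the stratification by the image in $\mf t/W$ and is built by a homogeneous construction, one expects $R\Hom_{\mf g}(K,K) \simeq R\Hom_{\mf g_N}(i^*K, i^*K)$ as graded algebras. I would make this precise either via a deformation-retract argument in the equivariant derived category (using that $\mf g$ is a $\C^\times$-cone with vertex set $\mf g_N$, so hypercohomology over $\mf g$ agrees with hypercohomology of the restriction to the nilpotent cone), or by a direct comparison of the two fiber-product descriptions: $\dot{\mf g} \times_{\mf g} \dot{\mf g}$ versus its nilpotent version, showing the inclusion of the latter is a homotopy equivalence compatible with the equivariant structure.

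The main obstacle I anticipate is the second isomorphism rather than the first: proving that $i^*$ does not lose any Ext-information requires care about the non-compactness of $\mf g$ and the precise interaction of the two $\C^\times$-actions (the one in $G \times \C^\times$ used to define the equivariant category, versus the scaling contraction). One must verify that the relevant spectral sequences or long exact sequences degenerate, and that no higher Ext-groups appear on $\mf g$ that would die upon restriction; this is where the semisimplicity of $K$ (purity) and the cone structure should be combined. A secondary technical point, already present in the first isomorphism, is handling the twist: when $W_{q\cE}$ acts through a nontrivial $2$-cocycle, the endomorphism algebra is a twisted version, and one must track this cocycle through the geometric construction (it arises from the $M$-equivariant structure on $q\cE$ not extending to all of its stabilizer), matching it with the twist in the definition of $\mh H(G,M,q\cE)$ from \cite[\S 4]{AMS2}.
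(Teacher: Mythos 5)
Your first arrow follows essentially the paper's route: the paper (Theorem \ref{thm:1.2}) also identifies $\End^*_{\mc D^b_{G \times \C^\times}(\mf g)}(K)$ with the $G\times\C^\times$-equivariant homology of the fibre product $\ddot{\mf g}=\dot{\mf g}\times_{\mf g}\dot{\mf g}$ with coefficients in $\ddot{q\cE}$, and then invokes Lusztig's identification of this homology, via the actions $\Delta,\Delta'$, with the biregular representation of $\mh H (G,M,q\cE)$, extended to disconnected $G$ as in \cite{AMS2}. So that half is fine, modulo the fact that matching the multiplication with the cross relations is exactly the hard content of \cite{Lus-Cusp1,Lus-Cusp2} that you would be re-deriving.

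The second arrow is where there is a genuine gap. The contraction you invoke does not exist: the scaling action contracts $\mf g$ to the origin, not onto $\mf g_N$, so ``$\mf g$ is a $\C^\times$-cone with vertex set $\mf g_N$'' is false, and there is no equivariant deformation retraction of $\mf g$ onto the nilpotent cone (note also that $K$ is supported on $\overline{\mf g_{RS}}$, which is much larger than $\mf g_N$, so restriction genuinely discards information at the level of complexes). Your fallback, that the inclusion $\ddot{\mf g}_N\hookrightarrow\ddot{\mf g}$ is a homotopy equivalence, is neither established nor of the right type: the relevant invariant is equivariant Borel--Moore homology with coefficients in $\ddot{q\cE}$, which is not a homotopy invariant, and the pieces of $\ddot{\mf g}$ and $\ddot{\mf g}_N$ indexed by $w\in W_{q\cE}$ differ by affine ($\mf t$-direction) factors glued in different ways. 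What actually makes the restriction map an isomorphism in the paper is Lusztig's computation (\cite[Proposition 4.7, Corollary 6.4]{Lus-Cusp1}, \cite[Theorem 8.11]{Lus-Cusp2}, generalized in \cite{AMS2}): both $H_*^{G\times\C^\times}(\ddot{\mf g},\ddot{q\cE})$ and $H_*^{G\times\C^\times}(\ddot{\mf g_N},\ddot{q\cE})$ are free $\mc O(\mf t)\otimes\C[\mb r]$-modules whose degree-zero parts are identified compatibly with the inclusion, and both carry the $\Delta,\Delta'$-actions identifying them with the biregular representation of $\mh H(G,M,q\cE)$; the second isomorphism is then functoriality of the inclusion $\mf g_N\to\mf g$ through these identifications. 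To repair your argument you would need to prove such a freeness/degree-zero comparison (or reproduce Lusztig's spectral-sequence argument), rather than appeal to a contraction or homotopy equivalence. Your closing remarks about tracking the $2$-cocycle $\natural_{q\cE}$ are on target and correspond to \eqref{eq:1.10}--\eqref{eq:3.60} in the paper.
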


We point out that here the additional $\C^\times$-action makes things much more interesting 
(like in \cite{KaLu} for K-theory and affine Hecke algebras). Indeed, the simpler version
$\End^*_{\mc D^b_G (\mf g)}(K)$ is isomorphic to the crossed product of $\mc O (\mf t)$ with a 
twisted group algebra of $W_{q\cE}$, and that does not involve any Hecke type relations.

Let $\mc D^b_{G \times \C^\times}(\mf g_N ,K_N)$ be the full triangulated subcategory of
$\mc D^b_{G \times \C^\times}(\mf g_N)$ generated by $K_N$. By analogy with progenerators of
module categories, Theorem \ref{thm:A} indicates that $\mc D^b_{G \times \C^\times}(\mf g_N ,K_N)$ 
should be equivalent to some category of right $\mh H (G,M,q\cE)$-modules. Our geometric objects
are differential complexes of sheaves (up to equivalences), and accordingly we need 
(equivalence classes of complexes of) differential graded $\mh H (G,M,q\cE)$-modules.

\begin{thmintro}\label{thm:B}
(see Theorem \ref{thm:3.3}) \\
There exists an equivalence of categories between 
$\mc D^b_{G \times \C^\times}(\mf g_N ,K_N)$ and\\ $\mc D^b (\mh H (G,M,q\cE)-\Mod_{\mr{fgdg}})$, the 
bounded derived category of finitely generated differential graded right $\mh H (G,M,q\cE)$-modules.
\end{thmintro}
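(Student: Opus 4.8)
The plan is to build a functor from $\mc D^b_{G \times \C^\times}(\mf g_N, K_N)$ to $\mc D^b(\mh H(G,M,q\cE)\text{-}\Mod_{\mr{fgdg}})$ by sending an object $\mc F$ to the differential graded $\mh H$-module $\mr{RHom}^*_{\mc D^b_{G \times \C^\times}(\mf g_N)}(K_N, \mc F)$, where the $\mh H$-module structure comes from precomposition via the isomorphism $\mh H(G,M,q\cE) \cong \End^*_{\mc D^b_{G \times \C^\times}(\mf g_N)}(K_N)$ of Theorem \ref{thm:A}. To make this into an honest dg-module rather than one defined only up to quasi-isomorphism, I would first replace $K_N$ by a suitable representative (for instance using injective or $K$-injective resolutions in an underlying dg-enhancement of the Bernstein--Lunts equivariant derived category) so that $\mr{End}$ becomes a genuine dg-algebra $A$ with $H^*(A) \cong \End^*(K_N)$; in fact one wants $A$ itself to be formal, i.e.\ quasi-isomorphic to $\mh H(G,M,q\cE)$ viewed as a dg-algebra with zero differential, so that dg-modules over $A$ and over $\mh H$ have equivalent derived categories. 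The existence of such a dg-enhancement for $\mc D^b_{G \times \C^\times}(\mf g_N)$ is standard, and formality of $A$ should follow from the grading: $\mh H$ carries an internal grading (by polynomial degree and the degree of $\mb r$) that is respected by the cohomological Ext-grading, and a standard purity/weight argument — the relevant perverse sheaves summing to $K_N$ are pointwise pure, so their Ext-algebra is formal — gives the collapse of any higher $A_\infty$ or Massey products.

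The second main step is to prove that this functor is an equivalence. The general mechanism is the dg-module version of the classical tilting/Morita statement: if $\mc T$ is a triangulated category with a dg-enhancement, $P$ a compact object, and $\mc T$ is generated by $P$, then $\mr{RHom}(P, -)$ induces an equivalence between $\mc T$ (restricted to the subcategory generated by $P$) and the derived category of dg-modules over $\mr{REnd}(P)$, provided one is on the ``right'' side of finiteness. Here $K_N$ is by construction a compact generator of $\mc D^b_{G \times \C^\times}(\mf g_N, K_N)$, so the abstract statement applies once the dg-enhancement is in place. The finiteness bookkeeping — matching the bounded-derived-category of \emph{finitely generated} dg-modules with the compact objects of the full module category, and checking that $\mc D^b_{G \times \C^\times}(\mf g_N, K_N)$ is exactly the thick subcategory of compacts — is where I would be careful: one needs that $\End^*(K_N)$ is a Noetherian graded ring (which follows from the explicit tensor-product description of $\mh H$: it is finite over the polynomial center $\C[\mf t]^{W_{q\cE}} \otimes \C[\mb r]$) so that ``finitely generated'' is a well-behaved condition, and that perfect complexes over $\mh H$ are enough to reach all of the subcategory generated by $K_N$.

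I expect the main obstacle to be establishing the formality of the dg-endomorphism algebra $A$ of $K_N$ — equivalently, showing that all higher products in the $A_\infty$-structure on $\End^*(K_N)$ vanish. Theorem \ref{thm:A} only identifies the \emph{cohomology} $H^*(A)$ with $\mh H(G,M,q\cE)$ as a graded algebra; upgrading this to an identification of $A$ with $\mh H$ as dg-algebras requires genuine input. The cleanest route is a weight argument in the mixed (equivariant) derived category: since $K_N$ is, up to shift, a direct sum of simple perverse sheaves obtained by (generalized Springer) induction from a cuspidal local system, these are pure of weight $0$, hence their $\mr{Ext}$'s between shifts are pure, and purity forces formality (this is the Deligne--Gabber--style argument, cf.\ the use of purity in \cite{Lus-IC}). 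One subtlety is that we work $G \times \C^\times$-equivariantly and on the Lie algebra rather than on a proper variety, so I would need to either invoke a Frobenius/weight structure on $\mf g_N$ directly or transfer the purity statement through the map relating $K_N$ to $K$ on $\mf g$ from Theorem \ref{thm:A}; either way the extra $\C^\times$ contributes the variable $\mb r$ in the correct degree $2$ and does not disturb purity. Once formality is secured, the rest is the formal machinery of dg-enhancements and the generation statement, which go through by the cited general results on derived categories of dg-modules (Keller, and in the equivariant-sheaf setting, the Bernstein--Lunts formalism of \cite{BeLu}).
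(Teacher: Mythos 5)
Your overall architecture --- a $\mathrm{Hom}$-type functor out of $K_N$, formality of its dg endomorphism algebra, and then a dg Morita/tilting argument with finiteness bookkeeping --- is in spirit the same as the paper's, which indeed presents Theorem \ref{thm:B} as formality of $\mh H (G,M,q\cE)$ realized through an auxiliary dg algebra $\mc R$. The genuine gap is at the step you yourself flag as the main obstacle: formality. Over $\C$, in the $G \times \C^\times$-equivariant setting and on the non-proper variety $\mf g_N$, there is no Frobenius or weight structure available ``directly'', and your fallback --- transferring purity through the relation between $K_N$ and $K$ on $\mf g$ from Theorem \ref{thm:A} --- does not supply one either: that comparison changes the variety, not the sheaf-theoretic formalism in which weights live, so it cannot convert a graded-algebra identification of $H^*(A)$ with $\mh H$ into a dg (or $A_\infty$) statement. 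Filling this hole is exactly the bulk of the paper's proof: everything ($G$, $P$, $q\cE$, $K_N$) is re-defined over a finite extension of $\Z$ and base-changed to an algebraically closed field of good positive characteristic, where the mixed machinery of Rider and Rider--Russell applies. One must check that Frobenius acts trivially on the cuspidal local system, recompute $\End^*_{\mc D^b_{G \times GL_1}(\mf g_{N,s})}(K_{N,s}) \cong \mh H_{\Ql}$ with its Frobenius action, prove Koszulity of $\mh H_{\Ql}$ and finite-dimensionality of its Koszul dual (Lemma \ref{lem:3.1}) in order to produce projective resolutions in the mixed perverse heart, build $\mc R$ and the quasi-isomorphism $\mc R \to \mh H_{\Ql}$, and conclude with Beilinson's lemma. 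A second nontrivial step then transports the equivalence back to $\C$ via Bernstein--Lunts resolutions and the comparison of derived categories over different ground fields from \cite[\S 6.1]{BBD}; this is why Theorem \ref{thm:3.3} carries the hypothesis that $G$ be definable over a finite extension of $\Z$, a hypothesis absent from your sketch.

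Two smaller points. First, the paper deliberately avoids invoking an abstract dg enhancement of $\mc D^b_{G \times \C^\times}(\mf g_N)$ plus a Keller-type theorem: as noted after \eqref{eq:3.3}, this category is not an honest bounded derived category and $\Hom^*$ is not its Hom functor, so the functor is instead constructed by hand from the complex $P_s^*$, the algebra $\mc R$, and $\otimes^L_{\mc R} \mh H_{\Ql}$, with the needed enhancement-level input supplied by Schn\"urer's results rather than asserted as standard. Second, for the finiteness bookkeeping the relevant input is not Noetherianity of $\mh H$ but Koszulity (together with \cite[Corollary 11.1.5]{BeLu}, which says $\mh H$ generates $\mc D^b (\mh H -\Mod_{\mr{fgdg}})$); Noetherian alone would not give you that bounded finitely generated dg modules are reached by the subcategory generated by $K_N$.
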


This is a geometric categorification of $\mh H (G,M,q\cE)$, albeit of a different kind than in
\cite{KaLu,Bez}. It is a variation (with $G \times \C^\times$ instead of $G^\circ$) on the derived
version of the generalized Springer correspondence from \cite{Rid,RiRu}. In that setting, the algebra
is $\mc O (\mf t) \rtimes W(G,T)$, which can also be considered as a graded Hecke algebra with 
parameters $k = 0$.
Further, one may regard Theorem \ref{thm:B} as ``formality" of the graded algebra $\mh H (G,M,q\cE)$,
in the following sense. There exists a differential graded algebra $\mc R$ (with nonzero differential)
such that $H^* (\mc R) \cong \mh H (G,M,q\cE)$ and $\mc R$ is formal, that is, quasi-isomorphic with
$H^* (\mc R)$. The equivalence in Theorem \ref{thm:B} maps $\mc D^b_{G \times \C^\times}(\mf g_N ,K_N)$
to $\mc D^b (\mc R -\Mod_{\mr{fgdg}})$ via some Hom-functor, and from there to 
$\mc D^b (\mh H (G,M,q\cE)-\Mod_{\mr{fgdg}})$ by taking cohomology.

From a geometric point of view, it is more natural to consider the entire category 
$\mc D^b_{G \times \C^\times}(\mf g_N)$ in Theorem \ref{thm:B}. It turns out that this category
decomposes, like in a related setting in \cite{RiRu1}:

\begin{thmintro}\label{thm:C}
(see Theorem \ref{thm:3.6}) \\
There exists an orthogonal decomposition 
\[
\mc D^b_{G \times \C^\times}(\mf g_N) = 
\bigoplus\nolimits_{[M,q\cE]_G} \mc D^b_{G \times \C^\times}(\mf g_N ,K_N) .
\vspace{-1mm}
\]
Here $K_N$ is constructed from an $M$-equivariant cuspidal local system $q\cE$ on a nilpotent 
orbit in a Levi Lie-subalgebra $\mr{Lie} (M)$, and the direct sum runs over $G$-conjugacy classes 
of such pairs $(\mr{Lie} (M),q\cE)$.
\end{thmintro}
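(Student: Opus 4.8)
The plan is to establish the decomposition by combining the generalized Springer correspondence for the group $G\times\C^\times$ with a semiorthogonality argument. First I would recall that the simple objects of the heart of $\mc D^b_{G \times \C^\times}(\mf g_N)$ (equivalently, the $G\times\C^\times$-equivariant simple perverse sheaves on $\mf g_N$, which coincide with the $G$-equivariant ones since $\C^\times$ acts trivially on $\mf g_N$ up to the grading shift) are, by Lusztig's generalized Springer correspondence \cite{Lus-Int}, indexed precisely by pairs $(\cO, \mc F)$ with $\cO$ a nilpotent $G$-orbit and $\mc F$ an irreducible $G$-equivariant local system on $\cO$, and that this index set is partitioned according to the cuspidal support: each such pair is assigned a unique $G$-conjugacy class $[M,q\cE]_G$ of a Levi $M$ and a cuspidal local system $q\cE$ on a nilpotent $M$-orbit. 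The key input is then that the summand $\mc D^b_{G \times \C^\times}(\mf g_N ,K_N)$ attached to $[M,q\cE]_G$ is exactly the triangulated subcategory generated by those simple perverse sheaves whose cuspidal support is $[M,q\cE]_G$: this follows because, up to shifts, $K_N$ is the direct sum of precisely those simple perverse sheaves (this is how $K_N$ is built, and it is the statement that the generalized Springer correspondence is realized by $K_N$, as already used for Theorem \ref{thm:A}).

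Second, I would prove orthogonality between the summands. Suppose $[M,q\cE]_G \neq [M',q\cE']_G$, with associated complexes $K_N$ and $K'_N$; I claim $\Hom^*_{\mc D^b_{G \times \C^\times}(\mf g_N)}(K_N, K'_N) = 0$ and likewise in the other direction. Since both objects are, up to shifts, sums of simple perverse sheaves, and $\mc D^b_{G \times \C^\times}(\mf g_N)$ carries a weight/purity structure (the equivariant derived category on the nilpotent cone is of ``pure'' or even Koszul type, and all $\IC$-sheaves here are pointwise pure), the graded $\Ext$-spaces between two simple perverse sheaves $A$ and $B$ are governed by the geometry of the strata; in particular, after the standard reduction one uses that there are no extensions in degree $0$ (distinct simples) and that the higher $\Ext$'s between simples of \emph{different} cuspidal support vanish. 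The cleanest way to see the vanishing is via the decomposition of the equivariant cohomology of $\mf g_N$-type spaces into blocks indexed by cuspidal data, i.e.\ the fact — parallel to \cite[\S 4]{RiRu1} and to the Morita-theoretic picture underlying Theorem \ref{thm:A} — that the full endomorphism dg-algebra $\End^*(K_N \oplus K'_N \oplus \cdots)$ of the sum over all cuspidal data decomposes as a direct product over $[M,q\cE]_G$, with no morphisms between distinct factors. This product decomposition of the total $\Hom$-algebra is precisely the algebraic shadow of Theorem \ref{thm:C}.

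Third, with orthogonality in hand, I would assemble the decomposition: the subcategories $\mc D^b_{G \times \C^\times}(\mf g_N ,K_N)$ for varying $[M,q\cE]_G$ are pairwise orthogonal triangulated subcategories, and together they generate all of $\mc D^b_{G \times \C^\times}(\mf g_N)$ because every simple perverse sheaf on $\mf g_N$ lies in exactly one of them (by the generalized Springer classification) and the simple perverse sheaves generate the whole bounded equivariant derived category. An orthogonal family of generating subcategories yields an orthogonal direct sum decomposition of the ambient triangulated category, which is the assertion. One should check that finiteness of the index set (finitely many $G$-conjugacy classes of pairs $(M,q\cE)$, as $M$ ranges over finitely many conjugacy classes of Levis and each carries finitely many cuspidal local systems) makes the direct sum well-behaved, but this is standard.

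The main obstacle I expect is the orthogonality step, specifically the vanishing of \emph{higher} $\Ext$-groups between simple perverse sheaves of different cuspidal support. Disjointness of supports is false in general — different series can have $\IC$-sheaves living on the same orbit closures — so one cannot argue by support considerations alone; one genuinely needs the block decomposition of the relevant (equivariant) cohomology algebras, i.e.\ that the generalized Springer correspondence is ``orthogonal'' at the derived level and not merely at the level of the perverse heart. Establishing this is where the weight structure, the explicit description of $K$ from \cite{Lus-Cusp1,Lus-Cusp2,AMS2}, and the endomorphism-algebra computation of Theorem \ref{thm:A} (now applied simultaneously to all cuspidal data) must be brought together; it is plausible that one reduces, via the equivalence of Theorem \ref{thm:B} applied summand by summand, to the purely algebraic statement that the direct sum $\bigoplus_{[M,q\cE]_G}\mh H(G,M,q\cE)$ has the orthogonality built in by construction, at which point the geometric decomposition follows formally.
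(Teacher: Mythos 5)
Your outline (each summand is the triangulated subcategory generated by the simple perverse sheaves of one cuspidal quasi-support, these subcategories generate, and the content is cross-series $\Hom$-vanishing) is the right skeleton and is indeed the skeleton of the argument the paper borrows from Rider--Russell. But the proposal has a genuine gap exactly at the point you flag as the ``main obstacle'': the vanishing of $\Hom^*_{\mc D^b_{G \times \C^\times}(\mf g_N)}(K_N, K'_N)$ for distinct classes $[M,q\cE]_G \neq [M',q\cE']_G$ is never actually proved. The two routes you sketch do not close it. Saying that the total endomorphism dg-algebra of $\bigoplus K_N$ ``decomposes as a direct product with no morphisms between distinct factors'' is just a restatement of the orthogonality to be proved, and the suggested reduction via Theorem \ref{thm:B} is circular as well: Theorem \ref{thm:B} describes each subcategory $\mc D^b_{G \times \C^\times}(\mf g_N, K_N)$ \emph{in isolation} and carries no information about morphisms in the ambient category between objects lying in different subcategories, so no amount of ``orthogonality built into $\bigoplus_{[M,q\cE]_G} \mh H(G,M,q\cE)$ by construction'' can be transported back. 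Likewise, purity/weight considerations control the degrees in which $\Ext$'s can live but do not by themselves kill $\Ext$'s between simples of different cuspidal support (whose orbit closures can meet, as you note).

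What the paper actually does (Theorem \ref{thm:3.6}) is to translate \cite[Theorem 3.5]{RiRu1}: the arguments of \cite[\S 2--3]{RiRu1}, which propagate orthogonality from the Levi to $G$ via the adjunction between the parabolic induction and restriction functors $\mc I_{P \times \C^\times}^{G \times \C^\times}$ and $\mc R_{P \times \C^\times}^{G \times \C^\times}$, go through verbatim with $G \times \C^\times$ and disconnected $G$ -- \emph{except} for one step, the central-character argument near the end of the proof of \cite[Theorem 3.5]{RiRu1}, which fails for quasi-Levi subgroups. The new ingredient replacing it is Lemma \ref{lem:3.4}: for two non-isomorphic cuspidal pairs on the same $M$, one proves $\Hom^*_{\mc D^b_{M \times \C^\times}(\mf m_N)}\big(\IC(\mf m_N, q\cE), \IC(\mf m_N, q\cE')\big) = 0$ by using cleanness of cuspidal local systems, $j_{\mf m_N,!}\, q\cE = \IC(\mf m_N, q\cE) = j_{\mf m_N,*}\, q\cE$, together with adjunction to reduce the computation to the orbit $\cC_v^M$ and finally to $\Hom_{\pi_0(Z_{M \times \C^\times}(v))}(\rho,\rho') = 0$ for non-isomorphic irreducible representations $\rho, \rho'$ of the component group. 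This Levi-level vanishing, fed into the Rider--Russell induction/restriction machinery, is what yields the orthogonality at the level of $G$; your proposal is missing both this reduction-to-the-Levi step and the cleanness argument that makes it work.
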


\textbf{Outlook}\\
Theorems \ref{thm:B} and \ref{thm:C} describe $\mc D^b_{G \times \C^\times}(\mf g_N)$ 
as a derived module category.
Let us point out that the category $\mh H (G,M,q\cE)-\Mod_{\mr{fgdg}}$ in Theorem \ref{thm:B} is
much smaller than the category of ungraded finitely generated right $\mh H (G,M,q\cE)$-modules
(which relates to categories of smooth representations of reductive $p$-adic groups).
In the sequels to this paper \cite{SolSheaves,SolStand} we focus on standard and irreducible 
$\mh H (G,M,q\cE)$-modules, so modules with a fixed central character. Those will be analysed 
using versions of localization for equivariant derived constructible sheaves. In the end, this
will be used to verify the Kazhdan--Lusztig conjecture for $p$-adic groups 
\cite[Conjecture 8.11]{Vog} and the generalized injectivity conjecture \cite{CaSh}. To prove
these conjectures entirely, it is essential to work in the generality of the current paper. 

In words, Theorem \ref{thm:C} says that the $G \times \C^\times$-equivariant derived category of
constructible sheaves on the nilpotent cone $\mf g_N$ decomposes as a direct sum of subcategories
associated to the various involved cuspidal supports $(M,q\cE)$. Let us speculate on how this 
relates to the Langlands program. An element $N \in \mf g_N$ and a semisimple element $g \in G$
with Ad$(g)N = q_F N$ can be used to define a Langlands parameter for a reductive group over a
non-archimedean local field $F$. It may be an unramified L-parameter like in \cite{KaLu}: trivial 
on the inertia group $\mb I_F$ and with $g$ the image of an arithmetic Frobenius element $\Fr$. 
But one can also start with a more complicated L-parameter $\phi$, let $G^\circ$ be the connected 
centralizer of $\phi (\mb I_F)$ in the complex dual group and let $G/G^\circ$ be generated by 
$\phi (\Fr)$.

One may hope that an analogue of Theorem \ref{thm:C} holds for equivariant sheaves on varieties
(or stacks) of Langlands parameters, as defined in \cite{DHKM,Zhu}. It would say that the relevant
sheaves on Langlands parameters can be decomposed as orthogonal direct sums of pieces associated
to suitable cuspidal supports. That would be similar to the Bernstein decomposition of the
category of the smooth complex representations of a reductive $p$-adic group. A result of
this kind is already known for enhanced L-parameters \cite[\S 8]{AMS1}, that covers the cases
of simple equivariant constructible sheaves. To fit with the (conjectural) framework for
geometrization of the local Langlands correspondence from \cite{FaSc,Zhu}, a version of Theorem
\ref{thm:C} with equivariant coherent sheaves on varieties of L-parameters is desirable.\\

\textbf{Structure of the paper}\\
We start with recalling twisted graded Hecke algebras in terms of generators and relations.
We generalize a few results from \cite{SolHecke}, which say that the set of irreducible 
representations of a graded Hecke algebra is essentially independent of the parameters $k$ and $r$. 
Then we prove a generally useful result:

\begin{thmintro}\label{thm:D}
The global dimension of $\mh H (G,M,q\cE)$ equals $\dim (Z(M^\circ)) + 1$.
\end{thmintro}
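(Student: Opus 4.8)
The plan is to compute the global dimension by combining the structure of $\mh H (G,M,q\cE)$ as a module over its polynomial part with standard homological machinery. Write $\mh H = \mh H (G,M,q\cE)$, $\mathcal P = \mathcal O(\mf t) \otimes \C[\mb r]$ for the (commutative, central up to the $W_{q\cE}$-action) polynomial subalgebra, and $d = \dim Z(M^\circ) = \dim \mf t$, so that $\mathcal P$ is a polynomial ring in $d+1$ variables and hence has global dimension $d+1$. As a graded vector space $\mh H \cong \mathcal P \otimes \C[W_{q\cE}]$, so $\mh H$ is free of finite rank $|W_{q\cE}|$ as a left (and right) $\mathcal P$-module; in particular $\mathcal P \hookrightarrow \mh H$ is a (graded) Frobenius-type extension in the sense needed below.

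First I would establish the upper bound $\mathrm{gl.dim}\,\mh H \le d+1$. The key step is that induction $\mathrm{Ind}_{\mathcal P}^{\mh H}(-) = \mh H \otimes_{\mathcal P} (-)$ sends projective $\mathcal P$-modules to projective $\mh H$-modules and is exact (since $\mh H$ is $\mathcal P$-free), so any $\mh H$-module $N$, viewed as a $\mathcal P$-module, has a length-$(d+1)$ projective $\mathcal P$-resolution, and applying $\mh H \otimes_{\mathcal P} -$ to it gives a length-$(d+1)$ complex of projective $\mh H$-modules. It remains to check this complex is a resolution of $N$ itself, not merely of $\mathrm{Ind}_{\mathcal P}^{\mh H}\mathrm{Res}_{\mathcal P} N$; for this I would use that the multiplication map $\mh H \otimes_{\mathcal P} N \to N$ is a split surjection of $\mh H$-modules, with splitting built from a dual basis for the Frobenius extension $\mathcal P \subset \mh H$ (the group-like basis $\{t_w : w \in W_{q\cE}\}$ together with the pairing $(x,y) \mapsto$ coefficient of $t_e$ in $xy$), so that $N$ is a direct summand of $\mathrm{Ind}_{\mathcal P}^{\mh H}\mathrm{Res}_{\mathcal P} N$ and thus has projective dimension $\le d+1$. (Equivalently one invokes the standard fact that for a Frobenius extension $\mathcal P \subset \mh H$ one has $\mathrm{pd}_{\mh H} N \le \mathrm{pd}_{\mathcal P} N$.)

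For the lower bound $\mathrm{gl.dim}\,\mh H \ge d+1$ I would exhibit a single module of projective dimension exactly $d+1$. The natural candidate is the "trivial-type" module $\C_{0}$ on which $\mf t$ and $\mb r$ act by $0$ and $W_{q\cE}$ acts by (a chosen character of) the twisted group algebra — or, even more robustly, a generic one-dimensional module supported at a central character where $W_{q\cE}$ acts freely, so that localizing at that point reduces the computation to a polynomial ring in $d+1$ variables. Concretely, I would show $\mathrm{Ext}^{d+1}_{\mh H}(\C_0, \C_0) \ne 0$: using the Frobenius/duality structure, $\mathrm{Ext}^{\bullet}_{\mh H}(\C_0, \mh H)$ can be related to $\mathrm{Ext}^{\bullet}_{\mathcal P}(\C_0, \mathcal P) = \mathrm{Ext}^{d+1}_{\mathcal P}(\C_0,\mathcal P) \cong \C_0$ (up to a degree shift by $d+1$), which is nonzero in top degree, and then a change-of-rings/universal-coefficients argument transfers this to a nonvanishing $\mathrm{Ext}^{d+1}_{\mh H}(\C_0,\C_0)$. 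Combined with the upper bound this pins the global dimension at $d+1$.

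The main obstacle is the twist: $\mh H$ is a \emph{twisted} graded Hecke algebra, so $W_{q\cE}$ acts on $\mathcal P$ through a possibly nontrivial action and the crossed-product is deformed by a $2$-cocycle, which means $\mathcal P$ need not be central and the ``Frobenius extension'' claim must be verified carefully rather than quoted for an honest group algebra. I expect this to be manageable: the relevant point is only that $\mh H$ is free of finite rank over $\mathcal P$ on both sides with a symmetrizing form, which holds because the defining PBW-type basis $\{t_w\}$ is simultaneously a left and right $\mathcal P$-basis and the pairing above is nondegenerate; the cocycle only permutes basis vectors up to scalars and does not affect freeness or nondegeneracy. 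A secondary technical point is keeping track of the internal grading (the doubled grading on $\mathcal O(\mf t)$ and degree $2$ on $\mb r$) so that all resolutions are graded and the $\mathrm{Ext}$-computations are graded; since the grading is non-negative and $\mathh H$ is finitely generated in each degree, no convergence issues arise and the graded global dimension agrees with the ungraded one.
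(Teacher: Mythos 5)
Your upper bound is where the argument breaks down, and the failure is exactly the one the paper flags after Lemma \ref{lem:1.11}. You claim that the multiplication map $\mh H \otimes_{\mathcal P} N \to N$ (with $\mathcal P = \mc O (\mf t \oplus \C)$) splits via the dual basis $\{N_w\}$, so that every $\mh H$-module is a direct summand of $\mr{ind}_{\mathcal P}^{\mh H}\Res_{\mathcal P} N$ and hence $\mr{pd}_{\mh H}N \le \mr{pd}_{\mathcal P}N \le \dim_\C (\mf t)+1$. But when the parameters $k$ are nonzero the elements $N_w$ do not normalize $\mathcal P$ (the cross relation $N_{s_\alpha}\xi - {}^{s_\alpha}\xi N_{s_\alpha} = k(\alpha)\mb r (\xi - {}^{s_\alpha}\xi)/\alpha$ has a correction term), so the map $v \mapsto \sum_w N_w^{-1}\otimes N_w v$ is not an $\mh H$-module map; this is precisely the ``complication'' the paper points out, and it is not a repairable technicality. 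No splitting exists at all in general: already for $W = S_2$, $\dim_\C \mf t = 1$, $k \neq 0$, take $N$ one-dimensional with $\mb r \mapsto 1$, $N_s \mapsto 1$ and $\mc O(\mf t)$ acting by the special weight at which the principal series degenerates; then $\mh H \otimes_{\mathcal P}\Res_{\mathcal P} N$ is the two-dimensional principal series at that point, which is indecomposable with $N$ as a quotient but not as a direct summand. Moreover the general principle you fall back on is false: a Frobenius extension $B \subset A$ does not give $\mr{gl.\,dim}(A) \le \mr{gl.\,dim}(B)$ (consider $\C \subset \C[x]/(x^2)$, or a modular group algebra over its ground field); what would suffice is separability of $\mathcal P \subset \mh H$, i.e. a bimodule splitting of multiplication, and the rank-one example shows that fails for $k \neq 0$. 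Note also that the obstacle is not the twist by $\Gamma$ and the cocycle $\natural$ -- that part is genuinely a crossed product and is handled by the averaging argument of Lemma \ref{lem:1.11} -- but the nonzero $k$ on the Coxeter part $W$. Your lower bound sketch is closer to something correct, but as stated it is off: at a regular central character there are no one-dimensional $\mh H$-modules; the paper's Lemma \ref{lem:1.13} instead takes $\mr{ind}_{\mathcal P}^{\mh H}\C_{\lambda,r}$ for regular $\lambda$ and computes $\mr{Ext}^*$ by Frobenius reciprocity, using that the restriction of this induced module decomposes as $\bigoplus_w \C_{w\lambda,r}$.

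For comparison, the paper's proof of the upper bound is necessarily more roundabout: after reducing to $\Gamma = \{1\}$ (Lemma \ref{lem:1.11}) it passes to the completions $\hat{\mh H}_r$ of $\mh H(\mf t,W,k,\mb r)$ at $(\mb r - r)$ (Lemma \ref{lem:1.16}), and there it uses the previously established result $\mr{gl.\,dim}\,\mh H (\mf t,W,rk) = \dim_\C \mf t$ from \cite{SolGHA}, a change-of-rings theorem for the quotient by $(\mb r - r)$, and a Mittag--Leffler $\varprojlim^1$ argument to bound Tor in degrees above $\dim_\C(\mf t)+1$. So the genuine content of the upper bound sits in inputs your proposal does not engage with, and the step ``$\mr{pd}_{\mh H}N \le \mr{pd}_{\mathcal P}N$'' remains unjustified as you propose to prove it.
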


In Paragraph \ref{par:geomConst} we describe the geometric construction of $\mh H (G,M,q\cE)$ in
detail, and we establish Theorem \ref{thm:A}. Next we check that $K_N$ is a semisimple object
of $\mc D^b_{G \times \C^\times}(\mf g_N)$ and we relate it to parabolic induction for perverse
sheaves -- which is needed for Theorems \ref{thm:B} and \ref{thm:C}. Paragraph \ref{par:centralizer}
is mainly preparation for an argument with localization to $\exp (\C \sigma)$-invariants. We include 
it here because it is closely related to Paragraph \ref{par:geomConst} and because our analysis of 
$(G/P)^\sigma = (G/P)^{\exp (\C \sigma)}$ for $\sigma \in \mf t$ is of independent interest.
Paragraph \ref{par:iso} and Section \ref{sec:cuspidal} (basically the complement of Theorems
\ref{thm:B}, \ref{thm:C} and \ref{thm:D}) will be used directly in \cite{SolSheaves,SolStand}.

Section \ref{sec:description} is dedicated to Theorems \ref{thm:B} and \ref{thm:C}. We prove them by
reduction to the setting of \cite{Rid,RiRu1,RiRu}, where sheaves of $\Ql$-modules on varieties over
fields of positive characteristic are considered. This involves checking many things, among others
that $\mh H (G,M,q\cE)$ is Koszul as differential graded algebra.\\

\textbf{Acknowledgements}\\
We thank Eugen Hellmann for some enlightening conversations.
A big thanks to the referees for their detailed remarks, which helped to avoid several problems
and to substantially clarify the paper.
\vspace{3mm}

\renewcommand{\theequation}{\arabic{section}.\arabic{equation}}
\counterwithin*{equation}{section}

\section{Graded Hecke algebras} 
\label{sec:defGHA}

Let $\mf a$ be a finite dimensional Euclidean space and let $W$ be a finite Coxeter group
acting isometrically on $\mf a$, and hence also on the linear dual space $\mf a^\vee$. 
Let $R \subset \mf a^\vee$ be a reduced integral root system, stable under the action of $W$, 
such that the reflections $s_\alpha$ with $\alpha \in R$ generate $W$. These conditions imply 
that $W$ acts trivially on the orthogonal complement of $\R R$ in $\mf a^\vee$.

Write $\mf t = \mf a \otimes_\R \C$ and let $S(\mf t^\vee) = \mc O (\mf t)$ be the algebra
of polynomial functions on $\mf t$. We also fix a base $\Delta$ of $R$.
Let $\Gamma$ be a finite group which acts faithfully and orthogonally on $\mf a$ and stabilizes 
$R$ and $\Delta$. Then $\Gamma$ normalizes $W$ and $W \rtimes \Gamma$ is a group of 
automorphisms of $(\mf a,R)$. We choose a $W \rtimes \Gamma$-invariant parameter function
$k : R \to \C$. Let ${\mb r}$ be a formal variable, identified with the coordinate function 
on $\C$ (so $\mc O (\C) = \C [\mb r]$).

Let $\natural : \Gamma^2 \to \C^\times$ be a 2-cocycle and inflate it to a 2-cocycle of 
$W \rtimes \Gamma$. Recall that the twisted group algebra $\C[W \rtimes \Gamma,\natural]$ 
has a $\C$-basis $\{ N_w : w \in W \rtimes \Gamma \}$ and multiplication rules
\[
N_w \cdot N_{w'} = \natural (w,w') N_{w w'} . 
\]
In particular it contains the group algebra of $W$.

\begin{prop}\label{prop:1.1}
\textup{\cite[Proposition 2.2]{AMS2}} \\
There exists a unique associative algebra structure
on $\C [W \rtimes \Gamma,\natural] \otimes \mc O (\mf t) \otimes \C[{\mb r}]$ such that:
\begin{itemize}
\item the twisted group algebra $\C[W \rtimes \Gamma,\natural]$ is embedded as subalgebra;
\item the algebra $\mc O (\mf t) \otimes \C[{\mb r}]$ of polynomial functions on $\mf t 
\oplus \C$ is embedded as a subalgebra;
\item $\C[{\mb r}]$ is central;
\item the braid relation $N_{s_\alpha} \xi - {}^{s_\alpha}\xi N_{s_\alpha} = 
k (\alpha) {\mb r} (\xi - {}^{s_\alpha} \xi) / \alpha$ holds for all $\xi \in \mc O (\mf t)$ 
and all simple roots $\alpha$;
\item $N_w \xi N_w^{-1} = {}^w \xi$ for all $\xi \in \mc O (\mf t)$ and $w \in \Gamma$. 
\end{itemize}
\end{prop}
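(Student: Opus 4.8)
The plan is to prove uniqueness by a rewriting argument and existence by constructing the untwisted graded Hecke algebra explicitly and then twisting it by $(\Gamma,\natural)$.

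\textbf{Uniqueness.} Given two algebra structures as in the statement, both restrict to the same (prescribed) structure on the subalgebras $\C[W\rtimes\Gamma,\natural]$ and $\mc O(\mf t)\otimes\C[\mb r]$, so it suffices to show that the listed relations force the product of two elements $N_w\xi$ and $N_{w'}\xi'$ with $\xi,\xi'\in\mc O(\mf t)\otimes\C[\mb r]$. Since $\C[\mb r]$ is central I may take $\xi\in\mc O(\mf t)$. Writing $w=u\gamma$ with $u\in W,\ \gamma\in\Gamma$, I have $N_{u\gamma}=N_uN_\gamma$ and $N_u=N_{s_{i_1}}\cdots N_{s_{i_\ell}}$ for a reduced word, because $\natural$ is inflated from $\Gamma$ and hence restricts trivially to $W$ and satisfies $\natural(u,\gamma)=1$. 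Now $N_\gamma\xi={}^\gamma\xi\,N_\gamma$, while the braid relation rewrites $N_{s_\alpha}\eta$ as ${}^{s_\alpha}\eta\,N_{s_\alpha}+k(\alpha)\mb r(\eta-{}^{s_\alpha}\eta)/\alpha$ with $(\eta-{}^{s_\alpha}\eta)/\alpha\in\mc O(\mf t)$, since $\eta-{}^{s_\alpha}\eta$ vanishes on $\ker\alpha$. Iterating these moves rewrites $N_w\xi$ as a finite $\mc O(\mf t)\otimes\C[\mb r]$-linear combination of the $N_{w''}$, and one more reduction handles $(N_w\xi)(N_{w'}\xi')$. This pins down the multiplication, and at the same time shows that any algebra with these properties is spanned by $\{\xi N_w:\xi\in\mc O(\mf t)\otimes\C[\mb r],\ w\in W\rtimes\Gamma\}$.

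\textbf{Existence.} For $\Gamma=1$ and $\natural$ trivial this is the classical graded Hecke algebra $\mh H$, which I would construct as operators on $\mc O(\mf t)\otimes\C[\mb r]$: let this polynomial algebra act on itself by multiplication (write $M_\xi$ for multiplication by $\xi$) and let each simple reflection act by the Demazure--Lusztig operator $N_{s_\alpha}\colon f\mapsto{}^{s_\alpha}f+k(\alpha)\mb r(f-{}^{s_\alpha}f)/\alpha$. A direct computation shows $N_{s_\alpha}^2=\mr{id}$ and $N_{s_\alpha}M_\xi-M_{{}^{s_\alpha}\xi}N_{s_\alpha}=M_{k(\alpha)\mb r(\xi-{}^{s_\alpha}\xi)/\alpha}$; the substantive point is that these operators satisfy the braid relations of $W$, which I would verify by reduction to the rank-two root systems $A_1\times A_1,\ A_2,\ B_2,\ G_2$ — the one place where $R$ being a root system and the $W$-invariance of $k$ are used (cf.\ \cite{Lus-Cusp1}). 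Faithfulness of this representation — equivalently the PBW isomorphism $\mh H\cong\mc O(\mf t)\otimes\C[\mb r]\otimes\C[W]$ — then follows after extending scalars to $F:=\mr{Frac}(\mc O(\mf t)\otimes\C[\mb r])$: over $F$ the operator $N_{s_\alpha}$ equals $\tfrac{k(\alpha)\mb r}{\alpha}+\bigl(1-\tfrac{k(\alpha)\mb r}{\alpha}\bigr)s_\alpha$ with $1-\tfrac{k(\alpha)\mb r}{\alpha}\in F^\times$, so induction on $\ell(w)$ expresses each $N_w$ as a triangular (for the Bruhat order, with invertible diagonal entry) $F$-combination of the field automorphisms $\{w:w\in W\}$, and the latter are $F$-linearly independent by Artin's lemma, since $W$ acts faithfully on $\mf t$. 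For general $(\Gamma,\natural)$ I would then pass to the crossed product $\mh H\rtimes\C[\Gamma,\natural]$: the group $\Gamma$ acts on $\mh H$ by algebra automorphisms, because it preserves $\Delta$ and fixes $k$, so it permutes the generators $N_{s_\alpha}$ and respects the braid relations; associativity of $\mh H\otimes\C[\Gamma,\natural]$ with the twisted product is precisely the $2$-cocycle identity for $\natural$ together with $\Gamma$ acting on $\mh H$; and a direct check shows that the embedded copy of $\C[W\rtimes\Gamma,\natural]$ (via $N_{w\gamma}\mapsto N_w\otimes N_\gamma$, using $\natural(w\gamma,w'\gamma')=\natural(\gamma,\gamma')$), the embedded copy of $\mc O(\mf t)\otimes\C[\mb r]$, and all five relations come out as required.

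\textbf{Main obstacle.} The only genuinely nontrivial computation is the braid relation for the Demazure--Lusztig operators over a general finite Coxeter group $W$; the involution and commutation identities are immediate, the crossed-product step is formal, and the PBW statement is routine linear algebra over the fraction field. That braid-relation computation is classical, so in practice I would simply cite it, e.g.\ from \cite{Lus-Cusp1,AMS2}.
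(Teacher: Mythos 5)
Your proof is essentially correct, but it is worth noting that the paper itself offers no argument at all: it simply cites \cite[Proposition 2.2]{AMS2}, and the proof there takes Lusztig's untwisted graded Hecke algebra $\mh H(\mf t,W,k,\mb r)$ as known and then forms the twisted crossed product with $\C[\Gamma,\natural]$ --- which is exactly your final step, so on the $\Gamma$-part you and the cited source coincide. Where you genuinely diverge is the existence of the untwisted algebra: you realize it inside $\End_\C\bigl(\mc O(\mf t)\otimes\C[\mb r]\bigr)$ via Demazure--Lusztig type operators, and you get the PBW/faithfulness statement from Bruhat-triangularity over the fraction field together with Dedekind--Artin independence of the automorphisms $w$ (using that $W\rtimes\Gamma$ acts faithfully on $\mf t$), whereas the literature behind \cite{AMS2} obtains the $\Gamma=1$ case from Lusztig's geometric construction with equivariant cohomology. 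Your route buys a purely algebraic, essentially self-contained proof; its one substantive computation is the braid relation for the operators $N_{s_\alpha}$, which you correctly localize to rank-two subsystems and then cite --- acceptable, since that verification is classical, but be aware it is the only place where the hypotheses on $R$ and the $W\rtimes\Gamma$-invariance of $k$ really enter, so a fully self-contained write-up would have to include it. Two small points to tidy up: the identities $N_{u\gamma}=N_uN_\gamma$ and the triviality of $\natural$ on $W\times W$ require the inflated cocycle to be normalized ($\natural(1,1)=1$), which one should arrange (or note can be arranged) at the outset; and in the uniqueness step you should state explicitly that, under the embedding bullets, the basis tensor $N_w\otimes\xi\otimes\mb r^m$ is the product of $N_w$, $\xi$ and $\mb r^m$ in any admissible algebra structure, so that your rewriting of products of such elements really does determine all structure constants.
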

We denote the algebra from Proposition \ref{prop:1.1} by
$\mh H (\mf t, W \rtimes \Gamma,k,\mb{r},\natural)$ and we call it a twisted graded Hecke algebra. 
It is graded by putting $\C[W \rtimes \Gamma,\natural]$ in degree 0 and $\mf t^\vee \setminus \{0\}$ 
and $\mb r$ in degree 2. When $\Gamma$ is trivial, we omit $\natural$ from the notation, and we 
obtain the usual notion of a graded Hecke algebra $\mh H (\mf t, W,k,\mb{r})$. 

Notice that for $k = 0$ Proposition \ref{prop:1.1} yields the crossed product algebra
\begin{equation}\label{eq:1.9}
\mh H (\mf t, W \rtimes \Gamma, 0,\mb{r}, \natural) = \C[\mb{r}] \otimes_\C \mc O (\mf t) \rtimes 
\C [W \rtimes \Gamma, \natural],
\end{equation}
with multiplication rule 
\[
N_w \xi N_w^{-1} = {}^w \xi \qquad w \in W \rtimes \Gamma, \xi \in \mc O (\mf t) .
\]
It is possible to scale all parameters $k(\alpha)$ simultaneously. Namely, scalar 
multiplication with $z \in \C^\times$ defines a bijection $m_z : \mf t^\vee \to \mf t^\vee$,
which clearly extends to an algebra automorphism of $S(\mf t^\vee)$. From Proposition
\ref{prop:1.1} we see that it extends even further, to an algebra isomorphism
\begin{equation}\label{eq:1.16}
m_z : \mh H (\mf t, W \rtimes \Gamma, zk, \mb{r}, \natural) \to \mh H (\mf t,W \rtimes 
\Gamma ,k,\mb{r}, \natural) 
\end{equation}
which is the identity on $\C [W \rtimes \Gamma, \natural] \otimes_\C \C [\mb{r}]$. Notice that for 
$z=0$ the map $m_z$ is well-defined, but no longer bijective. It is the canonical surjection
\[
\mh H (\mf t,W \rtimes \Gamma,0,\mb{r}, \natural) \to 
\C [W \rtimes \Gamma, \natural] \otimes_\C \C [\mb{r}] .
\]
One also encounters versions of $\mh H (\mf t,W \rtimes \Gamma,k,\mb{r},\natural)$ with $\mb{r}$
specialized to a nonzero complex number. In view of \eqref{eq:1.16} it hardly
matters which specialization, so it suffices to look at $\mb r \mapsto 1$. The
resulting algebra $\mh H (\mf t,W \rtimes \Gamma,k, \natural)$ has underlying vector space
$\C [W \rtimes \Gamma, \natural] \otimes_\C \mc O (\mf t)$ and cross relations
\begin{equation}\label{eq:1.17} 
\xi \cdot s_\alpha - s_\alpha \cdot s_\alpha (\xi) = 
k(\alpha) (\xi - s_\alpha (\xi)) / \alpha \qquad \alpha \in \Delta, \xi \in S(\mf t^\vee) .
\end{equation}
Since $\Gamma$ acts faithfully on $(\mf a,\Delta)$, and $W$ acts simply transitively on the
collection of bases of $R$, $W \rtimes \Gamma$ acts faithfully on $\mf a$.
From \eqref{eq:1.17} we see that the centre of $\mh H (\mf t,W \rtimes \Gamma,k, \natural)$ is
\begin{equation}\label{eq:1.18}
Z(\mh H (\mf t,W \rtimes \Gamma,k,\natural)) = S(\mf t^\vee)^{W \rtimes \Gamma} = 
\mc O (\mf t / W \rtimes \Gamma) .
\end{equation}
As a vector space, $\mh H(\mf t,W \rtimes \Gamma,k, \natural)$ is still graded by $\deg (w) = 0$ 
for $w \in W \rtimes \Gamma$ and $\deg (x) = 2$ for $x \in \mf t^\vee \setminus \{0\}$. However, 
it is not a graded algebra any more, because \eqref{eq:1.17} is not homogeneous in the case 
$\xi = \alpha$. Instead, the above grading merely makes $\mh H(\mf t,W \rtimes \Gamma,k, \natural)$
into a filtered algebra. The graded algebra associated to this filtration is obtained by setting 
the right hand side of \eqref{eq:1.17} equal to 0. In other words, the associated graded object 
of $\mh H(\mf t,W \rtimes \Gamma,k, \natural)$ is the crossed product algebra \eqref{eq:1.9}.

Graded Hecke algebras can be decomposed like root systems and reductive Lie algebras.
Let $R_1, \ldots, R_d$ be the irreducible components of $R$. Write
$\mf a_i^\vee = \mr{span}(R_i) \subset \mf a^\vee$, $\mf t_i = \Hom_\R (\mf a_i^\vee,\C)$
and $\mf z = R^\perp \subset \mf t$. Then
\begin{equation}\label{eq:1.7}
\mf t = \mf t_1 \oplus \cdots \oplus \mf t_d \oplus \mf z .
\end{equation}
The inclusions $W(R_i) \to W(R), \mf t_i^\vee \to \mf t^\vee$ and $\mf z^\vee \to \mf t^\vee$
induce an algebra isomorphism
\begin{equation}\label{eq:1.22}
\mh H (\mf t_1, W(R_1), k) \otimes_\C \cdots \otimes_\C \mh H (\mf t_d, W(R_d),k) 
\otimes_\C \mc O (\mf z) \; \longrightarrow \; \mh H (\mf t, W, k) .
\end{equation}
The central subalgebra $\mc O (\mf z) \cong S(\mf z^\vee)$ is of course very simple, so the study of
graded Hecke algebras can be reduced to the case where the root system $R$ is irreducible.

\subsection{Some representation theory} \
\label{par:iso}

We list some isomorphisms of (twisted) graded Hecke algebras that will be useful later on.
For any $z \in \C^\times$, $\mh H (\mf t, W \rtimes \Gamma,k,\mb{r},\natural)$ 
admits a ``scaling by degree" automorphism
\begin{equation}\label{eq:1.11}
x \mapsto z^n x \qquad \text{if } x \in \mh H (\mf t, W \rtimes \Gamma,k,\mb{r},\natural)
\text{ has degree } 2n.
\end{equation}
Extend the sign representation to a character $\sgn$ of $W \rtimes \Gamma$, trivial on $\Gamma$.
That yields the sign involution
\begin{equation}\label{eq:1.6}
\begin{aligned}
%& \IM : \mh H(\mf t,W \rtimes \Gamma,k, \mb r,\natural) \to 
%\mh H(\mf t,W \rtimes \Gamma,k, \mb r,\natural) \\
%& \IM (N_w) = \sgn (w) N_w ,\quad \IM (\mb r) = \mb r ,\quad \IM (\xi) = -\xi , \\
& \sgn : \mh H(\mf t,W \rtimes \Gamma,k, \mb r, \natural) \to 
\mh H(\mf t,W \rtimes \Gamma,k, \mb r, \natural) \\
& \sgn (N_w) = \sgn (w) N_w ,\quad \sgn (\mb r) = -\mb r ,\quad \sgn (\xi) = \xi \qquad
w \in W \rtimes \Gamma, \xi \in \mf t^\vee.
\end{aligned}
\end{equation}
Upon specializing $\mb r = 1$, it induces an algebra isomorphism
\[
%\IM : \mh H(\mf t,W \rtimes \Gamma,k,\natural) \to \mh H(\mf t,W \rtimes \Gamma,k,\natural) ,\quad
\sgn : \mh H(\mf t,W \rtimes \Gamma,k, \natural) \to \mh H(\mf t,W \rtimes \Gamma,-k, \natural) .
\]
More generally, we can pick a sign $\epsilon (s_\alpha)$ for every simple reflection $s_\alpha \in W$,
such that $\epsilon (s_\alpha) = \epsilon (s_\beta)$ if $s_\alpha$ and $s_\beta$ are conjugate
in $W \rtimes \Gamma$. Then $\epsilon$ extends uniquely to a character of $W \rtimes \Gamma$ 
trivial on $\Gamma$ (and every character of $W \rtimes \Gamma$ which is trivial on $\Gamma$ has
this form). Define a new parameter function $\epsilon k$ by 
\[
\epsilon k (\alpha) = \epsilon (s_\alpha) k(\alpha) .
\]
Then there are algebra isomorphisms
\begin{equation}
\begin{array}{llll}
\phi_\epsilon : & \mh H(\mf t,W \rtimes \Gamma,k,\mb r,\natural) & \to & 
\mh H(\mf t,W \rtimes \Gamma,\epsilon k,\mb r,\natural) ,\\ 
\phi_\epsilon : & \mh H(\mf t,W \rtimes \Gamma,k,\natural) & \to & 
\mh H(\mf t,W \rtimes \Gamma,\epsilon k,\natural) ,\\
\multicolumn{4}{l}{\phi_\epsilon (N_w) = \epsilon (w) N_w ,\quad \phi_\epsilon (\mb r) = \mb r ,\quad
\phi_\epsilon (\xi) = \xi, \qquad w \in W \rtimes \Gamma, \xi \in \mc O (\mf t) .}
\end{array}
\end{equation}
Notice that for $\epsilon$ equal to the sign character of $W$, $\phi_\epsilon$ agrees with $\sgn$ 
from \eqref{eq:1.6} on $\mh H(\mf t,W \rtimes \Gamma,k,\natural)$ but not on 
$\mh H(\mf t,W \rtimes \Gamma,k,\mb r,\natural)$.

For $R$ irreducible of type $B_n, C_n, F_4$ or $G_2$, there are two further nontrivial
possible $\epsilon$'s. Consider the characters $\epsilon_s, \epsilon_l$ of $W$ with
\[
\epsilon_s (s_\alpha) = \left\{ \begin{array}{cc}
1 & \alpha \text{ long} \\
-1 & \alpha \text{ short}
\end{array}\right. ,\qquad
\epsilon_l (s_\alpha) = \left\{ \begin{array}{cc}
1 & \alpha \text{ short} \\
-1 & \alpha \text{ long}
\end{array}\right. .
\]
Since $\Gamma$ acts isometrically on $\mf a$, $\epsilon_l$ and $\epsilon_s$ are $\Gamma$-invariant.
Thus we obtain algebra isomorphisms
\[
\phi_{\epsilon_s} : \mh H (\mf t,W \rtimes \Gamma,k,\natural) \to 
\mh H(\mf t,W \rtimes \Gamma,\epsilon_s k,\natural) ,\;
\phi_{\epsilon_l} : \mh H (\mf t,W \rtimes \Gamma,k,\natural) \to 
\mh H(\mf t,W \rtimes \Gamma,\epsilon_l k,\natural) .
\]

\begin{lem}\label{lem:1.4}
Let $\mh H(\mf t,W \rtimes \Gamma,k,\natural)$ be a twisted graded Hecke algebra with a
real-valued parameter function $k$. Then it is isomorphic to a twisted graded Hecke algebra
$\mh H(\mf t,W \rtimes \Gamma,\epsilon k,\natural)$ with $\epsilon k : R \to \R_{\geq 0}$,
via an isomorphism $\phi_\epsilon$ that is the identity on $\mc O (\mf t \oplus \C)$.
\end{lem}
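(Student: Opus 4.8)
The plan is to build the desired isomorphism by composing the maps $\phi_\epsilon$, $\phi_{\epsilon_s}$, $\phi_{\epsilon_l}$ introduced just above, working one irreducible component of $R$ at a time. First I would reduce to the case where $R$ is irreducible: by the decomposition \eqref{eq:1.7}--\eqref{eq:1.22} (and its evident twisted analogue) the algebra $\mh H(\mf t,W\rtimes\Gamma,k,\natural)$ is a tensor product of pieces attached to the $W\rtimes\Gamma$-orbits of irreducible components $R_i$, and an isomorphism of the type $\phi_\epsilon$ on each factor that is the identity on the relevant $\mc O(\mf t_i)$ assembles into a global $\phi_\epsilon$ that is the identity on $\mc O(\mf t\oplus\C)$. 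So it suffices to flip the signs of the parameters on a single $W\rtimes\Gamma$-orbit of irreducible components to make them all nonnegative.

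Next, for $R$ irreducible, I would split into cases by type. If $R$ has only one root length (types $A_n$, $D_n$, $E_6$, $E_7$, $E_8$), then $k$ takes a single real value $k(\alpha)$ on all of $R$ (using $W\rtimes\Gamma$-invariance), and if it is negative I apply $\phi_\epsilon$ with $\epsilon$ the sign character of $W$ (which is $\Gamma$-invariant since $\Gamma$ fixes $\Delta$); this is the identity on $\mc O(\mf t)$ by construction, and sends $k$ to $-k\geq 0$. If $R$ has two root lengths (types $B_n$, $C_n$, $F_4$, $G_2$), then $k$ is determined by the two real values $k_l := k(\alpha_{\mathrm{long}})$ and $k_s := k(\alpha_{\mathrm{short}})$. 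Here I use the four available sign characters: the trivial one, $\epsilon$ (sign of $W$, flipping both), $\epsilon_s$ (flipping only the short roots), and $\epsilon_l$ (flipping only the long roots), all of which are $\Gamma$-invariant by the isometry remark in the text. Whichever of the four sign patterns $(\pm,\pm)$ is needed to turn $(k_l,k_s)$ into $(|k_l|,|k_s|)$, the corresponding $\phi$ does the job and is the identity on $\mc O(\mf t)$; this takes care of all four of these types simultaneously. For $R$ not irreducible, perform this on each orbit of components, as above.

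Finally I would observe that the target algebra is exactly of the form $\mh H(\mf t,W\rtimes\Gamma,\epsilon k,\natural)$ with $\epsilon k:R\to\R_{\geq 0}$, and that the composite isomorphism is the identity on $\C[W\rtimes\Gamma,\natural]$ in degree $0$ only up to the characters $\epsilon$—but it is genuinely the identity on the polynomial part $\mc O(\mf t)$, hence on $\mc O(\mf t\oplus\C)=\mc O(\mf t)\otimes\C[\mb r]$ since each $\phi_\epsilon$ fixes $\mb r$. The only mild subtlety, and the place I expect to spend the most care, is the bookkeeping for disconnected $\Gamma$: one must check that $\Gamma$ permuting the irreducible components $R_i$ forces $k$ to take equal values on components in one orbit, and that the sign characters $\epsilon$, $\epsilon_s$, $\epsilon_l$ used on such an orbit really are restrictions of a single $\Gamma$-invariant character of $W\rtimes\Gamma$—but this is immediate from the facts, already recorded in the text, that $\Gamma$ stabilizes $R$ and $\Delta$ and acts by isometries (so it cannot mix long and short roots).
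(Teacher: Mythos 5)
Your proposal is correct and in essence the same as the paper's proof: both apply $\phi_\epsilon$ for a $\Gamma$-invariant sign character of $W$ that flips exactly the simple reflections with negative parameter, and both rely on $\phi_\epsilon$ being the identity on $\mc O (\mf t \oplus \C)$. The paper simply sets $\epsilon (s_\alpha) = 1$ if $k(\alpha) \geq 0$ and $-1$ if $k(\alpha) < 0$, observing that $W \rtimes \Gamma$-invariance of $k$ makes this constant on conjugacy classes of simple reflections and hence a genuine character, so your type-by-type enumeration via $\epsilon_s, \epsilon_l$ (and the asserted twisted tensor decomposition over orbits of components, which is not literally an algebra decomposition but also not needed, since in the end you build one global character) can be bypassed.
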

\begin{proof}
Define
\[
\epsilon (s_\alpha) = \left\{ \begin{array}{cc}
1 & k (\alpha) \geq 0 \\
-1 & k (\alpha) < 0 
\end{array}\right. .
\]
Since $k$ is $\Gamma$-invariant, this extends to a $\Gamma$-invariant quadratic character of $W$.
Then $\phi_\epsilon$ has the required properties.
\end{proof}

With the above isomorphisms we will generalize the results of \cite[\S 6.2]{SolHecke}, from
graded Hecke algebras with positive parameters to twisted graded Hecke algebras with real parameters.

For the moment, we let $\mh H$ stand for either $\mh H (\mf t,W \rtimes \Gamma,k,\mb r,\natural)$ or
$\mh H (\mf t,W \rtimes \Gamma,k,\natural)$. Every finite dimensional $\mh H$-module $V$ is the direct 
sum of its generalized $\mc O (\mf t)$-weight spaces
\[
V_\lambda := \{ v \in V : (\xi - \xi(\lambda))^{\dim V} v = 0 \; \forall \xi \in \mc O (\mf t) \} 
\qquad \lambda \in \mf t . 
\]
We denote the set of $\mc O (\mf t)$-weights of $V$ by
\[
\mr{Wt}(V) = \{ \lambda \in \mf t : V_\lambda \neq 0 \} .
\]
Let $\mf a^{-}$ be the obtuse negative cone in $\R R \subset \mf a$ determined by $(R,\Delta)$. 
We denote the interior of $\mf a^{-}$ in $\R R$ by $\mf a^{--}$. 
We recall that a finite dimensional $\mh H$-module $V$ is tempered if 
\[
\mr{Wt}(V) \subset \mf a^{-} \oplus i \mf a
\] 
and that $V$ is essentially discrete series if, with $\mf z$ as in \eqref{eq:1.7}:
\[
\mr{Wt}(V) \subset \mf a^{--} \oplus (\mf z \cap \mf a) \oplus i \mf a. 
\]
For a subset $U$ of $\mf t$ we let $\Modf{U}(\mh H)$ be the category of finite dimensional
$\mh H$-modules $V$ with Wt$(V) \subset U$. For example, we have the category of $\mh H$-modules
with ``real" weights $\Modf{\mf a}(\mh H)$.
We indicate a subcategory/subset of tempered modules by a subscript ``temp". In particular, we have
the category of finite dimensional tempered $\mh H$-modules $\Mod_{\mr{fl}}(\mh H )_{\mr{temp}}$.

We want to compare the irreducible representations of
\[
\mh H (\mf t,W \rtimes \Gamma,k,\natural) = \mh H (\mf t,W \rtimes \Gamma,k,\mb r,\natural) / (\mb r - 1) 
\]
with those of
\[
\mh H (\mf t,W \rtimes \Gamma,0,\natural) = \mh H (\mf t,W \rtimes \Gamma,k,\mb r,\natural) / (\mb r) . 
\]
The latter algebra has $\Irr (\C [W \rtimes \Gamma,\natural])$ as the set of irreducible representations
on which $\mc O (\mf t)$ acts via evaluation at $0 \in \mf t$. The correct analogue of this for
$\mh H (\mf t,W \rtimes \Gamma,k,\natural)$, at least with $k$ real-valued, is
\[
\Irr_{\mf a} (\mh H (\mf t,W \rtimes \Gamma,k,\natural))_{\mr{temp}} 
:= \Irr (\mh H (\mf t,W \rtimes \Gamma,k, \natural))_{\mr{temp}} \cap 
\Modf{\mf a}( \mh H (\mf t,W \rtimes \Gamma,k,\natural)) .
\]
As $\C [W \rtimes \Gamma, \natural]$ is a subalgebra of $\mh H (\mf t,W \rtimes \Gamma,k,\natural)$,
there is a natural restriction map
\[
\Res_{W \rtimes \Gamma} : \Mod_{\mr{fl}} (\mh H (\mf t,W \rtimes \Gamma,k,\natural)) \to
\Mod_{\mr{fl}}(\C [W \rtimes \Gamma, \natural]) .
\]
However, when $k \neq 0$ this map usually does not preserve irreducibility, not even on
$\Irr_{\mf a} (\mh H (\mf t,W \rtimes \Gamma,k,\natural))_{\mr{temp}}$.
 
In the remainder of this paragraph we assume that the parameter function $k$ only takes real values.
Let $\epsilon$ be as in Lemma \ref{lem:1.4}. Since $\phi_\epsilon$ is the identity on 
$\mc O (\mf t \oplus \C)$, it induces equivalences of categories
\[
\begin{array}{llll}
\Modf{U}(\mh H (\mf t,W \rtimes \Gamma,\epsilon k,\natural) ) & \longrightarrow &
\Modf{U} (\mh H (\mf t,W \rtimes \Gamma,k,\natural)) & U \subset \mf t , \\
\Mod_{\mr{fl}}(\mh H (\mf t,W \rtimes \Gamma,\epsilon k,\natural) )_{\mr{temp}} & \longrightarrow &
\Mod_{\mr{fl}} (\mh H (\mf t,W \rtimes \Gamma,k,\natural))_{\mr{temp}} &
\end{array}
\]
and a bijection
\[
\Irr_{\mf a} (\mh H (\mf t,W \rtimes \Gamma,\epsilon k,\natural) )_{\mr{temp}} \longrightarrow
\Irr_{\mf a} (\mh H (\mf t,W \rtimes \Gamma,k,\natural) )_{\mr{temp}} .
\]

\begin{thm}\label{thm:1.3}
Let $k : R \to \R$ be a $\Gamma$-invariant parameter function.
\enuma{
\item The set $\Res_{W \rtimes \Gamma} (\Irr_{\mf a} \mh H (\mf t,W \rtimes \Gamma,k,
\natural)_{\mr{temp}} )$ is a $\Z$-basis of $\Z \, \Irr (\C [W \rtimes \Gamma, \natural])$.
}
Suppose that the restriction of $k$ to any type $F_4$ component of $R$ has $k(\alpha) = 0$ for a root 
$\alpha$ in that component or is the form $\epsilon k'$ for a character $\epsilon : W (F_4) \to \{\pm 1\}$
and a parameter function $k' : F_4 \to \R_{>0}$ which is geometric in the sense of the next remark.
\enuma{ \setcounter{enumi}{1}
\item There exist total orders on $\Irr_{\mf a} (\mh H (\mf t,W \rtimes \Gamma,k,\natural)_{\mr{temp}})$ 
and on $\Irr (\C [W \rtimes \Gamma ,\natural])$, such that the matrix of the $\Z$-linear map
\[
\Res_{W \rtimes \Gamma} : \Z \, \Irr_{\mf a} (\mh H (\mf t,W \rtimes \Gamma,k,\natural))_{\mr{temp}} 
\to \Z \, \Irr (\C [W \rtimes \Gamma ,\natural])
\]
is upper triangular and unipotent.
\item There exists a unique bijection
\[
\zeta_{\mh H (\mf t,W \rtimes \Gamma,k,\natural))} : \Irr_{\mf a} (\mh H (\mf t,W \rtimes \Gamma,
k,\natural))_{\mr{temp}} \to \Irr (\C [W \rtimes \Gamma, \natural])
\]
such that $\zeta_{\mh H (\mf t,W \rtimes \Gamma,\epsilon k,\natural)} (\pi)$ always
occurs in $\Res_{W \rtimes \Gamma} (\pi)$.
}
\end{thm}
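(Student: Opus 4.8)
The plan is to reduce the statement to the known results for untwisted graded Hecke algebras with positive geometric parameters, for which the analogous statements are proved in \cite[\S 6.2]{SolHecke}. First I would use the isomorphism $\phi_\epsilon$ of Lemma \ref{lem:1.4} to replace $k$ by $\epsilon k$ taking values in $\R_{\geq 0}$; since $\phi_\epsilon$ is the identity on $\mc O(\mf t \oplus \C)$, it intertwines the restriction functors to $\C[W \rtimes \Gamma, \natural]$ and preserves temperedness and the condition $\mr{Wt}(V) \subset \mf a$, so all three assertions are unchanged. Next I would remove the root-system components on which $k$ vanishes: the decomposition \eqref{eq:1.22} (extended to the twisted case, accounting for how $\Gamma$ permutes the irreducible factors) lets one factor $\mh H$ as a tensor product over $\Gamma$-orbits of components, and on a $\Gamma$-orbit of components where $k = 0$ the corresponding tensor factor is a crossed product $\mc O(\mf t_i) \rtimes \C[\cdots]$, whose tempered modules with real weights are exactly the ones supported at $0$, i.e.\ modules of the relevant twisted group algebra; these restrict isomorphically. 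So the problem localizes to $\Gamma$-orbits of components on which $k > 0$ everywhere (the geometric/positive case, with the $F_4$ caveat built into the hypotheses precisely so that \cite{SolHecke} applies).

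For such a factor, the next step is to handle the twist $\natural$ and the group $\Gamma$. The algebra $\mh H(\mf t, W \rtimes \Gamma, k, \natural)$ contains $\mh H(\mf t, W, k)$, and $\Irr(\mh H(\mf t, W, k))$ carries an action of $\Gamma$ (twisted by $\natural$ in the obvious sense); by Clifford theory, irreducible $\mh H(\mf t, W \rtimes \Gamma, \natural)$-modules are built from $\Gamma_\pi$-isotypic packets of $\mh H(\mf t, W, k)$-modules together with $\natural$-projective representations of the stabilizers $\Gamma_\pi$. The key point is that temperedness and the weight condition are detected on the restriction to $\mh H(\mf t, W, k)$, and that $\Res_{W \rtimes \Gamma}$ is compatible with $\Res_W$ under this Clifford correspondence on both sides (the same combinatorics govern $\Irr(\C[W \rtimes \Gamma, \natural])$ in terms of $\Irr(\C W)$). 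Thus the $\Z$-basis property in (a), the triangularity in (b), and the existence/uniqueness of $\zeta$ in (c) all descend from the untwisted $\Gamma$-equivariant statements, which in turn follow from \cite[\S 6.2]{SolHecke} by summing the triangular restriction matrices over $\Gamma$-orbits and over the tensor factors. For (c) specifically I would define $\zeta_{\mh H}(\pi)$ as the unique $\C[W \rtimes \Gamma, \natural]$-type occurring in $\Res_{W \rtimes \Gamma}(\pi)$ that is maximal (equivalently, the unique one not occurring in any $\Res_{W \rtimes \Gamma}(\pi')$ for $\pi'$ earlier in the order of (b)); uniqueness is immediate from unipotence of the transition matrix, and bijectivity from the fact that a unipotent upper-triangular integer matrix is invertible over $\Z$.

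For part (a) in the general real case (no positivity or $F_4$ hypothesis needed), I would argue slightly differently: the hard input is only that $\Res_{W \rtimes \Gamma}$ maps $\Z\,\Irr_{\mf a}(\mh H)_{\mr{temp}}$ onto a $\Z$-basis, which is a statement about the $\Z$-span, not about a specific triangular shape. For this I can use a deformation/specialization argument: the parameter $k$ can be scaled to $0$ via $zk$, and $m_z$ of \eqref{eq:1.16} gives isomorphisms for $z \neq 0$; combined with the fact that the tempered modules with real central character deform continuously in $k$ (their number and their $W \rtimes \Gamma$-restrictions being locally constant), one gets that $\{\Res_{W \rtimes \Gamma}(\pi)\}$ is independent of $k$ as a subset of $\Z\,\Irr(\C[W \rtimes \Gamma, \natural])$ up to the $\Z$-span, and at $k = 0$ it is literally all of $\Irr(\C[W \rtimes \Gamma, \natural])$. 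The subtlety here — and what I expect to be the main obstacle — is exactly this deformation step: one must control what happens at the parameter values where the $F_4$ (or $B_n/C_n$) tempered spectrum genuinely changes, which is why \cite{SolHecke} imposes the stated restriction for the refined statements (b) and (c) but not for (a). So I would prove (a) first, in full generality, by reducing to $k > 0$ via Lemma \ref{lem:1.4} and the component decomposition and then invoking the $\Z$-basis statement of \cite[\S 6.2]{SolHecke} (which does hold for all positive parameters, the $F_4$ issue affecting only the finer triangularity), and only then prove (b) and (c) under the extra hypothesis where the triangular structure of the restriction matrix from \cite{SolHecke} is available.
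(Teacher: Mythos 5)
Your overall strategy --- twist by $\phi_\epsilon$ from Lemma \ref{lem:1.4} to reach nonnegative parameters, quote \cite[\S 6.2]{SolHecke}, and handle $\Gamma$ and the cocycle $\natural$ by Clifford theory --- is essentially the paper's route (part (a) is quoted from \cite[Proposition 1.7]{SolK}, whose proof also reduces the whole theorem to trivial $\natural$; the passage from $W$ to $W \rtimes \Gamma$ is done via \cite[Lemmas 6.5 and 6.6]{SolHecke}). However, one step of your reduction fails as stated: the dichotomy ``either $k \equiv 0$ on an irreducible component (crossed product factor) or $k > 0$ on all of its roots'' is false. The theorem covers parameters that vanish on some but not all roots of a component: any such $k \geq 0$ in types $B_n$, $C_n$, $G_2$ (no hypothesis is imposed there), and in type $F_4$ the first alternative of the hypothesis, $k(\alpha) = 0$ for \emph{a} root of the component, is exactly such a case. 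A component on which $k$ vanishes only on one root length cannot be split off as a tensor/crossed-product factor, since the reflections with zero parameter do not commute with the remaining ones, so your localization to ``components with $k>0$ everywhere'' simply never reaches these parameters and your argument gives no way to treat them. The paper avoids this entirely because \cite[Theorem 6.2]{SolHecke} is invoked for all $k(\alpha) \geq 0$ (the $F_4$ condition in the statement encodes the residual restriction of that theorem), so no splitting into zero and positive components is needed; the correct move is to drop that step and apply the sign twist plus the citation directly.

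Two smaller points. First, $\phi_\epsilon$ does not intertwine restriction to $\C[W \rtimes \Gamma, \natural]$: since $\phi_\epsilon (N_w) = \epsilon(w) N_w$ one has $\Res (\phi_\epsilon^* V) \cong \Res (V) \otimes \epsilon$; the conclusions do survive because tensoring with the linear character $\epsilon$ permutes $\Irr (\C[W \rtimes \Gamma, \natural])$, and the paper records this twist explicitly in the commutative diagram \eqref{eq:1.8} --- you should too, as it is what makes the transfer of (b) and (c) legitimate. Second, your deformation argument for (a) assumes that the tempered spectrum with real weights and its $W \rtimes \Gamma$-restriction are locally constant in $k$; that is essentially the content of the result being proven and cannot be taken for granted (the tempered spectrum genuinely changes at special parameters). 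The safe version of (a), and what the paper does, is the citation of \cite[Proposition 1.7]{SolK}, which moreover supplies the reduction to trivial $\natural$; your fallback of citing the $\Z$-basis statement in \cite{SolHecke} after the sign twist is fine, provided you do not route it through the flawed component splitting.
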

\begin{rem} 
Geometric parameter functions will appear in Section \ref{sec:cuspidal}. Let us make the allowed 
parameter functions for a type $F_4$ root system explicit here. Write $k = (k(\alpha), 
k(\beta))$ where $\alpha$ is short root and $\beta$ is a long root. The possibilities are
\[
(0,0), (c,0), (0,c), (c,c), (2c,c), (c/2,c), (4c,c), (-c,c), (-2c,c), (-c/2,c), (-4c,c) ,
\]
where $c \in \R^\times$ is arbitrary. We expect that Theorem \ref{thm:1.3} also holds without extra
conditions for type $F_4$.
\end{rem}
\begin{proof}
(a) is known from \cite[Proposition 1.7]{SolK}. The proof of that shows we can reduce the entire
theorem to the case where $\natural$ is trivial. We assume that from now on, and omit $\natural$
from the notations.

Parts (b) and (c) were shown in \cite[Theorem 6.2]{SolHecke}, provided that $k(\alpha) \geq 0$
for all $\alpha \in R$. Choose $\epsilon$ as in Lemma \ref{lem:1.4}, so that 
$\epsilon k : R \to \R_{\geq 0}$. For $V \in \Mod_{\mr{fl}}(\mh H (\mf t, W ,\epsilon k))$ we have
\[
\Res_{W} (\phi_\epsilon^* V) = \Res_{W} (V) \otimes \epsilon ,
\]
so we obtain a commutative diagram
\begin{equation}\label{eq:1.8}
\begin{array}{ccc}
\Z \, \Irr_{\mf a} (\mh H (\mf t, W ,\epsilon k) )_{\mr{temp}} & \xrightarrow{\Res_W} & \Z \, \Irr (W) \\
\downarrow \phi_\epsilon^* & & \downarrow \otimes \epsilon \\
\Z \, \Irr_{\mf a} (\mh H (\mf t, W , k) )_{\mr{temp}} & \xrightarrow{\Res_W} & \Z \, \Irr (W) 
\end{array}
\end{equation}
All the maps in this diagram are bijective and the vertical maps preserve irredu\-ci\-bi\-li\-ty.
Thus the theorem for $\mh H (\mf t,W,\epsilon k)$ implies it for $\mh H (\mf t,W,k)$.

The commutative diagram \eqref{eq:1.8} also allows us to extend \cite[Lemma 6.5]{SolHecke} from
$\mh H (\mf t,W, \epsilon k)$ to $\mh H (\mf t,W, k)$. Then we can finish our proof for
$\mh H (\mf t,W \rtimes \Gamma, k)$ by applying \cite[Lemma 6.6]{SolHecke}.
\end{proof}

\begin{thm}\label{thm:1.10}
Let $\mh H (\mf t,W \rtimes \Gamma,k,\natural)$ be as in Theorem \ref{thm:1.3}.b.
There exists a canonical bijection
\[
\zeta_{\mh H (\mf t,W \rtimes \Gamma,k,\natural))} : \Irr (\mh H (\mf t,W \rtimes \Gamma,
k,\natural)) \to \Irr (\mh H (\mf t,W \rtimes \Gamma,0,\natural)) 
\]
which (as well as its inverse)
\begin{itemize}
\item respects temperedness,
\item preserves the intersections with $\Modf{\mf a}$,
\item generalizes Theorem \ref{thm:1.3}.c, via the identification 
\[
\Irr_{\mf a} (\mh H (\mf t,W \rtimes \Gamma,0,\natural))_{\mr{temp}} = 
\Irr (\C [W \rtimes \Gamma,\natural]) .
\]
\end{itemize}
\end{thm}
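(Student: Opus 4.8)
The plan is to deduce Theorem~\ref{thm:1.10} from Theorem~\ref{thm:1.3} by a combination of the Langlands classification for graded Hecke algebras and the scaling isomorphisms \eqref{eq:1.16}, \eqref{eq:1.11}. First I would set up the Langlands classification: every irreducible $\mh H (\mf t, W \rtimes \Gamma, k, \natural)$-module is the unique irreducible quotient of a standard module $\pi (P, \delta, \nu)$ induced from a tempered representation $\delta$ of a parabolic subalgebra $\mh H_P$ twisted by a strictly positive real-part parameter $\nu$ in the appropriate cone of $\mf t_P = Z(\mf t_P)$; this holds uniformly in $k$ and reduces the classification of $\Irr$ to the classification of tempered modules of all parabolic subalgebras. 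Since parabolic subalgebras of $\mh H (\mf t, W \rtimes \Gamma, k, \natural)$ are again twisted graded Hecke algebras (with $W \rtimes \Gamma$ replaced by $W_P \rtimes \Gamma_P$, $\mf t$ unchanged, and $k$ restricted), and since the cone conditions in Theorem~\ref{thm:1.3}.b are inherited by subsystems, the hypothesis propagates to all parabolics.

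Next I would treat the tempered modules with arbitrary (not necessarily real) central character by splitting off the imaginary part: a tempered $\mh H (\mf t, W \rtimes \Gamma, k, \natural)$-module has $\mr{Wt}(V) \subset \mf a^{-} \oplus i\mf a$, and translation by a fixed element of $(\mf z \cap i\mf a)$ together with the decomposition \eqref{eq:1.7}, \eqref{eq:1.22} reduces one to the case of real weights, i.e.\ to $\Modf{\mf a}$, and hence to $\Irr_{\mf a}(\cdots)_{\mr{temp}}$ on each irreducible factor. Running this argument simultaneously for $k$ and for $k = 0$, and invoking Theorem~\ref{thm:1.3}.c (which, under the stated identification, gives the bijection $\zeta$ on $\Irr_{\mf a}(\cdots)_{\mr{temp}}$ for each irreducible component), one obtains a bijection on all tempered modules compatible with the central-character decomposition; then feeding this through the Langlands classification produces the desired bijection
\[
\zeta_{\mh H (\mf t, W \rtimes \Gamma, k, \natural)} : \Irr (\mh H (\mf t, W \rtimes \Gamma, k, \natural)) \to \Irr (\mh H (\mf t, W \rtimes \Gamma, 0, \natural)) .
\]
Canonicity and the three bulleted properties follow because at every stage the bijection is forced: temperedness and the $\Modf{\mf a}$-condition are built into the construction, and the compatibility with Theorem~\ref{thm:1.3}.c is exactly the base case of the induction on parabolic rank. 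Uniqueness comes from the fact that a standard module is determined by its Langlands data and $\zeta$ is prescribed on the tempered building blocks.

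I expect the main obstacle to be the precise bookkeeping in the Langlands classification when $\Gamma$ and the cocycle $\natural$ are present, and in particular checking that the positivity/temperedness conditions transfer correctly between $k$ and $0$ under the identification of underlying vector spaces. Concretely, one must verify that a Langlands datum $(P, \delta, \nu)$ for $\mh H (\mf t, W \rtimes \Gamma, k, \natural)$ and the ``same'' datum for $\mh H (\mf t, W \rtimes \Gamma, 0, \natural)$ (matched via $\zeta$ on $\delta$ and the identity on $\nu \in \mf t_P$) really do yield standard modules with the same irreducible-quotient behaviour; this requires knowing that $\mh H$ and the crossed product \eqref{eq:1.9} have ``the same'' standard modules as sets indexed by Langlands data, which one can extract from the fact that the associated graded of $\mh H (\mf t, W \rtimes \Gamma, k, \natural)$ is \eqref{eq:1.9} together with a deformation argument, or alternatively by citing the relevant results of \cite{SolHecke, SolK}. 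Once that matching is in place, the rest is a formal assembly of the parabolic induction with the already-established Theorem~\ref{thm:1.3}.
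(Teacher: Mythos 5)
Your overall strategy -- Langlands classification to reduce to tempered modules of parabolic subalgebras, Theorem \ref{thm:1.3} on the tempered level, then reassembly -- is in substance the argument behind the citation the paper relies on: the paper reduces to trivial $\natural$ as in the proof of Theorem \ref{thm:1.3}.a and then runs the proof of \cite[Proposition 6.8]{SolHecke} with Theorem \ref{thm:1.3} in place of \cite[Theorem 6.2]{SolHecke}. However, your proposal has a genuine gap in the treatment of non-real weights. You claim that a tempered module can be brought into $\Modf{\mf a}$ by translating by an element of $i(\mf z \cap \mf a)$ and invoking \eqref{eq:1.7}, \eqref{eq:1.22}; but that only removes imaginary parts that are $W \rtimes \Gamma$-invariant, and in general the imaginary parts of the weights of a tempered irreducible module are not central. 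Already for $R = A_1$ (so $\mf z = 0$) and $k>0$, the principal series module with weights $\pm i\mu$, $\mu \in \mf a$ with $\alpha(\mu) \neq 0$, is irreducible and tempered; it is its own Langlands datum (with $\nu = 0$), so your induction never pushes it down to a smaller parabolic, and no central translation makes its weights real. Handling such modules requires a genuine reduction to the isotropy subalgebra $\mh H (\mf t, W_\mu \rtimes \Gamma_\mu, k, \natural)$ attached to the roots vanishing on the imaginary part $\mu$ (a Lusztig-type reduction/Clifford theory step); on that subalgebra translation by $i\mu$ \emph{is} an algebra automorphism, since the braid relations only involve roots $\alpha$ with $\alpha(\mu) = 0$. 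Alternatively one can use the classification of tempered modules via discrete series (which have real central character) and unitarily induced imaginary twists. This is precisely the point where passing from $\Irr_{\mf a}(\cdot)_{\mr{temp}}$ to all of $\Irr$ needs an argument, and it is what the paper addresses by asserting that the argument of \cite[Proposition 6.8]{SolHecke} applies beyond $\Modf{\mf a}$.

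A second, smaller point: the obstacle you flag yourself -- that a Langlands datum for parameter $k$ and the matched datum for parameter $0$ have the same irreducible-quotient behaviour -- does not follow formally from the associated graded statement \eqref{eq:1.9}; it is the actual content of \cite[Theorem 6.2 and Proposition 6.8]{SolHecke}, i.e.\ of Theorem \ref{thm:1.3} and its extension, so it must be carried out (or cited), not deduced by a generic deformation argument. With the central-translation step replaced by the isotropy-group reduction (or the analytic scaling arguments of \cite{SolGHA,SolHecke}), the rest of your assembly goes through and coincides with the paper's route.
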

\begin{proof}
As discussed in the proof of Theorem \ref{thm:1.3}.a, we can easily reduce to the case where
$\natural$ is trivial. In \cite[Proposition 6.8]{SolHecke}, that case is derived from 
\cite[Theorem 6.2]{SolHecke} (under more strict conditions on the parameters $k$). Using Theorem
\ref{thm:1.3} instead of \cite[Theorem 6.2]{SolHecke}, this works for all parameters allowed
in Theorem \ref{thm:1.3}. Although \cite[Proposition 6.8]{SolHecke} is only formulated for
irreducible representations in $\Modf{\mf a} (\mh H (\mf t,W \rtimes \Gamma,k))$,
the argument applies to all of $\Irr (\mh H (\mf t,W \rtimes \Gamma,k))$. 
\end{proof}

\subsection{Global dimension} \
\label{par:gldim}

We want to determine the global dimension of $\mh H (\mf t, W \rtimes \Gamma, k, \mb r, \natural)$. 
For $\mh H (\mf t,W,kr)$ it has already been done in \cite{SolGHA}, and our argument is based on
reduction to that case. A lower bound for the global dimension is easily obtained:

\begin{lem}\label{lem:1.13}
$\mr{gl. \, dim} (\mh H (\mf t, W \rtimes \Gamma, k, \mb r, \natural)) \geq \dim_\C (\mf t \oplus \C)$.
\end{lem}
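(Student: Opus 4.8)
The plan is to exhibit a module whose projective dimension is at least $\dim_\C(\mf t \oplus \C)$, which forces the global dimension to be at least that large. The natural candidate is the one-dimensional module on which $\C[W \rtimes \Gamma, \natural]$ acts trivially (or via some fixed character on which $W$ acts trivially — but here the trivial representation of $W$ suffices since the point is only a lower bound) and on which the polynomial subalgebra $\mc O(\mf t) \otimes \C[\mb r]$ acts via evaluation at $0 \in \mf t \oplus \C$. Call this module $\C_0$. First I would check that this is indeed a well-defined $\mh H (\mf t, W \rtimes \Gamma, k, \mb r, \natural)$-module: the braid relation of Proposition \ref{prop:1.1} reads $N_{s_\alpha}\xi - {}^{s_\alpha}\xi\, N_{s_\alpha} = k(\alpha)\mb r(\xi - {}^{s_\alpha}\xi)/\alpha$, and when we evaluate on $\C_0$ with $\mb r \mapsto 0$ the right-hand side vanishes and both sides of the left-hand side act as scalars consistently, so the relations are satisfied.

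The key step is then a restriction-of-scalars argument to the commutative polynomial subalgebra. Let $A = \mc O(\mf t) \otimes_\C \C[\mb r] = \mc O(\mf t \oplus \C)$, a polynomial ring in $n := \dim_\C(\mf t \oplus \C)$ variables. By the PBW-type decomposition underlying Proposition \ref{prop:1.1}, $\mh H := \mh H (\mf t, W \rtimes \Gamma, k, \mb r, \natural)$ is free as a left (and as a right) $A$-module, with basis $\{N_w : w \in W \rtimes \Gamma\}$; in particular $\mh H$ is faithfully flat over $A$. Hence for any $\mh H$-modules $V, V'$ there is an inequality relating $\Ext$-groups over $\mh H$ and over $A$ after restriction; more precisely, the standard change-of-rings spectral sequence, together with the freeness of $\mh H$ over $A$ on the relevant side, shows that a projective resolution of an $\mh H$-module restricts to a projective (indeed free) resolution over $A$, so $\mr{pd}_A(\Res_A V) \le \mr{pd}_{\mh H}(V)$. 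Applying this to $V = \C_0$: the restriction $\Res_A \C_0$ is the residue field $A/\mf m$ at the maximal ideal $\mf m = (\mf t^\vee, \mb r)$, which has projective dimension exactly $n$ over the regular local (or graded) ring $A$ by the Koszul resolution / Auslander–Buchsbaum. Therefore $\mr{pd}_{\mh H}(\C_0) \ge n$, and $\mr{gl.\,dim}(\mh H) \ge n = \dim_\C(\mf t \oplus \C)$.

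The main obstacle is making the change-of-rings inequality $\mr{pd}_A(\Res_A V) \le \mr{pd}_{\mh H}(V)$ fully rigorous in this non-commutative setting. The clean way is: $\mh H$ is free as a right $A$-module, so the functor $\mh H \otimes_A (-)$ sends free $A$-modules to free $\mh H$-modules and is exact; dually, since $\mh H$ is free as a left $A$-module, restriction $\Res_A$ sends free (hence projective) $\mh H$-modules to free $A$-modules and is exact. Thus if $P_\bullet \to V$ is a finite projective $\mh H$-resolution of length $\ell = \mr{pd}_{\mh H}(V)$, then $\Res_A P_\bullet \to \Res_A V$ is a projective $A$-resolution of length $\ell$, giving $\mr{pd}_A(\Res_A V) \le \ell$. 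With $V = \C_0$ this yields $n \le \ell$, completing the proof. One should only double-check the harmless point that one may work with the trivial $W$-character (rather than worrying about $\natural$-genuine characters of $W \rtimes \Gamma$), since $W$ itself — being a Coxeter group — always carries the trivial character and the twisting by $\natural$ is inflated from $\Gamma$, so $\C_0$ as defined above genuinely exists regardless of $\natural$.
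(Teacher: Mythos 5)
Your core mechanism is sound and genuinely different from the paper's. Since $\mh H = \mh H (\mf t, W \rtimes \Gamma, k, \mb r, \natural)$ is free as a left module over $A = \mc O (\mf t \oplus \C)$ with basis $\{ N_w \}$, restriction of scalars is exact and sends projective $\mh H$-modules to projective (indeed free) $A$-modules, so $\mr{pd}_A (\Res_A V) \leq \mr{pd}_{\mh H}(V)$ for every $\mh H$-module $V$; combined with the fact that a nonzero finite-length $A$-module has projective dimension exactly $\dim_\C (\mf t \oplus \C)$, this yields the lower bound. The paper instead induces a character $\C_{\lambda,r}$ of $A$ with $\lambda$ regular for $W \rtimes \Gamma$, uses a Barbasch--Moy type statement to identify the restriction of the induced module as $\bigoplus_w \C_{w\lambda,r}$, and computes $\mr{Ext}^n_{\mh H}$ of the induced module with itself via Frobenius reciprocity, obtaining $\bigwedge^n (\mf t \oplus \C) \neq 0$ for $n \leq \dim_\C (\mf t \oplus \C)$. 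Your route avoids both the regular-weight trick and that extra input, at the price of invoking Auslander--Buchsbaum (or the Koszul complex) over the polynomial ring.

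There is, however, one genuine flaw: the module $\C_0$ you build on need not exist. A one-dimensional $\mh H$-module restricts to a one-dimensional module of the subalgebra $\C [\Gamma, \natural]$ spanned by the $N_\gamma$ with $\gamma \in \Gamma$, and such a character exists only when the cocycle $\natural$ is a coboundary; for a cohomologically nontrivial $\natural$ (for instance $\Gamma \cong (\Z /2\Z)^2$ with $\C [\Gamma,\natural] \cong M_2 (\C)$) the twisted group algebra has no one-dimensional representations at all, hence neither does $\mh H$. Your parenthetical justification -- that $W$ carries the trivial character and $\natural$ is inflated from $\Gamma$ -- does not address this, because the character must be defined on all of $\C [W \rtimes \Gamma, \natural]$. (Your verification of the braid relations at the weight $(0,0)$ is fine; the obstruction sits purely in the group-algebra part.) The repair is immediate and stays inside your framework: apply the restriction inequality to $V = \mr{ind}_A^{\mh H} \C_{(0,0)}$, or to any nonzero finite-dimensional $\mh H$-module. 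Its restriction to $A$ is nonzero of finite length, hence of depth $0$, hence of projective dimension exactly $\dim_\C (\mf t \oplus \C)$ over $A$, and the lemma follows as before.
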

\begin{proof}
We abbreviate $\mh H = \mh H (\mf t, W \rtimes \Gamma, k, \mb r, \natural)$.
Pick $\lambda \in \mf t$ such that $w \lambda \neq \lambda$ for all $w \in W \rtimes \Gamma 
\setminus \{1\}$. Fix any $r \in \C$ and let $\C_{\lambda,r}$ be the onedimensional 
$\mc O (\mf t \oplus \C)$-module with character $(\lambda,r)$. By \cite[Theorem 6.4]{BaMo},
which generalizes readily to include $\Gamma$, the $\mc O (\mf t)$-weights of
\begin{equation}\label{eq:1.35}
\Res^{\mh H}_{\mc O (\mf t \oplus \C)} \mr{ind}_{\mc O (\mf t \oplus \C)}^{\mh H} \C_{\lambda,r} 
\end{equation}
are precisely the $w \lambda$ with $w \in W \rtimes \Gamma$. These are all different and the
dimension of \eqref{eq:1.35} is $|W \rtimes \Gamma|$, so \eqref{eq:1.35} must be isomorphic with
$\bigoplus\nolimits_{w \in W \rtimes \Gamma} \C_{w \lambda,r}$.
By Frobenius reciprocity
\begin{align}\nonumber  
\mr{Ext}^n_{\mh H} \big( \mr{ind}_{\mc O (\mf t \oplus \C)}^{\mh H} \C_{\lambda,r} ,
\mr{ind}_{\mc O (\mf t \oplus \C)}^{\mh H} \C_{\lambda,r} \big) \cong
\mr{Ext}^n_{\mc O (\mf t \oplus \C)} \big( \C_{\lambda,r}, \Res^{\mh H}_{\mc O (\mf t \oplus \C)} 
\mr{ind}_{\mc O (\mf t \oplus \C)}^{\mh H} \C_{\lambda,r} \big) \\
\label{eq:1.65} \cong \bigoplus\nolimits_{w \in W \rtimes \Gamma} \mr{Ext}^n_{\mc O (\mf t \oplus \C)} 
\big( \C_{\lambda,r},\C_{w \lambda, r} \big) = \mr{Ext}^n_{\mc O (\mf t \oplus \C)} 
\big( \C_{\lambda,r}, \C_{\lambda, r} \big) .
\end{align}
It is well-known (and can be computed with a Koszul resolution) that the last expression equals
(with $\mf T$ for tangent space)
\begin{equation}\label{eq:1.66}
\bigwedge\nolimits^n \big( \mf T_{\lambda,r} (\mf t \oplus \C) \big) = 
\bigwedge\nolimits^n (\mf t \oplus \C) .
\end{equation}
This is nonzero when $0 \leq n \leq \dim_\C (\mf t \oplus \C)$, so the global dimension
must be at least $\dim_\C (\mf t \oplus \C)$.
\end{proof}

With a general argument, the computation of the global dimension of \\ $\mh H (\mf t, W \rtimes 
\Gamma, k, \mb r, \natural)$ can be reduced to the cases with $\Gamma = \{1\}$.

\begin{lem}\label{lem:1.11}
Let $\Gamma$ be a finite group acting by automorphisms on a complex algebra $A$. Let
$\natural : \Gamma^2 \to \C^\times$ be a 2-cocycle and build the twisted crossed product
$A \rtimes \C [\Gamma,\natural]$ with multiplication relations as in Proposition \ref{prop:1.1} --
the role of $\mh H (\mf t,W,k,\mb r)$ is played by $A$. Then
\[
\mr{gl. \, dim} (A \rtimes \C [\Gamma,\natural]) = \mr{gl. \, dim} (A) .
\]
\end{lem}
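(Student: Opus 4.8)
The plan is to exploit the fact that $A \rtimes \C[\Gamma,\natural]$ is a free $A$-module of finite rank $|\Gamma|$ on both sides, together with the averaging trick that the order of $\Gamma$ is invertible in $\C$. First I would observe that restriction and (co)induction along the inclusion $A \hookrightarrow B := A \rtimes \C[\Gamma,\natural]$ behave well: since $B$ is free of finite rank as a left and right $A$-module, $\Res^B_A$ is exact and sends projectives to projectives, while $\mr{ind}_A^B = B \otimes_A (-)$ and $\mr{coind}_A^B = \Hom_A(B,-)$ are both exact (as $B$ is $A$-free) and preserve projectivity/injectivity respectively. This immediately gives $\mr{gl.\,dim}(B) \geq \mr{gl.\,dim}(A)$ is \emph{not} what we want directly; rather the easy inequality is $\mr{gl.\,dim}(A) \leq \mr{gl.\,dim}(B)$, proved by noting that for any $A$-module $M$, $M$ is a direct summand of $\Res^B_A \mr{ind}_A^B M$ (the splitting uses $\tfrac{1}{|\Gamma|}\sum_{\gamma} N_\gamma \otimes N_\gamma^{-1}(-)$, a Maschke-type averaging), so $\mathrm{pd}_A(M) \leq \mathrm{pd}_A(\Res \mr{ind}\, M) \leq \mathrm{pd}_B(\mr{ind}\, M) \leq \mr{gl.\,dim}(B)$.

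**The reverse inequality.** For $\mr{gl.\,dim}(B) \leq \mr{gl.\,dim}(A)$, I would take an arbitrary $B$-module $N$ and compare a projective resolution over $A$ with one over $B$. Choose a projective resolution $P_\bullet \to \Res^B_A N$ of length $\leq \mr{gl.\,dim}(A) =: d$ over $A$. Applying the exact functor $\mr{ind}_A^B$ yields a complex $\mr{ind}_A^B P_\bullet \to \mr{ind}_A^B \Res^B_A N$ of projective $B$-modules of length $\leq d$. The counit $\mr{ind}_A^B \Res^B_A N \to N$ is a surjection of $B$-modules which splits: again by the averaging map $n \mapsto \tfrac{1}{|\Gamma|}\sum_\gamma N_\gamma \otimes N_\gamma^{-1} n$, which one checks is $B$-linear because $\natural$ is a cocycle and the crossed-product relations $N_\gamma \xi N_\gamma^{-1} = {}^\gamma\xi$ hold. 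Hence $N$ is a direct summand (as $B$-module) of $\mr{ind}_A^B \Res^B_A N$, which has projective dimension $\leq d$ over $B$; therefore $\mathrm{pd}_B(N) \leq d$. Taking the supremum over $N$ gives $\mr{gl.\,dim}(B) \leq \mr{gl.\,dim}(A)$.

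**Main obstacle.** The only genuinely delicate point is verifying that the averaging section $N \to \mr{ind}_A^B \Res^B_A N$, $n \mapsto \tfrac{1}{|\Gamma|}\sum_{\gamma \in \Gamma} N_\gamma \otimes N_\gamma^{-1} n$, is well-defined and $B$-linear in the \emph{twisted} setting: one must track the cocycle factors $\natural(\gamma,\gamma')$ carefully, using that $N_\gamma^{-1} = \natural(\gamma,\gamma^{-1})^{-1} N_{\gamma^{-1}}$ up to the appropriate normalization, and that conjugation by $N_\gamma$ acts on $A$ as the automorphism ${}^\gamma(-)$ independently of $\natural$. Once the section is confirmed $B$-linear, everything else is the standard change-of-rings bookkeeping, and no new ideas are required. (Alternatively, one can phrase the whole argument cohomologically: $\Ext^n_B(N,N') \hookrightarrow \Ext^n_A(N,N')$ as a direct summand via the transfer/averaging map, which vanishes for $n > d$; this is perhaps the cleanest write-up.)
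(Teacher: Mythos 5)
Your proposal is correct and takes essentially the same route as the paper: writing $B = A \rtimes \C[\Gamma,\natural]$, both arguments rest on the averaging section $V \to \mr{ind}_A^B \mr{Res}^B_A V$, $v \mapsto \sum_{\gamma} N_\gamma^{-1} \otimes N_\gamma v$ (your map is the same one up to the harmless factor $1/|\Gamma|$ and the relabeling $\gamma \mapsto \gamma^{-1}$), and the paper's proof consists exactly of the cocycle bookkeeping you single out as the delicate point -- verifying that this section is $B$-linear -- followed by the same change-of-rings/Frobenius-reciprocity bound on projective dimensions. One small repair: in the easy direction $M$ is only an $A$-module, so $N_\gamma^{-1}(-)$ does not act on it and no averaging is available or needed there; instead $M$ is an $A$-module direct summand of $\mr{Res}^B_A \mr{ind}_A^B M \cong \bigoplus_{\gamma \in \Gamma} \gamma^*(M)$ simply as the $\gamma = 1$ summand, which is all your inequality $\mr{pd}_A(M) \leq \mr{gl.\,dim}(B)$ requires.
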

\begin{proof}
For any $A$-module $M$
\[
\mr{Res}_A^{A \rtimes \C [\Gamma,\natural]} \mr{ind}_A^{A\rtimes \C [\Gamma,\natural]} M
\cong \bigoplus\nolimits_{\gamma \in \Gamma} \gamma^* (M) .
\]
Hence $\mr{Ext}^n_A (M',M)$ is a direct summand of 
\[
\mr{Ext}^n_A \big( M', \mr{Res}_A^{A \rtimes \C [\Gamma,\natural]} \mr{ind}_A^{A\rtimes 
\C [\Gamma,\natural]} M \big) \cong
\mr{Ext}^n_A \big( \mr{ind}_A^{A\rtimes \C [\Gamma,\natural]} M',\mr{ind}_A^{A\rtimes 
\C [\Gamma,\natural]} M \big) .
\]
In particular gl. dim $(A) \leq$ gl. dim $(A\rtimes \C [\Gamma,\natural])$.

For any $A \rtimes \C [\Gamma,\natural]$-module $V$ there is a surjective module
homomorphism 
\[
\begin{array}{cccc}
p : & \mr{ind}_A^{A\rtimes \C [\Gamma,\natural]} \mr{Res}_A^{A \rtimes \C [\Gamma,\natural]} V &
\to & V \\
& x \otimes v & \mapsto & x v 
\end{array} .
\]
On the other hand, there is a natural injection
\[
\begin{array}{cccc}
\imath : & V & \to &
\mr{ind}_A^{A\rtimes \C [\Gamma,\natural]} \mr{Res}_A^{A \rtimes \C [\Gamma,\natural]} V \\
& v & \mapsto & \sum_{\gamma \in \Gamma} N_\gamma^{-1} \otimes N_\gamma v  
\end{array} .
\]
This in fact a module homomorphism. Namely, for $a \in A$:
\begin{align*}
\imath (a v) & = \sum\nolimits_{\gamma \in \Gamma} N_\gamma^{-1} \otimes N_\gamma a v =
\sum\nolimits_{\gamma \in \Gamma} N_\gamma^{-1} \otimes \gamma (a) N_\gamma v \\
& = \sum\nolimits_{\gamma \in \Gamma} N_\gamma^{-1} \gamma (a) \otimes N_\gamma v =
\sum\nolimits_{\gamma \in \Gamma} a N_\gamma^{-1} \otimes N_\gamma v = a \imath (v) . 
\end{align*}
Similarly, for $g \in \Gamma$:
\begin{align*}
\imath (N_g v) & = \sum\nolimits_{\gamma \in \Gamma} N_\gamma^{-1} \otimes N_\gamma N_g v =
\sum\nolimits_{\gamma \in \Gamma} N_g N_g^{-1} N_\gamma^{-1} \otimes N_\gamma N_g v \\
& = \sum\nolimits_{\gamma \in \Gamma} N_g N_{\gamma g}^{-1} \otimes \ind_{\mc O 
(\mf t \oplus \C)}^{\mh H} \C_{\lambda,r}s N_{\gamma g} v =
\sum\nolimits_{h \in \Gamma} N_g N_h^{-1} \otimes N_h v = N_g \imath (v) . 
\end{align*}
Clearly $p \circ \imath = |\Gamma| \, \mr{id}_V$, so
\[
\mr{ind}_A^{A\rtimes \C [\Gamma,\natural]} \mr{Res}_A^{A \rtimes \C [\Gamma,\natural]} V \cong
V \oplus \ker p \qquad \text{as } A \rtimes \C [\Gamma,\natural] \text{-modules.}
\]
For any second $A \rtimes \C [\Gamma,\natural]$-module $V'$,
$\mr{Ext}^n_{A \rtimes \C [\Gamma,\natural]} (V,V')$ is a direct summand of
\[
\mr{Ext}^n_{A \rtimes \C [\Gamma,\natural]} \big( V \oplus \ker p,V' \big) =
\mr{Ext}^n_{A \rtimes \C [\Gamma,\natural]} \big( \mr{ind}_A^{A\rtimes \C [\Gamma,\natural]} 
\mr{Res}_A^{A \rtimes \C [\Gamma,\natural]} V,V' \big) .
\]
By Frobenius reciprocity the latter is isomorphic with 
\[
\mr{Ext}^n_A \big( \mr{Res}_A^{A \rtimes \C [\Gamma,\natural]} V, 
\mr{Res}_A^{A \rtimes \C [\Gamma,\natural]} V' \big). 
\]
Hence $\mr{Ext}^n_{A \rtimes \C [\Gamma,\natural]} (V,V')$ vanishes whenever $n >\,$gl. dim$(A)$.
\end{proof}

In view of Lemma \ref{lem:1.11} and the construction of $\mh H (\mf t, W \rtimes \Gamma,k,
\mb{r},\natural)$, it can be expected that it has the same global dimension as
$\mc O (\mf t \oplus \C)$. The latter equals $\dim_\C (\mf t \oplus \C)$,
see for instance \cite[Theorem 4.3.7]{Wei}. 

While the global dimensions of these algebras do indeed agree, Lemma \ref{lem:1.11} does not 
suffice to show that. One complication is that a map like $\imath$ above is not a module 
homomorphism in the setting of the group $W \rtimes \Gamma$ and the algebra 
$\mc O (\mf t \oplus \C)$, when the parameters of the Hecke algebra are nonzero. 

The centre of $\mh H (\mf t,W,k,\mb r)$ was identified in \cite[Proposition 4.5]{Lus-Gr} as
\begin{equation}\label{eq:1.56}
Z(\mh H (\mf t,W,k,\mb r)) = \mc O (\mf t)^W \otimes \C [\mb r] .
\end{equation}
From this and Proposition \ref{prop:1.1} we see that 
\begin{equation}\label{eq:1.61}
\mh H (\mf t,W,k,\mb r) \text{ has finite rank as module over } Z(\mh H (\mf t,W,k,\mb r)). 
\end{equation}
For $r \in \C$, let $\hat{\mh H}_r$ be the formal completion of $\mh H (\mf t,W,k,\mb r)$ with 
respect to the ideal $(\mb r - r)$ of $\C [\mb r]$. By \eqref{eq:1.56}, $\hat{\mh H}_r$ is also 
the formal completion of $\mh H (\mf t,W,k,\mb r)$ with respect to the ideal 
\[
I_r = Z(\mh H (\mf t,W,k,\mb r)) (\mb r - r) \; \subset \; Z(\mh H (\mf t,W,k,\mb r)). 
\]
For an $\mh H (\mf t,W,k,\mb r)$-module $V$, we denote its completion with respect to 
$(\mb r -r)$, or equivalently with respect to $I_r$, by $\hat V_r$. If $V$ is finitely 
generated as $\mh H (\mf t,W,k,\mb r)$-module, then by \eqref{eq:1.61} it is also finitely 
generated over $Z(\mh H (\mf t,W,k,\mb r))$, so the natural map 
\begin{equation}\label{eq:1.58}
Z(\hat{\mh H}_r) \otimes_{Z(\mh H(\mf t,W,k,\mb r))} V =
\hat{\mh H}_r \otimes_{\mh H (\mf t,W,k,\mb r)} V 
\longrightarrow \varprojlim_n V / I_r^n V = \hat V_r
\end{equation}
is an isomorphism of $\hat{\mh H}_r$-modules.

\begin{lem}\label{lem:1.16}
$\mr{gl. \, dim} (\mh H (\mf t, W \rtimes \Gamma, k, \mb r, \natural )) \leq 
\mr{sup}_{r \in \C} \, \mr{gl. \, dim}(\hat{\mh H}_r).$
\end{lem}
\begin{proof}
By Lemma \ref{lem:1.11} we may assume that $\Gamma = \{1\}$, so that $\natural$ disappears.
We abbreviate $\mh H = \mh H (\mf t,W,k,\mb r)$. All the algebras in this proof are 
Noetherian, so by \cite[Proposition 4.1.5]{Wei} their global dimensions equal their
Tor-dimensions. We will use both the characterization in terms of Ext-groups and that
in terms of Tor-groups, whatever we find more convenient.

Let $V$ and $M$ be finitely generated $\mh H$-modules. By \eqref{eq:1.58}, exactness of
completion for finitely generated $Z(\mh H)$-modules and \cite[Corollary 3.2.10]{Wei}:
\begin{equation}\label{eq:1.30} 
Z(\hat{\mh H}_r) \otimes_{Z(\mh H)} \mr{Tor}_m^{\mh H} (V,M)  \cong
\mr{Tor}_m^{\hat{\mh H}_r} (\hat{V}_r, \hat{M}_r) .
\end{equation}
It is known, e.g. from \cite[Lemma 3]{KNS}, that $V$ and $M$ have projective resolutions 
consisting of free $Z(\mh H)$-modules of finite rank.
It follows that $\mr{Tor}_m^{\mh H} (V,M)$ is finitely generated as $Z (\mh H)$-module.
By \cite[Lemma 2.9]{SolTwist}, $\mr{Tor}_m^{\mh H} (V,M)$ is nonzero if and only if 
its formal completion with respect to some maximal ideal $I$ of $Z(\mh H)$ is nonzero. That 
happens if and only if \eqref{eq:1.30} is nonzero for the $r \in \C$ with $\mb r - r \in I$. 

For any $m \leq \mr{gl. \, dim}(\mh H)$ such $V,M$ exist, because finitely generated modules
suffice to detect the global dimension of a Noetherian ring \cite[\S 4.1]{Wei}. 
It follows that $\mr{gl. \, dim}(\hat{\mh H}_r) \geq m$ for the appropriate $r$.
\end{proof}

It remains to find a good upper bound for the global dimension of $\hat{\mh H}_r$.

\begin{thm}\label{thm:1.12}
\enuma{
\item For any $r \in \C$, the global dimension of $\hat{\mh H}_r$ equals $\dim_\C (\mf t) + 1$.
\item The global dimension of $\mh H (\mf t, W \rtimes \Gamma, k, 
\mb r, \natural)$ equals $\dim_\C (\mf t) + 1$.
}
\end{thm}
\begin{proof}(a)
By Lemma \ref{lem:1.11} it suffices to consider the cases with $\Gamma = \{1\}$. The crucial 
point of our proof is that the global dimension of the graded Hecke algebra 
\[
\mh H / (\mb r - r) = \mh H (\mf t,W,rk)
\] 
has already been computed, and equals $\dim_\C (\mf t)$ \cite[Theorem 5.3]{SolGHA}. 
For any $\mh H /(\mb r - r)$-module $V_1$, \cite[Theorem 4.3.1]{Wei} provides a comparison of 
projective dimensions
\begin{equation}\label{eq:1.67}
\mr{pd}_{\mh H}(V_1) \leq \mr{pd}_{\mh H / (\mb r - r)}(V_1) + \mr{pd}_{\mh H}(\mh H / (\mb r - r)) .
\end{equation}
From the short exact sequence 
\[
0 \to \mh H \xrightarrow{\mb r - r} \mh H \to \mh H / (\mb r - r) \to 0
\]
we see that $\mr{pd}_{\mh H}(\mh H / (\mb r - r)) = 1$. Hence
\begin{equation}\label{eq:1.32}
\mr{pd}_{\mh H}(V_1) \leq \mr{pd}_{\mh H / (\mb r - r)}(V_1) + 1 \leq \dim_\C (\mf t) + 1.
\end{equation}
In other words, $\mr{Tor}_m^{\mh H}(V_1,M) = 0$ for all $m > \dim_\C (\mf t) + 1$.

Let $V_2$ be an $\mh H$-module on which $(\mb r - r)^2$ acts as 0. In the short exact sequence
\[
0 \to (\mb r - r) V_2 \to V_2 \to V_2 / (\mb r - r) V_2 \to 0 ,
\]
$\mb r - r$ annihilates both the outer terms, so \eqref{eq:1.32} applies to them. Applying
$\mr{Tor}_*^{\mh H}(?,M)$ to this short exact sequence yields a long exact sequence, and taking
\eqref{eq:1.32} into account we see that $\mr{Tor}_m^{\mh H}(V_2,M) = 0$ for all 
$m > \dim_\C (\mf t) + 1$. 

This argument can be applied recursively, and then it shows that
\begin{equation}\label{eq:1.33}
\mr{Tor}_m^{\mh H}(V_n,M) = 0 \qquad \text{if } m > \dim_\C (\mf t) + 1 \text{ and }
(\mb r - r)^n V_n = 0 \text{ for some } n \in \N .
\end{equation}
Assume now that $V$ and $M$ are finitely generated $\hat{\mh H}_r$-modules. By \eqref{eq:1.61}
they are also finitely generated over $Z(\hat{\mh H}_r) = \widehat{Z(\mh H)}_r$, and therefore
\begin{equation}\label{eq:1.60}
V \cong \varprojlim_n V / I_r^n V \cong \varprojlim_n V / (\mb r - r)^n V .
\end{equation}
By \eqref{eq:1.30} and \eqref{eq:1.33} 
\begin{equation}\label{eq:1.34}
\mr{Tor}_m^{\hat{\mh H}_r} (V / (\mb r - r)^n V, M) = 0 \qquad \text{for }  
m > \dim_\C (\mf t) + 1 \text{ and } n \in \N .
\end{equation}
Let $P_* \to M$ be a resolution by free $\hat{\mh H}_r$-modules $P_i$ of finite rank $\mu_i$
(this is possible because $\hat{\mh H}_r$ is Noetherian). Then
\[
\mr{Tor}_m^{\hat{\mh H}_r} (V / (\mb r - r)^n V, M) = 
H_m \big( V / (\mb r - r)^n V \otimes_{\hat{\mh H}_r} \hat{\mh H}_r^{\mu_*}, d_* 
\big) = H_m \big( (V / (\mb r - r)^n V )^{\mu_*} , d_* \big) .
\]
Here the sequence of differential complexes $\big( (V / (\mb r - r)^n V )^{\mu_*} , d_* \big)$,
indexed by $n \in \N$, satisfies the Mittag--Leffler condition because the transition maps are 
surjective. By \eqref{eq:1.60} the inverse limit of the sequence is $(V^{\mu_*}, d_*)$, which 
computes $\mr{Tor}_m^{\hat{\mh H}_r} (V,M)$.
According to \cite[Theorem 3.5.8]{Wei} there is a short exact sequence
\[
0 \to \varprojlim_n \! {}^1 \mr{Tor}_{m+1}^{\hat{\mh H}_r} (V / (\mb r - r)^n V, M) \to
\mr{Tor}_m^{\hat{\mh H}_r} (V,M) \to 
\varprojlim_n \mr{Tor}_m^{\hat{\mh H}_r} (V / (\mb r - r)^n V, M) \to 0.
\]
For $m > \dim_\C (\mf t) + 1$, \eqref{eq:1.34} shows that both outer terms vanish, so 
$\mr{Tor}_m^{\hat{\mh H}_r} (V,M) = 0$ as well. Hence 
\begin{equation}\label{eq:1.59}
\text{gl. dim}(\hat{\mh H}_r) \leq \dim_\C (\mf t) + 1 ,
\end{equation}
which already suffices for part (b).
Consider the $\hat{\mh H}_r$-module 
\[
V_{\lambda,r} := \ind_{\mc O (\mf t \oplus \C)}^{\mh H} \C_{\lambda,r} =
\ind_{\widehat{\mc O (\mf t \oplus \C)}_r}^{\hat{\mh H}_r} \C_{\lambda,r}
\]
from \eqref{eq:1.35}. Analogous to \eqref{eq:1.65} and \eqref{eq:1.66}, one computes that
\[
\mr{Ext}^n_{\hat{\mh H}_r}(V_{\lambda,r},V_{\lambda,r}) \neq 0
\qquad \text{for } 0 \leq n \leq \dim_\C (\mf t) + 1 ,
\]  
which shows that \eqref{eq:1.59} is actually an equality.\\
(b) This follows from Lemmas \ref{lem:1.13} and \ref{lem:1.16} in combination with \eqref{eq:1.59}.
\end{proof}

\section{Equivariant sheaves and equivariant cohomology} 
\label{sec:cuspidal}

We follow the setup from \cite{Lus-Cusp1,Lus-Cusp2,AMS1,AMS2}. In these references a graded Hecke
algebra was associated to a cuspidal local system on a nilpotent orbit for a complex reductive
group, via equivariant cohomology. For future applications to Langlands parameters we deal not 
only with connected groups, but also with disconnected reductive groups $G$.

We work in the $G$-equivariant bounded derived category $\mc D^b_G (X)$, as in \cite{BeLu}, 
\cite[\S 1]{Lus-Cusp1} and \cite[\S 1]{Lus-Cusp2}. The formalism of \cite{BeLu} entails that 
(for non-discrete groups) this is not exactly the bounded derived category of the category of 
$G$-equivariant constructible sheaves on a $G$-variety $X$. Morphisms in 
$\mc D_G^b (X)$ are defined via a resolution of $X$ by $G$-varieties $Y$ as in \cite{BeLu}, and 
on each such $Y$ we use morphisms in a (non-equivariant) derived category of sheaves. In general,
checking that an object or a morphism belongs to $\mc D^b_G (X)$ can be reduced to two steps:
\begin{itemize}
\item show that it belongs to $\mc D^b_{G^\circ}(X)$,
\item show that the $G^\circ$-equivariant structure extends to a $G$-equivariant structure.
\end{itemize}
Typically the first step above is much harder than the second step, which is about abstract
group actions only.
The reason for this structure of $\mc D^b_G (X)$ is that from any $G^\circ$-resolution 
$Y$ of a $G$-variety $X$, one obtains a $G$-resolution $G \times^{G^\circ} Y$ of $X$, and those
resolutions suffice to study $\mc D_G^b (X)$ as constructed in \cite[\S 2]{BeLu}. This entails 
for instance that a morphism in $\mc D^b_G (X)$ is an isomorphism if and only it becomes an 
isomorphism in $\mc D^b_{G^\circ}(X)$.

Equivariant cohomology for objects of $\mc D^b_G (X)$ is defined via 
push-forward to a point, representing the result as a complex of sheaves on a 
classifying space $\mf B G$ for $G$ and then taking cohomology in $\mc D^b (\mf B G)$.
For more background we refer to \cite[Chapter 6]{Ach}.

We will use some notations and conventions from \cite{Lus-Cusp2}, in particular functors from or to 
$\mc D_G^b (X)$ are by default derived functors. Let $[n]$ be the functor that shifts degrees by $n$. 
For objects $A,B$ of $\mc D^b_G (X)$ and $n \in \Z$, we write 
\[
\Hom^n_{\mc D^b_G (X)} (A, B) = \Hom_{\mc D^b_G (X)} (A, B[n]) .
\]
In the case $A = B$ one obtains the graded algebra
\[
\mr{End}^*_{\mc D^b_G (X)}(A) = \bigoplus\nolimits_{n \in \Z} \Hom^n_{\mc D^b_G (X)} (A, A) .
\]

\subsection{Geometric construction of graded Hecke algebras} \
\label{par:geomConst}

Recall from \cite{AMS1} that a quasi-Levi subgroup of $G$ is a group of the form
$M = Z_G (Z(L)^\circ)$, where $L$ is a Levi subgroup of $G^\circ$. Thus
$Z (M)^\circ = Z(L)^\circ$ and $M \longleftrightarrow L = M^\circ$ is a bijection
between the quasi-Levi subgroups of $G$ and the Levi subgroups of $G^\circ$.

\begin{defn}\label{def:2.5}
A cuspidal quasi-support for $G$ is a triple $(M,\cC_v^M,q\cE)$ where:
\begin{itemize}
\item $M$ is a quasi-Levi subgroup of $G$;
\item $\cC_v^M$ is the $\Ad(M)$-orbit of a nilpotent element $v \in \mf m = \mathrm{Lie}(M)$.
\item $q \cE$ is a $M$-equivariant cuspidal local system on $\cC_v^M$, i.e. as an
$M^\circ$-equivariant local system it is a direct sum of cuspidal local systems.
\end{itemize}
\end{defn}
We denote the $G$-conjugacy class of $(M,\cC_v^M,q \cE)$ by $[M,\cC_v^M,q\cE]_G$. 
With this cuspidal quasi-support we associate the groups
\begin{equation}\label{eq:3.15}
N_G (q \cE) = \mathrm{Stab}_{N_G (M)}(q \cE) \quad \text{and} \quad
W_{q \cE} = N_G (q \cE) / M .
\end{equation}
Let $\mf g_N$ be the variety of nilpotent elements in the Lie algebra $\mf g = \mr{Lie}(G)$.
Cuspidal quasi-supports are useful to partition the set of $G$-equivariant local 
systems on Ad$(G)$-orbits in $\mf g_N$. Let $\cE$ be an irreducible constituent 
of $q\cE$ as $M^\circ$-equivariant local system on $\cC_v^M$ (which by the cuspidality of
$\cE$ equals the Ad$(M^\circ)$-orbit of $v$). Then
\[
W_\cE^\circ := N_{G^\circ}(M^\circ) / M^\circ \cong N_{G^\circ} (M^\circ) M / M
\]
is a subgroup of $W_{q\cE}$. It is normal because $G^\circ$ is normal in $G$.
Write $T = Z(M)^\circ$ and $\mf t = \mr{Lie}(T)$. It is known from \cite[Proposition 2.2]{Lus-Cusp1}
that $R(G^\circ,T) \subset \mf t^\vee$ is a root system with Weyl goup $W_\cE^\circ$.

Let $P^\circ$ be a parabolic subgroup of $G^\circ$ with Levi decomposition
$P^\circ = M^\circ \ltimes U$. The definition of $M$ entails that it normalizes $U$, so
\[
P := M \ltimes U
\]
is a again a group, a ``quasi-parabolic" subgroup of $G$. Since $W_\cE^\circ = W(R(G^\circ,T))$, 
all possible $P$ are conjugate by elements of $N_{G^\circ}(M^\circ)$. We put
\begin{align*}
& N_G (P,q\cE) = N_G (P,M) \cap N_G (q\cE) , \\
& \Gamma_{q\cE} = N_G (P,q\cE) / M .
\end{align*}
The same proof as for \cite[Lemma 2.1.b]{AMS2} shows that 
\begin{equation}\label{eq:3.1}
W_{q\cE} = W_\cE^\circ \rtimes \Gamma_{q\cE} .
\end{equation}
The $W_{q\cE}$-action on $T$ gives rise to an action of $W_{q\mc E}$ on $\mc O (\mf t) = S(\mf t^\vee)$.

We specify our parameters $k (\alpha)$. For $\alpha$ in the root system 
$R(G^\circ,T)$, let $\mf g_\alpha \subset \mf g$ be the associated eigenspace for the $T$-action.
Let $\Delta_P$ be the set of roots in $R(G^\circ,T)$ which are simple with respect to $P$.
For $\alpha \in \Delta_P$ we define $k (\alpha) \in \Z_{\geq 2}$ by
\begin{equation}\label{eq:1.5}
\begin{array}{ccc}
\ad (v)^{k (\alpha) - 2} : & \mf g_{\alpha} \oplus \mf g_{2 \alpha} \to \mf g_{\alpha} 
\oplus \mf g_{2 \alpha} & \text{ is nonzero,} \\
\ad (v)^{k (\alpha) - 1} : & \mf g_{\alpha} \oplus \mf g_{2 \alpha} \to \mf g_{\alpha} 
\oplus \mf g_{2 \alpha} & \text{ is zero.}
\end{array}
\end{equation}
Then $(k (\alpha) )_{\alpha \in \Delta_P}$ extends to a $W_{q\cE}$-invariant function $k : 
R(G^\circ,T)_{\mathrm{red}} \to \C$, where the subscript ``red" indicates the set of reduced (or
indivisible) roots. Let $\natural : (W_{q\cE} / W_\cE^\circ)^2 \to \C^\times$ be a 2-cocycle (to be 
specified later). To these data we associate the twisted graded Hecke algebra 
$\mh H (\mf t, W_{q\cE},k,\mb{r},\natural)$, as in Proposition \ref{prop:1.1}. 

To make the connection of the above twisted graded Hecke algebra with the cuspidal local system 
$q\cE$ complete, we involve the geometry of $G$ and $\mf g$. Write
\[
\mf t_\reg = \{ x \in \mf t : Z_{\mf g}(x) = \mf l \} \quad \text{and} \quad
\mf g_{RS} = \Ad (G) (\cC_v^M \oplus \mf t_\reg \oplus \mf u) .
\]
Consider the varieties
\begin{align*}
& \dot{\mf g} = \{ (X,gP) \in \mf g \times G/P : 
\Ad (g^{-1}) X \in \cC_v^M \oplus \mf t \oplus \mf u \} , \\
& \dot{\mf g}^{\circ} = \{ (X,gP) \in \mf g \times G^{\circ}/P^\circ : 
\Ad (g^{-1}) X \in \cC_v^M \oplus \mf t \oplus \mf u \}, \\
& \dot{\mf g}_{RS} = \dot{\mf g} \cap (\mf g_{RS} \times G/P) ,\\
& \dot{\mf g}_N = \dot{\mf g} \cap (\mf g_N \times G/P) .
\end{align*}
We let $G \times \C^\times$ act on these varieties by
\[
(g_1,\lambda) \cdot (X,gP) = (\lambda^{-2} \Ad (g_1) X, g_1 g P) .
\] 
By \cite[Proposition 4.2]{Lus-Cusp1} there is a natural isomorphism
\begin{equation}\label{eq:1.1}
H^*_{G \times \C^\times} (\dot{\mf g}) \cong \mc O (\mf t) \otimes_\C \C [\mb r] .
\end{equation}
The same calculation (omitting $\mf t$ from the definition of $\dot{\mf g}$) shows that
\begin{equation}\label{eq:1.50}
H^*_{G \times \C^\times} (\dot{\mf g}_N) \cong \mc O (\mf t) \otimes_\C \C [\mb r] .
\end{equation}
Consider the maps
\begin{equation}\label{eq:1.2}
\begin{aligned}
& \cC_v^M \xleftarrow{f_1} \{ (X,g) \in \mf g \times G : \Ad (g^{-1}) X \in 
\cC_v^M \oplus \mf t \oplus \mf u\} \xrightarrow{f_2} \dot{\mf g} , \\
& f_1 (X,g) = \mathrm{pr}_{\cC_v^M}(\Ad (g^{-1}) X) , \hspace{2cm} f_2 (X,g) = (X,gP) .
\end{aligned}
\end{equation}
The group $G \times \C^\times \times P$ acts on $\{ (X,g) \in \mf g \times G : \Ad (g^{-1}) X \in 
\cC_v^M \oplus \mf t \oplus \mf u\}$ by 
\[
(g_1,\lambda,p) \cdot (X,g) = ( \lambda^{-2} \Ad (g_1)X ,g_1 g p).
\]
Notice that the local system $q\cE$ on $\cC_v^M$ is $M \times \C^\times$-equivariant, because 
$\C^\times$ is connected and stabilizes 
nilpotent $M$-orbits. Further $f_1$ is constant on $G$-orbits, so $f_1^* q\cE$ is naturally a
$G \times \C^\times$-equivariant local system. Let $\dot{q\cE}$ be the unique $G \times 
\C^\times $-equivariant local system on $\dot{\mf g}$ such that $f_2^* \dot{q\cE} = f_1^* q\cE$. 
Let $\mr{pr}_1 : \dot{\mf g} \to \mf g$ be the projection on the first coordinate. 
When $G$ is connected, Lusztig \cite{Lus-Cusp2} has constructed graded Hecke algebras from
\[
K := \pr_{1,!} \dot{q\cE} \quad \in \mc D^b_{G \times \C^\times} (\mf g ) .
\]
For our purposes the pullback $K_N$ of $K$ to the nilpotent variety $\mf g_N \subset \mf g$ will be
more suitable than $K$ itself.

We can relate $\dot{\mf g}$ and $K$ to their versions for $G^\circ$, as follows. Write
\begin{equation}\label{eq:1.13}
G = \bigsqcup_{\gamma \in N_G (P,M) / M} G^\circ \gamma M / M
\quad \text{and} \quad G/P = \bigsqcup_{\gamma \in N_G (P,M) / M} G^\circ \gamma P / P .
\end{equation}
Then we can decompose
\begin{multline}\label{eq:1.21}
\dot{\mf g} \; = \; \bigsqcup\nolimits_{\gamma \in N_G (P,M) / M} \dot{\mf g}^\circ_\gamma \; := \; \\  
\hspace{-28mm} \bigsqcup\nolimits_{\gamma \in N_G (P,M) / M} 
\big\{ (X, g \gamma P) \in \dot{\mf g} : g \in G^\circ \big\} \cong \\
\bigsqcup_{\gamma \in N_G (P,M) / M} \hspace{-4mm} \big\{ (X,g \gamma P^\circ \gamma^{-1}) : 
X \in \mf g, g \in G^\circ / \gamma P^\circ \gamma^{-1}, \Ad (g^{-1}) X \in 
\Ad (\gamma) (\cC_v^M + \mf t + \mf u) \big\} .
\end{multline}
Here each term $\dot{\mf g}^\circ_\gamma$ is a twisted version of $\dot{\mf g}^\circ$. Consequently
$K$ is a direct sum of $G^\circ \times \C^\times$-equivariant subobjects, each of which is a twist
of the $K$ for $(G^\circ M, \cC_v^M, q\cE)$ by an element of $N_G (P,M) / M$.

Let $\dot{q \cE}_{RS}$ be the pullback of $\dot{q\cE}$ to $\dot{\mf g}_{RS}$. 
Let $\IC_{G \times \C^\times} (\mf g \times G/P,\dot{q \cE}_{RS})$ be the equivariant intersection 
cohomology complex determined by $\dot{q \cE}_{RS}$. This is just the usual intersection cohomology
complex in $\mc D^b (\mf g \times G/P)$, but with its $G \times \C^\times$-equivariant structure.
It is supported on the closure of $\dot{\mf g}_{RS}$ in $\mf g \times G/P$, a domain on which 
$\mr{pr}_1$ becomes proper. The map
\[
\mathrm{pr}_{1,RS} : \dot{\mf g}_{RS} \to \mf g_{RS} 
\]
is a fibration with fibre $N_G (M) / M$, so $(\pr_{1,RS})_! \dot{q\cE}_{RS}$ is a local 
system on $\mf g_{RS}$. It is shown in \cite[\S 3.4.a]{Lus-Cusp1} and 
\cite[Proposition 7.12.c]{Lus-Cusp2} that 
\begin{equation}\label{eq:1.4}
K \cong \mr{pr}_{1,!} \, \IC_{G \times \C^\times} (\mf g\times G/P,\dot{q \cE}_{RS}) \cong 
\IC_{G \times \C^\times}(\mf g, (\pr_{1,RS})_! \dot{q\cE}_{RS}) .
\end{equation}
Although in these references $G$ is assumed to be connected and $G \times \C^\times$-equivariance
is not mentioned, the entire argument in \cite[\S 3.4]{Lus-Cusp2} can be placed in the
appropriate $G \times \C^\times$-equivariant derived categories. The right hand side of \eqref{eq:1.4} 
shows that $K$ is a direct sum of simple perverse sheaves with support $\overline{\mf g_{RS}}$. 
Further, \cite[Lemma 5.4]{AMS1} and \cite[Proposition 7.14]{Lus-Cusp2} say that
\begin{equation}\label{eq:1.10}
\C[W_{q \cE},\natural_{q \cE}] \cong
\End^0_{\mc D^b_{G \times \C^\times}(\mf g_{RS})} \big( (\mathrm{pr}_{1,RS})_! \dot{q \cE}_{RS} \big) 
\cong \End^0_{\mc D^b_{G \times \C^\times}(\mf g)} ( K ) ,
\end{equation}
where $\natural_{q\cE} : (W_{q\cE} / W_\cE^\circ )^2 \to \C^\times$ is a suitable 2-cocycle. 
As in \cite[(8)]{AMS2}, we record the subalgebra of endomorphisms that stabilize Lie$(P)$: 
\begin{equation}\label{eq:3.60}
\End^0_{\mc D^b_{G \times \C^\times}(\mf g)} \big( \mathrm{pr}_{1,!} \dot{q \cE} \big)_P 
\cong \C[\Gamma_{q \cE},\natural_{q \cE}] .
\end{equation}
Now we associate to $(M,\cC_v^M,q\cE)$ the twisted graded Hecke algebra
\[
\mh H (G,M,q\cE) := \mh H (\mf t, W_{q\cE},k ,\mb r,\natural_{q \cE}) ,
\]
where the parameters $k (\alpha)$ come from \eqref{eq:1.5}. As in \cite[Lemma 2.8]{AMS2}, we
can regard it as 
\[
\mh H (G,M,q\cE) = \mh H (\mf t, W_\cE^\circ,k ,\mb r) \rtimes 
\End^0_{\mc D^b_{G \times \C^\times}(\mf g)} \big( \mathrm{pr}_{1,!} \dot{q \cE} \big)_P ,
\]
and then it depends canonically on $(G,M,q\cE)$. We note that \eqref{eq:3.1} implies
\begin{equation}\label{eq:3.13}
\mh H (G^\circ N_G (P,q\cE),M,q\cE) = \mh H (G,M,q\cE) .
\end{equation}
There is also a purely geometric realization of this algebra. For $\Ad (G) \times \C^\times$-stable 
subvarieties $\cV$ of $\mf g$, we define, as in \cite[\S 3]{Lus-Cusp1},
\begin{equation}\label{eq:1.27}
\begin{aligned}
& \dot{\cV} = \{ (X,gP) \in \dot{\mf g} : X \in \cV \} , \\
& \ddot{\cV} = \{ (X,gP,g' P) : (X,g P) \in \dot{\cV}, (X,g' P) \in \dot{\cV} \} .
\end{aligned}
\end{equation}
Let $q\cE^\vee$ be the dual equivariant local system on $\cC^M_v$, which is also cuspidal. 
It gives rise to $K^\vee = \pr_{1,!} \dot{q\cE}^\vee$, another equivariant intersection 
cohomology complex on $\mf g$. 
The two projections $\pi_{12}, \pi_{13} : \ddot{\cV} \to \dot{\cV}$ give rise 
to a $G \times \C^\times$-equivariant local system 
\[
\ddot{q\cE} = (\pi_{12} \times \pi_{13})^* \big( \dot{q\cE} \boxtimes \dot{q\cE}^\vee \big) 
\quad \text{on} \quad \ddot{\cV},
\]
which carries a natural action of \eqref{eq:1.1}. 
As in \cite{Lus-Cusp1}, the action of $\C [W_{q\cE},\natural_{q\cE}^{-1}]$ on $K^\vee$ leads to 
\begin{equation}\label{eq:1.3}
\text{actions of } \C [W_{q\cE},\natural_{q\cE}] \otimes \C [W_{q\cE},\natural_{q\cE}^{-1}] \text{ on } 
\ddot{q\cE} \text{ and on } H_j^{G^\circ \times \C^\times} \big( \ddot{\cV},\ddot{q\cE} \big) .
\end{equation}
In \cite{Lus-Cusp1} and \cite[\S 2]{AMS2} a left action $\Delta$ and a right action $\Delta'$ of 
$\mh H (G,M,q\cE)$ on $H_*^{G \times \C^\times}(\ddot{\mf g_N}, \ddot{q\cE})$ are constructed. 

\begin{thm}\label{thm:1.2}
\enuma{
\item The actions $\Delta$ and $\Delta'$ identify $H_*^{G \times \C^\times}(\ddot{\mf g}, 
\ddot{q\cE})$ and \\ $H_*^{G \times \C^\times}(\ddot{\mf g_N}, \ddot{q\cE})$ with the biregular 
representation of $\mh H (G,M,q\cE)$. 
\item Methods from equivariant cohomology provide natural isomorphisms of graded vector spaces
\[
\begin{array}{lll}
\End^*_{\mc D^b_{G \times \C^\times}(\mf g)}(K) & \cong & 
H_*^{G \times \C^\times}(\ddot{\mf g}, \ddot{q\cE}) ,\\
\End^*_{\mc D^b_{G \times \C^\times}(\mf g_N)}(K_N) & \cong & 
H_*^{G \times \C^\times}(\ddot{\mf g_N}, \ddot{q\cE}) .
\end{array}
\]
\item Parts (a) and (b) induce canonical isomorphisms of graded algebras
\[
\mh H (G,M,q\cE) \to \End^*_{\mc D^b_{G \times \C^\times}(\mf g)}(K) \to
\End^*_{\mc D^b_{G \times \C^\times}(\mf g_N)}(K_N) .
\]
}
\end{thm}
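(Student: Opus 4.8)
The plan is to treat the two isomorphisms separately: first realize $\mh H(G,M,q\cE)$ as $\End^*_{\mc D^b_{G \times \C^\times}(\mf g)}(K)$ via Lusztig's convolution construction (extended to disconnected $G$ as in \cite{AMS2}), and then show that restriction along the closed inclusion $j\colon\mf g_N\hookrightarrow\mf g$ induces an isomorphism onto $\End^*_{\mc D^b_{G \times \C^\times}(\mf g_N)}(K_N)$.

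For the first isomorphism I would follow \cite{Lus-Cusp1,Lus-Cusp2,AMS2}. Part (b) for $K$ is the standard identification of a Yoneda algebra with a convolution algebra: using the proper model $K\cong\pr_{1,!}\IC_{G \times \C^\times}(\mf g\times G/P,\dot{q\cE}_{RS})$ of \eqref{eq:1.4}, proper base change and Verdier duality turn $\Hom^*(K,K)$ into the $G \times \C^\times$-equivariant Borel--Moore homology of $\overline{\dot{\mf g}_{RS}}\times_{\mf g}\overline{\dot{\mf g}_{RS}}$, and the standard dimension estimate (the locus over $\mf g\setminus\mf g_{RS}$ is negligible) rewrites this as $H_*^{G \times \C^\times}(\ddot{\mf g},\ddot{q\cE})$. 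Lusztig's convolution operators, together with the computation of this homology as a free module of rank $|W_{q\cE}|$ over $H^*_{G \times \C^\times}(\dot{\mf g})=\mc O(\mf t)\otimes\C[\mb r]$ obtained by localizing to $\dot{\mf g}_{RS}$, give part (a) for $\ddot{\mf g}$: under these identifications $\Delta$ and $\Delta'$ become left and right multiplication in $\mh H(G,M,q\cE)$, so the homology is the biregular representation, and after the customary degree shift the homological grading matches that of $\mh H$. For disconnected $G$ one feeds in the decomposition \eqref{eq:1.13}--\eqref{eq:1.21}: $\dot{\mf g}$ is a disjoint union of twisted copies of $\dot{\mf g}^\circ$, $K$ a corresponding sum, and $\ddot{\mf g}$ decomposes accordingly, so the crossed-product structure $\mh H(\mf t,W_\cE^\circ,k,\mb r)\rtimes\C[\Gamma_{q\cE},\natural_{q\cE}]$ emerges as in \cite[\S 4]{AMS2} (where \eqref{eq:1.10} and \eqref{eq:3.60} are proved), and it equals $\mh H(G,M,q\cE)$ by \eqref{eq:3.1}.

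For the passage to $\mf g_N$, proper base change along \eqref{eq:1.4} identifies $K_N=j^*K$ with $(\pr_{1,N})_!$ of the restriction of $\IC_{G \times \C^\times}(\mf g\times G/P,\dot{q\cE}_{RS})$ to the closed subvariety $\overline{\dot{\mf g}_{RS}}\cap(\mf g_N\times G/P)$, over which the first projection is proper; hence the convolution$=$Yoneda argument of the previous paragraph applies verbatim over $\mf g_N$ and gives $\End^*_{\mc D^b_{G \times \C^\times}(\mf g_N)}(K_N)\cong H_*^{G \times \C^\times}(\ddot{\mf g_N},\ddot{q\cE})$, which is part (b) for $K_N$. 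Since $\Delta,\Delta'$ on $H_*^{G \times \C^\times}(\ddot{\mf g_N},\ddot{q\cE})$ are defined by the same convolution diagrams as on $\ddot{\mf g}$, the restriction map $H_*^{G \times \C^\times}(\ddot{\mf g},\ddot{q\cE})\to H_*^{G \times \C^\times}(\ddot{\mf g_N},\ddot{q\cE})$ -- equivalently the ring homomorphism $j^*\colon\End^*(K)\to\End^*(K_N)$ -- is automatically $\mh H(G,M,q\cE)$-bilinear. It therefore only remains to prove this comparison map is bijective; bijectivity then transports part (a) from $\ddot{\mf g}$ to $\ddot{\mf g_N}$ and gives the chain of graded algebra isomorphisms in (c).

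Bijectivity of the comparison map is the crux. I would deduce it from the ``semisimple part'' map $\sigma\colon\ddot{\mf g}\to\mf t$, whose fibre over $0$ is $\ddot{\mf g_N}$ and which is equivariant for the $\C^\times\subset G\times\C^\times$ acting on $\mf g$, hence on $\mf t$, with weight $-2$, so that $\mf t$ is contracted onto $0$. The claim is that the resulting Gysin/specialization map identifies $H_*^{G \times \C^\times}(\ddot{\mf g},\ddot{q\cE})$ with $H_*^{G \times \C^\times}(\ddot{\mf g_N},\ddot{q\cE})$ up to the degree shift $2\dim_\C Z(M^\circ)$ accounting for $\dim\ddot{\mf g}-\dim\ddot{\mf g_N}$ -- the same mechanism by which \eqref{eq:1.50} follows from \eqref{eq:1.1}, now applied to the doubled varieties -- and under the reindexing that makes Borel--Moore homology into $\End^*$ this shift is absorbed, so $j^*$ becomes a graded isomorphism. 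The main obstacle is making this last point rigorous: the total space $\ddot{\mf g}$ is \emph{not} itself $\C^\times$-contractible (the $\C^\times$-limit of a point escapes $\ddot{\mf g}$ once $v\neq0$), so one cannot simply apply the contraction lemma to $\ddot{\mf g}$. I would handle it in one of two ways: either adapt Lusztig's inductive fibration analysis of $\dot{\mf g}$ from \cite{Lus-Cusp1,Lus-Cusp2} -- which is what proves \eqref{eq:1.1} and the freeness statement -- directly to the nilpotent varieties $\dot{\mf g}_N$ and $\ddot{\mf g_N}$; or use the contraction lemma for the genuinely contracting $\C^\times$-action on $\mf g$ itself to express both $\End^*(K)$ and $\End^*(K_N)$ as equivariant stalks at $0\in\mf g_N$ of the internal ${\mc Hom}$-complexes ${\mc Hom}(K,K)$ and ${\mc Hom}(K_N,K_N)$, and compare those stalks there.
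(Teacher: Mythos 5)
Your outline of the $\mf g$-side is essentially the paper's: the identification $\End^*_{\mc D^b_{G\times\C^\times}(\mf g)}(K)\cong H_*^{G\times\C^\times}(\ddot{\mf g},\ddot{q\cE})$ via the proper model \eqref{eq:1.4}, the convolution/localization computation making this the biregular representation, and the reduction of the disconnected case to $G^\circ$ via \eqref{eq:1.13}--\eqref{eq:1.21} and \cite{AMS2} all match the sources the paper invokes (the beginning of the proof of \cite[Theorem 8.11]{Lus-Cusp2}, plus \cite[Corollary 2.9, \S 4]{AMS2}). The same goes for your base-change argument giving part (b) for $K_N$.

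The genuine gap is exactly the step you yourself flag as the crux: the bijectivity of the comparison map $\End^*(K)\to\End^*(K_N)$, equivalently the restriction $H_*^{G\times\C^\times}(\ddot{\mf g},\ddot{q\cE})\to H_*^{G\times\C^\times}(\ddot{\mf g_N},\ddot{q\cE})$. Your primary mechanism (a Gysin/specialization isomorphism along the semisimple-part map $\sigma\colon\ddot{\mf g}\to\mf t$, driven by the contracting $\C^\times$) does not apply as stated -- as you note, $\ddot{\mf g}$ is not stable under the contraction when $v\neq 0$ -- and neither of your two fallback strategies is carried out, so part (a) for $\ddot{\mf g_N}$ and hence part (c) remain unproven in your write-up. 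The paper avoids this difficulty entirely: it does not deduce the nilpotent statement from the one on $\mf g$ by a geometric comparison, but quotes the fact that $H_*^{G\times\C^\times}(\ddot{\mf g_N},\ddot{q\cE})$ with the actions $\Delta,\Delta'$ is \emph{independently} identified with the biregular representation of $\mh H(G,M,q\cE)$ (\cite[Corollary 6.4]{Lus-Cusp1} for connected $G$, extended in \cite[Corollary 2.9 and \S 4]{AMS2}); Lusztig's inductive fibration analysis there is precisely your ``option (i)'', already available in the literature rather than something still to be adapted. Once both homologies are known to be the biregular representation, the restriction map -- which, as you correctly observe, is an $\mh H$-bimodule map by functoriality and sends the identity to the identity -- is automatically an isomorphism, which is how the paper links the two algebra isomorphisms in part (c). So your difficulty is an artifact of the chosen route; to close the gap you should either invoke the nilpotent-cone result of \cite{Lus-Cusp1,AMS2} directly (after which your bilinearity remark finishes (c)), or actually execute the inductive computation for $\dot{\mf g}_N$ and $\ddot{\mf g_N}$ rather than leaving it as a proposal.
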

\begin{proof}
(a) When $G$ is connected, this is shown for $\ddot{\mf g_N}$ in \cite[Corollary 6.4]{Lus-Cusp1} 
and for $\ddot{\mf g}$ in the proof of \cite[Theorem 8.11]{Lus-Cusp2}, based on \cite{Lus-Cusp1}.
In \cite[Corollary 2.9 and \S 4]{AMS2} both are generalized to possibly disconnected $G$. \\
(b) For $(\mf g , K)$ with $G$ connected this is the beginning of the proof of 
\cite[Theorem 8.11]{Lus-Cusp2}. The same argument applies when $G$ is disconnected, and with
$(\mf g_N, K_N)$ instead of $(\mf g,K)$.\\
(c) In \cite[Theorem 8.11]{Lus-Cusp2} the first isomorphism is shown when $G$ is connected. 
Using parts (a,b) the same argument applies when $G$ is disconnected. Similarly we obtain
\[
\mh H (G,M,q\cE) \cong \End^*_{\mc D^b_{G \times \C^\times}(\mf g_N)}(K_N) .
\] 
These two graded algebra isomorphisms are linked via parts (a,b) and functoriality for the 
inclusion $\mf g_N \to \mf g$.
\end{proof}

\subsection{Semisimplicity of some complexes of sheaves} \
\label{par:semisimplicity}

For an alternative construction of $\dot{q\cE}$ and $K$, we consider the isomorphism of 
$G \times \C^\times$-varieties
\begin{equation}\label{eq:1.38}
\begin{array}{ccc}
G \times^P (\cC_v^M \oplus \mf t \oplus \mf u) & \to & \dot{\mf g} \\
(g,X) & \mapsto & (\mr{Ad}(g) X, gP) 
\end{array}.
\end{equation}
We note that the middle term in \eqref{eq:1.2} is isomorphic to 
$G \times ( \cC_v^M \oplus \mf t \oplus u )$ via the map 
$(X,g) \mapsto (g,\mr{Ad}(g^{-1})X)$. In these terms, \eqref{eq:1.2} becomes
\begin{equation}\label{eq:1.51}
\cC_v^M \xleftarrow{f'_1} G \times ( \cC_v^M \oplus \mf t \oplus \mf u ) \xrightarrow{f'_2} 
G \times^P (\cC_v^M \oplus \mf t \oplus \mf u),
\end{equation}
with the natural maps $f'_1$ and $f'_2$. We get $\dot{q\cE}$ as $G \times \C^\times$-equivariant 
local system on $G \times^P (\cC_v^M \oplus \mf t \oplus \mf u)$, satisfying $f^{'*}_2 \dot{q\cE} 
= f^{'*}_1 q\cE$. In this setup $\mr{pr}_1$ is replaced by 
\begin{equation}\label{eq:1.40}
\begin{array}{cccc}
\mu : & G \times^P (\cC_v^M \oplus \mf t \oplus \mf u) & \to & \mf g \\
 & (g,X) & \mapsto & \mr{Ad}(g)X 
\end{array}
\end{equation}
and then
\begin{equation}\label{eq:1.46}
K = \mu_! \, \dot{q\cE} .
\end{equation}
Recall that we defined $K_N$ as the pullback $K_N$ of $K$ to the variety $\mf g_N$, and that 
$K$ is a semisimple complex (that is, isomorphic to a direct sum of simple perverse sheaves,
maybe with degree shifts). We will prove that $K_N$ is also semisimple complex of sheaves. 
We write
\[
\dot{\mf g}_N = \dot{\mf g} \cap (\mf g_N \times G/P) .
\]
The maps \eqref{eq:1.2} restrict to
\begin{equation}\label{eq:1.36}
\cC_v^M \xleftarrow{f_{1,N}} \{ (X,g) \in \mf g_N \times G : \Ad (g^{-1}) X \in 
\cC_v^M \oplus \mf u\} \xrightarrow{f_{2,N}} \dot{\mf g}_N ,
\end{equation}
which allows us to define a local system $\dot{q\cE}_N$ on $\dot{\mf g}_N$ by 
$f_{2,N}^* \dot{q\cE} = f_{1,N}^* q\cE_N$. Then $\dot{q\cE}_N$ is the pullback of $\dot{q\cE}$ to
$\mf g_N$, because $f_{1,N}^* q\cE_N$ is the pullback of $f_1^* q\cE$. Let $\mr{pr}_{1,N}$ be
the restriction of $\mr{pr}_1$ to $\dot{\mf g}_N$. From the Cartesian diagram
\begin{equation}\label{eq:1.48}
\xymatrix{
\dot{\mf g}_N \ar[r] \ar[d]^{\mr{pr}_{1,N}} & \dot{\mf g} \ar[d]^{\mr{pr}_1} \\
\mf g_N \ar[r] & \mf g
}
\end{equation}
we see with base change \cite[Theorem 3.4.3]{BeLu} that 
\begin{equation}\label{eq:1.37}
\mr{pr}_{1,N,!} \, \dot{q\cE}_N \text{ equals the pullback } K_N \text{ of } K \text{ to } \mf g_N .
\end{equation}

\begin{prop}\label{prop:1.14}
There is a natural isomorphism
\[
K_N \cong \mr{pr}_{1,N,!} \, \IC_{G \times \C^\times} (\mf g_N \times G/P, \dot{q\cE}_N) .
\]
\end{prop}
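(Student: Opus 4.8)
The plan is to follow the proof of the analogous statement over $\mf g$, namely \eqref{eq:1.4} (which is \cite[Proposition 7.12]{Lus-Cusp2}), the key input there being the cleanness of the cuspidal local system $q\cE$. First I would describe $\dot{\mf g}_N$ concretely. Under the isomorphism \eqref{eq:1.38}, a point $(g,X)$ with $X \in \cC_v^M \oplus \mf t \oplus \mf u$ lies over the nilpotent cone precisely when $X$ is nilpotent; since $\mf t = \mr{Lie}(Z(M)^\circ)$ consists of semisimple elements central in $\mf m$, the $\mf t$-component of such a nilpotent $X$ must vanish, being the semisimple part of the image of $X$ in $\mf m$. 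Hence $\dot{\mf g}_N \cong G \times_P (\cC_v^M \oplus \mf u)$, a fibre bundle over $G/P$ with smooth fibre $\cC_v^M \oplus \mf u$. In particular $\dot{\mf g}_N$ is smooth, it is open and dense in its closure, and that closure is $\overline{\dot{\mf g}_N} = G \times_P (\overline{\cC_v^M} \oplus \mf u)$ inside $\mf g_N \times G/P$. Therefore $\IC_{G \times \C^\times}(\mf g_N \times G/P, \dot{q\cE}_N)$ is by definition the intersection complex on $\overline{\dot{\mf g}_N}$ extending $\dot{q\cE}_N$ from $\dot{\mf g}_N$, and it remains to compute its pushforward to $\mf g_N$.

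The heart of the argument is to show that this intersection complex is \emph{clean}, i.e.\ isomorphic to $j_! \dot{q\cE}_N$ for $j \colon \dot{\mf g}_N \hookrightarrow \overline{\dot{\mf g}_N}$. This I would verify Zariski-locally on $G/P$: over an open $U$ trivialising the $P$-bundle $G \to G/P$, the variety $\overline{\dot{\mf g}_N}$ becomes $U \times \overline{\cC_v^M} \times \mf u$, and, by the $M$-equivariance of $q\cE$, the local system $\dot{q\cE}_N$ becomes $\mathbf 1 \boxtimes q\cE \boxtimes \mathbf 1$ on $U \times \cC_v^M \times \mf u$. Since forming intersection complexes commutes with external product by a constant sheaf on a smooth factor, the local model of $\IC_{G \times \C^\times}(\mf g_N \times G/P, \dot{q\cE}_N)$ is $\mathbf 1_U \boxtimes \IC(\overline{\cC_v^M}, q\cE) \boxtimes \mathbf 1_{\mf u}$, up to a degree shift. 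Now $\IC(\overline{\cC_v^M}, q\cE)$ is the extension by zero of $q\cE$ by the cleanness of cuspidal local systems \cite{Lus-Cusp1}; for disconnected $M$ this follows from the connected case, since $q\cE$ restricts on $M^\circ$ to a direct sum of cuspidal local systems on the $\mr{Ad}(M^\circ)$-orbits making up $\cC_v^M$, each of which is clean. Hence the local model, and therefore $\IC_{G \times \C^\times}(\mf g_N \times G/P, \dot{q\cE}_N)$ itself, is an extension by zero.

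With cleanness established the conclusion is formal. Let $\overline{\pr}_{1,N} \colon \overline{\dot{\mf g}_N} \to \mf g_N$ be the restriction of the first projection; it is proper, being the composite of the closed embedding $\overline{\dot{\mf g}_N} \hookrightarrow \mf g_N \times G/P$ with the projection onto $\mf g_N$, and $\overline{\pr}_{1,N} \circ j = \pr_{1,N}$. Hence
\[
\overline{\pr}_{1,N,!} \, \IC_{G \times \C^\times}(\mf g_N \times G/P, \dot{q\cE}_N) \;\cong\; \overline{\pr}_{1,N,!} \, j_! \, \dot{q\cE}_N \;\cong\; \pr_{1,N,!} \, \dot{q\cE}_N \;=\; K_N ,
\]
the last equality being \eqref{eq:1.37}. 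All varieties, morphisms and local systems carry natural $G \times \C^\times$-equivariant structures and all the isomorphisms above are equivariant, so the computation takes place in $\mc D^b_{G \times \C^\times}(\mf g_N)$, as required. I expect the only real obstacle to be the cleanness step: although formally parallel to the case of $\mf g$, it requires the explicit descriptions of $\dot{\mf g}_N$ and $\overline{\dot{\mf g}_N}$, the local product form of $\dot{q\cE}_N$, and the reduction of the cuspidal cleanness input to connected groups; granting those, the rest is routine.
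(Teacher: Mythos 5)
Your proposal is correct and follows essentially the same route as the paper: identify $\dot{\mf g}_N \cong G \times_P (\cC_v^M \oplus \mf u)$, invoke cleanness of the cuspidal local system $q\cE$ (Lusztig, extended to disconnected $M$ by summing over irreducible $M^\circ$-equivariant constituents) to identify the intersection complex with the extension by zero of $\dot{q\cE}_N$, and then conclude by proper pushforward together with \eqref{eq:1.37}. The only (minor) difference is technical: where you spread cleanness over the bundle by Zariski-local trivialization of $G/P$ and compatibility of $\IC$ with external products by constant sheaves on smooth factors, the paper does the same step globally and equivariantly via the trivial vector bundle $\mr{pr}_{\mf m_N}$ and the equivariant induction equivalence $\mr{ind}_{P \times \C^\times}^{G \times \C^\times}$.
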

\begin{proof}
Notice that the middle term in \eqref{eq:1.36} is isomorphic with $G \times \cC_v^M \oplus \mf u$
and that \eqref{eq:1.38} provides an isomorphism
\[
\dot{\mf g}_N \cong G \times^P (\cC_v^M \oplus \mf u ).
\]
With the commutative diagram
\begin{equation}\label{eq:1.39}
\xymatrix{
\cC_v^M \ar[d] & & \cC_v^M \oplus \mf u \ar[ll]^{\mr{pr}_{\cC_v^M}} \ar[d] \\
G \times^P \cC_v^M & & G \times^P (\cC_v^M \oplus \mf u) \ar[ll]^{\mr{id}_G \times \mr{pr}_{\cC_v^M}} 
}
\end{equation}
we can construct $\dot{q\cE}_N \in \mc D^b_{G \times \C^\times}(G \times^P (\cC_v^M \oplus \mf u))$
in two equivalent ways:
\begin{itemize}
\item pullback of $q\cE$ to $\cC_v^m \oplus \mf u$ (as $P \times \C^\times$-equivariant local system)
and then equivariant induction $\mr{ind}_{P \times \C^\times}^{G \times \C^\times}$ as in
\cite[\S 2.6.3]{BeLu};
\item equivariant induction $\mr{ind}_{P \times \C^\times}^{G \times \C^\times}$ of $q\cE$ to
$G \times^P \cC_v^M$ and then pullback to \\ $G \times^P (\cC_v^M \oplus \mf u)$.
\end{itemize}
In these terms 
\begin{equation}\label{eq:1.47}
K_N = \mu_{N,!} \, \dot{q\cE}_N ,
\end{equation}
where $\mu_N : G \times^P (\cC_v^M \oplus \mf u) \to \mf g_N$ is the restriction of \eqref{eq:1.40}.
Let $j_{\mf m_N} : \cC_v^M \to \mf m_N$ be the inclusion. Then
\begin{equation}\label{eq:1.41}
K_N = \mu_{N,!} (\mr{id}_G \times j_{\mf m_N} \times \mr{id}_{\mf u})_! \dot{q\cE}_N ,
\end{equation}
where now the domain of $\mu_N$ is $\mf m_N \oplus \mf u$.

Regarded as $M^\circ \times \C^\times$-equivariant local system on $\cC_v^M$, $q\cE$ is a direct sum
of irreducible cuspidal local systems $\cE$. Each of those $\cE$ is clean 
\cite[Theorem 23.1]{Lus-CharV}, which means that the natural maps 
\[
j_{\mf m_N,!} \cE \to \IC (\mf m_N, \cE) \to j_{\mf m_N,*} \cE 
\]
are isomorphisms in $\mc D^b_{M^\circ \times \C^\times}(\mf m_N)$.
Taking direct sums over the appropriate $\cE$, we find that the maps
\begin{equation}\label{eq:1.49}
j_{\mf m_N,!} q\cE \to \IC (\mf m_N, q\cE) \to j_{\mf m_N,*} q\cE .
\end{equation}
are isomorphisms in $\mc D^b_{M^\circ \times \C^\times}(\mf m_N)$ as well. In fact, 
since the maps in \eqref{eq:1.49} are $M \times \C^\times$-equivariant, they are also
isomorphisms in $\mc D^b_{M \times \C^\times}(\mf m_N)$ (see the remarks at the start
of Section \ref{sec:cuspidal}). In other words, $q\cE$ is also clean.

In the diagram \eqref{eq:1.39} the map $\mr{pr}_{\cC_v^M}$ extends naturally to
\[
\mr{pr}_{\mf m_N} : \mf m_N \oplus \mf u \to \mf m_N ,
\]
and both are trivial vector bundles. Hence (up to degree shifts)
\begin{multline}\label{eq:1.42}
\mr{pr}_{\mf m_N}^* j_{\mf m_N,!} q\cE = \mr{pr}_{\mf m_N}^* \IC_{P \times \C^\times} 
(\mf m_N, q\cE) = \mr{pr}_{\mf m_N}^* j_{\mf m_N,*} q\cE = \\
(j_{\mf m_N} \times \mr{id}_{\mf u})_* \mr{pr}_{\cC_v^M}^* q\cE =
\IC_{P \times \C^\times} (\mf m_N \oplus \mf u, \mr{pr}_{\cC_v^M}^* q\cE ) =
 (j_{\mf m_N} \times \mr{id}_{\mf u})_! \mr{pr}_{\cC_v^M}^* q\cE .
\end{multline}
The vertical maps in \eqref{eq:1.39} induce equivalences of categories 
$\mr{ind}_{P \times \C^\times}^{G \times \C^\times}$, which commute with the relevant functors 
induced by the horizontal maps in \eqref{eq:1.39}, so
\begin{equation}\label{eq:1.43}
\begin{aligned}
(\mr{id}_G \times j_{\mf m_N} \times \mr{id}_{\mf u})_! \dot{q\cE}_N & =
(\mr{id}_G \times j_{\mf m_N} \times \mr{id}_{\mf u})_! 
\mr{ind}_{P \times \C^\times}^{G \times \C^\times} \mr{pr}_{\cC_v^M}^* q\cE \\
& = \mr{ind}_{P \times \C^\times}^{G \times \C^\times} (j_{\mf m_N} \times \mr{id}_{\mf u})_!
\mr{pr}_{\cC_v^M}^* q\cE \\
& = \mr{ind}_{P \times \C^\times}^{G \times \C^\times} 
\IC_{P \times \C^\times} (\mf m_N \oplus \mf u, \mr{pr}_{\cC_v^M}^* q\cE ) \\
& = \IC_{G \times \C^\times} \big( G \times^P (\mf m_N \oplus \mf u), 
\mr{ind}_{P \times \C^\times}^{G \times \C^\times} \mr{pr}_{\cC_v^M}^* q\cE \big) \\
& = \IC_{G \times \C^\times} \big( G \times^P (\mf m_N \oplus \mf u), \dot{q\cE}_N \big) .
\end{aligned}
\end{equation}
Since $G \times^P (\mf m_N \oplus \mf u)$ is closed in 
$G \times^P \mf g_N$, the last expression is isomorphic with 
\begin{equation}\label{eq:1.55}
\IC_{G \times \C^\times} (G \times^P \mf g_N , \dot{q\cE}_N ).
\end{equation}
Via the isomorphism 
\begin{equation}\label{eq:1.57}
G \times^P \mf g_N \cong \mf g_N \times G/P 
\end{equation}
obtained from \eqref{eq:1.38} by restriction, \eqref{eq:1.55} becomes 
$\IC_{G \times \C^\times} (\mf g_N \times G/P, \dot{q\cE}_N )$. 
Combine that with \eqref{eq:1.41} and \eqref{eq:1.43}.
\end{proof}

The following method to prove semisimplicity of $K_N$ is based on the decomposition theorem for
perverse sheaves of geometric origin \cite[Th\'eor\`eme 6.2.5]{BBD}. The same method can be 
applied to $K$, using the first isomorphism in \eqref{eq:1.4}.

\begin{lem}\label{lem:1.17}
$K_N$ is a semisimple object of $\mc D^b_{G \times \C^\times} (\mf g_N)$.
\end{lem}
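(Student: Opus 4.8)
The plan is to combine Proposition~\ref{prop:1.14} with the decomposition theorem. By Proposition~\ref{prop:1.14} we have $K_N \cong \mr{pr}_{1,N,!}\,\IC_{G \times \C^\times}(\mf g_N \times G/P, \dot{q\cE}_N)$, so it suffices to show that the pushforward along $\mr{pr}_{1,N}$ of this intersection complex is semisimple. First I would observe, as noted just before the statement of Proposition~\ref{prop:1.14}, that $\mr{pr}_{1,N}$ becomes \emph{proper} after restricting to the closure of $\dot{\mf g}_N$ inside $\mf g_N \times G/P$: indeed $\mf g_N$ is a closed (in fact projective after removing the vector-space directions that are already absent here) subvariety, $G/P$ is projective, and $\dot{\mf g}_N \cong G \times_P(\cC_v^M \oplus \mf u)$ is already closed in $G \times_P \mf g_N \cong \mf g_N \times G/P$, so there is nothing to take a closure of. Hence $\mr{pr}_{1,N}$ restricted to $\dot{\mf g}_N$ is itself proper.

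Next I would invoke the decomposition theorem for pushforward along a proper map of the intersection complex of a local system of \emph{geometric origin}, in the form of \cite[Th\'eor\`eme 6.2.5]{BBD}. The input $\dot{q\cE}_N$ is a pullback, via the maps in \eqref{eq:1.36}, of the cuspidal local system $q\cE$; each irreducible constituent $\cE$ of $q\cE$ is of geometric origin (being cuspidal, it arises from the minimal $\IC$-constructions in \cite{Lus-CharV}), and the operations $f_{1,N}^*$, $f_{2,N}^*$, $\mr{ind}_{P\times\C^\times}^{G\times\C^\times}$, and proper pushforward all preserve this property. Therefore $\IC_{G\times\C^\times}(\mf g_N \times G/P, \dot{q\cE}_N)$ is a (shift of a) semisimple perverse sheaf of geometric origin, and its proper pushforward $\mr{pr}_{1,N,!}\,\IC_{G\times\C^\times}(\mf g_N\times G/P,\dot{q\cE}_N) = K_N$ is, up to degree shifts, a direct sum of shifted simple perverse sheaves on $\mf g_N$ — which is exactly the statement that $K_N$ is a semisimple object of $\mc D^b_{G\times\C^\times}(\mf g_N)$. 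One should also record that the decomposition theorem is available in the equivariant derived category of \cite{BeLu}: this follows because equivariant cohomology and the equivariant $\IC$ sheaves are computed via the approximation spaces in the Borel construction, to which the classical \cite{BBD} result applies level by level, compatibly with the transition maps.

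\textbf{Main obstacle.} The genuinely substantive point is the reduction to the classical (non-equivariant, or finite-level equivariant) decomposition theorem together with the verification that $\dot{q\cE}_N$ is of geometric origin; the properness of $\mr{pr}_{1,N}$ is essentially a bookkeeping observation once one has the model $\dot{\mf g}_N \cong G\times_P(\cC_v^M\oplus\mf u)$ from the proof of Proposition~\ref{prop:1.14}. A subtlety to handle carefully is that $G$ may be disconnected: here one uses the decomposition \eqref{eq:1.21}–\eqref{eq:1.13} to write $\dot{\mf g}_N$ as a finite disjoint union of twisted copies of the corresponding $G^\circ$-variety, so $K_N$ is a finite direct sum of (twists of) the connected-group version, and semisimplicity for each summand suffices. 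Finally one remarks, following the sentence preceding Lemma~\ref{lem:1.17}, that the identical argument applied to the first isomorphism in \eqref{eq:1.4} shows $K$ itself is semisimple on $\mf g$.
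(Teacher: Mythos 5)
Your proposal follows essentially the same route as the paper's proof: Proposition \ref{prop:1.14}, properness of $\mr{pr}_{1,N}$ on $\mf g_N \times G/P$, the observation that $q\cE$ and hence $\dot{q\cE}_N$ and the intersection complex are of geometric/algebraic origin, and then the equivariant decomposition theorem of \cite[\S 5.3.1]{BeLu} resting on \cite[Th\'eor\`eme 6.2.5]{BBD}; the only real difference is that the paper justifies the origin hypothesis by noting that the adjoint actions factor through automorphism groups defined over $\Z$ (which is why it assumes $G$ is definable over a finite extension of $\Z$), while you appeal to the construction of cuspidal local systems in \cite{Lus-CharV}. One slip to correct: $\dot{\mf g}_N \cong G \times_P (\cC_v^M \oplus \mf u)$ is \emph{not} closed in $G \times_P \mf g_N$ unless $v = 0$, since $\cC_v^M$ is an orbit whose closure contains smaller orbits (what is closed, and what the proof of Proposition \ref{prop:1.14} actually uses, is $G \times_P (\mf m_N \oplus \mf u)$); this does not damage your argument, because the intersection complex is an object on all of $\mf g_N \times G/P$, supported on the closure of $\dot{\mf g}_N$, and the properness you need is that of the projection $\mf g_N \times G/P \to \mf g_N$, which is automatic as $G/P$ is projective.
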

\begin{proof}
By construction every $M^\circ$-equivariant (cuspidal) local system on a Ad$(M^\circ)$-orbit in
$\mf m_N$ is geometric. The automorphism group $\mr{Aut}(M^\circ_{\mr{der}})$ of the derived 
subgroup of $M^\circ$ is algebraic and defined over $\Z$. The action of $M$ on $\mf m_N$ factors
through $\mr{Aut}(M^\circ_{\mr{der}})$, and hence the cuspidal local system $q\cE$ on $\cC_v^M$
is of geometric origin. 

Like for $M$, the automorphism group of $G^\circ_\der$ is algebraic and defined over $\Z$,
and the adjoint actions of $G$ and $P$ on $\mf g$ factor via that group.
Therefore not only $\pr_{\cC_v^M}^* q\cE$ but also 
\[
\dot{q\cE}_N = \mr{ind}_{P \times \C^\times}^{G \times \C^\times} \pr_{\cC_v^M}^* q\cE
\in \mc D^b_{G \times \C^\times} (G \times^P \mf m_N \oplus \mf u)
\]
is of geometric origin. As the isomorphism \eqref{eq:1.57} only involves $G$ via the adjoint action,
it follows that $\IC_{G \times \C^\times} (\mf g_N \times G / P, \dot{q\cE}_N)$ is of geometric 
origin as well. Since 
\[
\pr_{1,N} : \mf g_N \times G/P \to \mf g_N
\] 
is proper, we can apply the decomposition theorem for equivariant perverse sheaves 
\cite[\S 5.3.1]{BeLu} to Proposition \ref{prop:1.14}. This is based on the non-equivariant version 
from \cite[\S 6]{BBD}, and therefore requires objects of geometric origin.
\end{proof}

For compatibility with other papers we record that, by \eqref{eq:1.41}, \eqref{eq:1.42} and 
\eqref{eq:1.43}:
\begin{equation}\label{eq:1.44} 
\begin{aligned}
K_N & \cong \mu_{N,!} \mr{ind}_{P \times \C^\times}^{G \times \C^\times} 
\IC_{P \times \C^\times} (\mf m_N \oplus \mf u, \mr{pr}_{\cC_v^M}^* q\cE ) \\
& \cong \mu_{N,!} \mr{ind}_{P \times \C^\times}^{G \times \C^\times} \mr{pr}_{\mf m_N}^*
\IC_{P \times \C^\times} (\mf m_N, q\cE) \\
& \cong \mu_{N,!} (\mr{id}_G \times \mr{pr}_{\mf m_N})^*
\mr{ind}_{P \times \C^\times}^{G \times \C^\times} \IC_{P \times \C^\times} (\mf m_N, q\cE) .
\end{aligned}
\end{equation}
Like in \cite[\S 8.4]{Ach}, the diagram 
\begin{equation}\label{eq:1.52}
\mf m_N \to G \times^P \mf m_N \xleftarrow{\mr{id}_G \times \mr{pr}_{\mf m_N}} 
G \times^P (\mf m_N \oplus \mf u) \xrightarrow{\mu_N} \mf g_N
\end{equation}
gives rise to a parabolic induction functor
\begin{equation}\label{eq:1.53}
\mc I_{P \times \C^\times, \mf m_N}^{G \times \C^\times} = \mu_{N,!} (\mr{id}_G \times 
\mr{pr}_{\mf m_N})^* \mr{ind}_{P \times \C^\times}^{G \times \C^\times} :
\mc D^b_{P \times \C^\times} (\mf m_N) \to \mc D^b_{G \times \C^\times} (\mf g_N) .
\end{equation}
Since $U \subset P$ is contractible and acts trivially on $\mf m_N$, inflation along the
quotient map $P \to M$ induces an equivalence of categories
\[
\mc D^b_{P \times \C^\times} (\mf m_N) \cong \mc D^b_{M \times \C^\times} (\mf m_N) .
\]
With these notions \eqref{eq:1.44} says precisely that
\begin{equation}\label{eq:1.45}
K_N \cong \mc I_{P \times \C^\times, \mf m_N}^{G \times \C^\times} \, 
\IC_{M \times \C^\times} (\mf m_N, q\cE) .
\end{equation}
For later use we also mention the parabolic restriction functor
\begin{equation}
\mc R_{P \times \C^\times, \mf m_N}^{G \times \C^\times} = \big( \mr{ind}_{P \times \C^\times
}^{G \times \C^\times} \big)^{-1} (\mr{id}_G \times \mr{pr}_{\mf m_N})_* \mu_N^! :
\mc D^b_{G \times \C^\times} (\mf g_N) \to \mc D^b_{P \times \C^\times} (\mf m_N) .
\end{equation}
The arguments in Proposition \ref{prop:1.14} and \eqref{eq:1.44} admit natural analogues for $K$.
Namely, with the diagram
\[
\mf m_N \to G \times^P \mf m_N \xleftarrow{\mr{id}_G \times \mr{pr}_{\mf m_N}} 
G \times^P (\mf m_N \oplus \mf t \oplus \mf u) \xrightarrow{\mu} \mf g
\]
instead of \eqref{eq:1.52}, we get a functor similar to \eqref{eq:1.53}. That yields an isomorphism
\begin{equation}\label{eq:1.54}
K \cong \mu_! (\mr{id}_G \times \mr{pr}_{\mf m_N})^* 
\mr{ind}_{P \times \C^\times}^{G \times \C^\times} \IC_{P \times \C^\times} (\mf m_N, q\cE) .
\end{equation}
This also follows from \cite[Proposition 7.12]{Lus-Cusp2}, at least when $G$ is connected.

\subsection{Variations for centralizer subgroups} \
\label{par:centralizer}

Let $\sigma \in \mf t$, so that $M = Z_G (T) \subset Z_G (\sigma)$. We would like to compare Theorem
\ref{thm:1.2} with its version for $(Z_G (\sigma), M, q\cE)$. First we analyse the variety 
\[
(G/P)^\sigma := \{ g P \in G / P : \sigma \in \mr{Lie} (g P g^{-1}) \} .
\]
This is also the fixed point set of $\exp (\C \sigma)$ in $G/P$.
Let $Z_G^\circ (\sigma)$ be the connected component of $Z_G (\sigma)$.

\begin{lem}\label{lem:1.5}
For any $g P \in (G/P)^\sigma$, the subgroup $g P^\circ g^{-1} \cap Z_G^\circ (\sigma)$ of
$Z_G^\circ (\sigma)$ is parabolic.
\end{lem}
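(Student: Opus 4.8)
The plan is to show that $g P^\circ g^{-1} \cap Z_G^\circ(\sigma)$ is parabolic in $Z_G^\circ(\sigma)$ by exhibiting it as the stabilizer of a point in a complete variety acted on by $Z_G^\circ(\sigma)$, or equivalently by exhibiting its quotient as a projective (or complete) variety. After conjugating, I may as well assume $g = 1$, so that $\sigma \in \mr{Lie}(P)$, and I must prove that $P^\circ \cap Z_G^\circ(\sigma)$ is parabolic in $Z_G^\circ(\sigma)$.

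First I would recall the standard fact (due to Borel--Tits / Springer--Steinberg, and exploited heavily by Lusztig in the very references this paper builds on) that for a semisimple element $s$ of a connected reductive group acting on $G^\circ/P^\circ$, the fixed point set $(G^\circ/P^\circ)^s$ is a smooth projective variety, each of whose connected components is a single $Z_{G^\circ}^\circ(s)$-orbit that is itself a partial flag variety for $Z_{G^\circ}^\circ(s)$; in particular the stabilizer in $Z_{G^\circ}^\circ(s)$ of any point of $(G^\circ/P^\circ)^s$ is a parabolic subgroup. Here one takes $s = \exp(\sigma)$ (or works infinitesimally with $\sigma$ directly, which is cleaner since $\exp(\C\sigma)$ is the relevant one-parameter subgroup and $(G/P)^\sigma = (G/P)^{\exp(\C\sigma)}$ as the paper notes). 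The point $1 \cdot P^\circ$ lies in $(G^\circ/P^\circ)^{\exp\sigma}$ precisely because $\sigma \in \mr{Lie}(P^\circ) = \mr{Lie}(P)$, and its stabilizer in $Z_{G^\circ}^\circ(\exp\sigma)$ is exactly $P^\circ \cap Z_{G^\circ}^\circ(\exp\sigma)$.

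The one genuine subtlety — and I expect this to be the main point requiring care rather than a deep obstacle — is the interplay between $G$, $G^\circ$, and the centralizers: the lemma is stated with $Z_G^\circ(\sigma)$, the identity component of the centralizer in the possibly disconnected group $G$, while the classical fixed-point statement lives inside $G^\circ$. Here one uses that $Z_G^\circ(\sigma) = Z_{G^\circ}^\circ(\sigma)$: indeed $Z_G^\circ(\sigma)$ is a connected subgroup of $G$, hence contained in $G^\circ$, and it is visibly contained in $Z_{G^\circ}(\sigma)$, so it equals its identity component $Z_{G^\circ}^\circ(\sigma)$ (the reverse containment being immediate). Thus the disconnectedness of $G$ is irrelevant at this stage and one reduces cleanly to the connected classical statement. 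One should also note $P^\circ \cap Z_G^\circ(\sigma) = P^\circ \cap Z_{G^\circ}^\circ(\sigma)$ is connected, being the stabilizer of a point in a partial flag variety under a connected group, so ``parabolic'' is unambiguous.

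To assemble the argument: reduce to $g = 1$ by conjugation; replace $\sigma$ by $\exp(\C\sigma)$ and note $Z_G^\circ(\sigma) = Z_{G^\circ}^\circ(\sigma)$; invoke the structure of $(G^\circ/P^\circ)^{\exp\sigma}$ to identify the connected component through $1\cdot P^\circ$ as a partial flag variety for $Z_{G^\circ}^\circ(\sigma)$, namely $Z_{G^\circ}^\circ(\sigma)/(P^\circ \cap Z_{G^\circ}^\circ(\sigma))$; conclude that the denominator, being the stabilizer of a point of a complete homogeneous space, is parabolic. The main work is really just citing and correctly applying the fixed-point-of-semisimple-element result (for which \cite{Lus-Cusp1}, \cite{Lus-Int} or standard references on the geometry of flag varieties suffice) and checking the reduction $Z_G^\circ(\sigma) = Z_{G^\circ}^\circ(\sigma)$; no hard estimates or new constructions are needed.
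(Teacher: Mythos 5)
Your proposal is essentially correct, but it takes a different route from the paper, and the difference matters for the logical economy of this section. You prove the lemma by citing the full fixed-point package for a semisimple element acting on $G^\circ/P^\circ$ (components of the fixed locus are single $Z^\circ$-orbits, each a partial flag variety), whose ``in particular'' clause is literally the statement to be proven. The paper deliberately does \emph{not} do this: the reference it has at hand, \cite[Proposition 8.8.7.ii]{ChGi}, treats only Borel subgroups, and the paper proves Lemma \ref{lem:1.5} by a short self-contained argument (choose a maximal torus $T'$ of $P' = gP^\circ g^{-1}$ with $\sigma \in \mf t'$, take the Levi $M'$ containing $T'$, decompose $\mf g = \bar{\mf u}' \oplus Z(\mf m') \oplus \mf m'_\der \oplus \mf u'$ as $Z(\mf m')$-modules, and intersect with $Z_{\mf g}(\sigma)$, which preserves the decomposition because $Z(\mf m') \subset Z_{\mf g}(\sigma)$), precisely so that Lemma \ref{lem:1.5} can then be used in Lemma \ref{lem:1.6} to extend the Chriss--Ginzburg proof from Borels to arbitrary parabolics. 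So if you cite the component-equals-orbit statement for general $P^\circ$ you are, within this paper, citing (a consequence of) Lemma \ref{lem:1.6}, which here is derived \emph{from} Lemma \ref{lem:1.5}; to avoid circularity you must instead invoke the genuinely independent classical fact that a torus $S$ contained in a parabolic $P'$ of a connected reductive group satisfies that $Z_{G^\circ}(S) \cap P'$ is parabolic in $Z_{G^\circ}(S)$ (Borel--Tits; Digne--Michel), applied to the Zariski closure of $\exp(\C\sigma)$ -- your suggested citations \cite{Lus-Cusp1}, \cite{Lus-Int} do not obviously contain the needed statement. Two smaller points: working with the single element $s = \exp(\sigma)$ is not enough, since $Z_{G^\circ}(\exp\sigma)$ can be strictly larger than $Z_{G^\circ}(\sigma)$; you correctly flag that one should use $\exp(\C\sigma)$ (equivalently $\sigma$ itself, or the torus closure), and that choice has to be made, not just mentioned. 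Your reductions ($g=1$ by conjugation, $Z_G^\circ(\sigma) = Z_{G^\circ}^\circ(\sigma)$, in fact $Z_{G^\circ}(\sigma)$ is already connected as the centralizer of a torus) are fine. What the paper's argument buys is independence from any parabolic-case fixed-point theorem, at the cost of a few lines of Lie-algebra bookkeeping; what yours buys is brevity, at the cost of needing a carefully chosen external reference.
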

\begin{proof}
Consider the parabolic subgroup $P' := g P^\circ g^{-1}$ of $G^\circ$. Its Lie algebra $\mf p'$ 
contains the semisimple element $\sigma$, so there exists a maximal torus $T'$ of $P'$ with 
$\sigma \in \mf t'$. Let $M'$ be the unique Levi factor of $P'$ containing $T'$. The unipotent
radical $U'$ of $P'$ and the opposite parabolic $M' \bar{U}'$ give rise to decompositions of 
$Z(\mf m')$-modules
\[
\mf g = \bar{\mf u}' \oplus \mf p', \quad \mf p' = Z(\mf m') \oplus \mf m'_{\der} \oplus \mf u' .
\]
Since $Z(\mf m') \subset \mf t' \subset Z_{\mf g}(\sigma)$, these decompositions are preserved
by intersecting with $Z_{\mf g} (\sigma)$:
\[
Z_{\mf g}(\sigma) = Z_{\bar{\mf u}'}(\sigma) \oplus Z_{\mf p'}(\sigma), \quad 
Z_{\mf p'}(\sigma) = Z(\mf m') \oplus Z_{\mf m'_{\der}}(\sigma) \oplus Z_{\mf u'}(\sigma) .
\] 
This shows that $Z_{\mf g}(\sigma) \cap \mf p'$ is a parabolic subalgebra of $Z_{\mf g}(\sigma)$.
Hence $Z^\circ_G (\sigma) \cap P'$ is a parabolic subgroup of $Z_G^\circ (\sigma)$.
\end{proof}

The subgroup $Z_G (\sigma) \subset G$ stabilizes $(G/P)^\sigma$, so the latter is a union of
$Z_G (\sigma)$-orbits. 

\begin{lem}\label{lem:1.6}
The connected components of $(G/P)^\sigma$ are precisely its $Z_G^\circ (\sigma)$-orbits.
\end{lem}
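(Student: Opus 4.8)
The plan is to prove the claimed equality of two collections of subsets by showing that each $Z_G^\circ(\sigma)$-orbit on $(G/P)^\sigma$ is simultaneously a \emph{closed} subvariety and an \emph{irreducible component} of $(G/P)^\sigma$. Once this is established, the orbits form a finite family of pairwise disjoint clopen subsets, hence are exactly the connected components; and since $Z_G^\circ(\sigma)$ is connected, each orbit is connected, so no connected component can properly contain an orbit.

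First I would fix $gP \in (G/P)^\sigma$ and set $\mc O = Z_G^\circ(\sigma)\cdot gP$. As $Z_G^\circ(\sigma)$ centralizes $\sigma$ it preserves $(G/P)^\sigma$, so $\mc O \subseteq (G/P)^\sigma$. The stabilizer of $gP$ in $Z_G^\circ(\sigma)$ is $Z_G^\circ(\sigma)\cap gPg^{-1}$, which contains $Z_G^\circ(\sigma)\cap gP^\circ g^{-1}$; by Lemma \ref{lem:1.5} this last group is a parabolic subgroup of the connected reductive group $Z_G^\circ(\sigma)$. A closed subgroup of $Z_G^\circ(\sigma)$ containing a parabolic subgroup is itself parabolic: its identity component is parabolic (a connected subgroup containing a Borel subgroup), say $Q$, and since parabolic subgroups are self-normalizing the whole subgroup equals $Q$. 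Hence $\mc O \cong Z_G^\circ(\sigma)/Q$ is a connected smooth projective variety, so $\mc O$ is closed in $G/P$, a fortiori closed in $(G/P)^\sigma$. Taking Lie algebras (in characteristic $0$ the Lie algebra of the intersection is the intersection of the Lie algebras), $\dim\mc O = \dim Z_{\mf g}(\sigma) - \dim\big( Z_{\mf g}(\sigma)\cap\Ad(g)\mf p \big)$.

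Next I would bound the local dimension of $(G/P)^\sigma$ at $gP$. The inclusion $(G/P)^\sigma = (G/P)^{\exp(\C\sigma)} \hookrightarrow G/P$ is an immersion whose source is pointwise $\exp(\C\sigma)$-fixed, so $T_{gP}(G/P)^\sigma$ injects into $\big( T_{gP}(G/P) \big)^{\exp(\C\sigma)} = \big( \mf g / \Ad(g)\mf p \big)^{\ad(\sigma)}$. Because $gP \in (G/P)^\sigma$ the subalgebra $\Ad(g)\mf p$ contains $\sigma$, hence is $\ad(\sigma)$-stable; and $\sigma$ is semisimple, being an element of $\mf t = \mr{Lie}(Z(M)^\circ)$. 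Therefore $\big( \mf g / \Ad(g)\mf p \big)^{\ad(\sigma)} \cong Z_{\mf g}(\sigma) / \big( Z_{\mf g}(\sigma)\cap\Ad(g)\mf p \big)$, of dimension exactly $\dim\mc O$, so $\dim_{gP}(G/P)^\sigma \le \dim_\C T_{gP}(G/P)^\sigma \le \dim\mc O$. Now $\mc O$, being irreducible, lies in some irreducible component $C$ of $(G/P)^\sigma$ through $gP$, and $\dim\mc O \le \dim C \le \dim_{gP}(G/P)^\sigma \le \dim\mc O$; as $\mc O$ is closed and irreducible of the same dimension as the irreducible variety $C$ containing it, $\mc O = C$. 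Thus every point of $(G/P)^\sigma$ lies in an orbit which is an irreducible component; since distinct orbits are disjoint and there are only finitely many irreducible components, these orbits are clopen and connected, hence are precisely the connected components of $(G/P)^\sigma$.

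I expect the step that requires the most care to be the passage from ``closed subgroup containing a parabolic'' to ``parabolic subgroup'': this is where the possible disconnectedness of $G$ — and hence of the a priori stabilizer $Z_G^\circ(\sigma)\cap gPg^{-1}$ — is absorbed, by invoking self-normalizing-ness of parabolic subgroups of connected reductive groups. The remaining ingredients (semisimplicity of $\sigma$, the tangent-space inclusion for the fixed locus of $\exp(\C\sigma)$, compatibility of $\ad(\sigma)$-invariants with the quotient $\mf g/\Ad(g)\mf p$, and the fact that a closed irreducible subvariety of the same dimension as an irreducible variety containing it is the whole variety) are all routine in characteristic $0$.
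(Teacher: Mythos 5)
Your proof is correct, but it follows a genuinely different route from the paper. The paper first uses the decomposition $G/P \cong \bigsqcup_\gamma \gamma G^\circ/P^\circ$ to reduce to the connected group $G^\circ$, and then cites \cite[Proposition 8.8.7.ii]{ChGi}, remarking that the argument there (written for Borel subgroups) extends to arbitrary parabolics by Lemma \ref{lem:1.5}. You instead reprove the needed fixed-point statement directly and in full generality: Lemma \ref{lem:1.5} plus the self-normalizing property of parabolic subgroups shows the stabilizer $Z_G^\circ(\sigma)\cap gPg^{-1}$ is parabolic in $Z_G^\circ(\sigma)$ (this is indeed the point where disconnectedness of $G$ is absorbed, and your argument for ``closed subgroup containing a parabolic is parabolic'' is sound, since parabolics of a connected group are connected and self-normalizing), so each orbit is projective, hence closed, of dimension $\dim Z_{\mf g}(\sigma)-\dim\bigl(Z_{\mf g}(\sigma)\cap \Ad(g)\mf p\bigr)$; the tangent-space bound $T_{gP}(G/P)^\sigma \subseteq \bigl(\mf g/\Ad(g)\mf p\bigr)^{\ad(\sigma)} \cong Z_{\mf g}(\sigma)/\bigl(Z_{\mf g}(\sigma)\cap\Ad(g)\mf p\bigr)$, valid because $\sigma$ is semisimple and lies in $\Ad(g)\mf p$, then forces each orbit to be an irreducible component, and the clopen/connectedness bookkeeping is routine. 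What the paper's approach buys is brevity (reduction plus a citation); what yours buys is a self-contained argument that avoids the reduction to $G^\circ$ altogether and simultaneously establishes the supplementary fact the paper quotes from \cite{ChGi} right after the lemma, namely that every $Z_G^\circ(\sigma)$-orbit in $(G/P)^\sigma$ is a smooth closed irreducible component.
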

\begin{proof}
Clearly every $Z_G^\circ (\sigma)$-orbit is connected. From \eqref{eq:1.13} we get an
isomorphism of varieties
\begin{equation}\label{eq:1.14}
G / P = \bigsqcup\nolimits_{\gamma \in N_G (M)/M} \gamma G^\circ P / P 
\cong \bigsqcup\nolimits_\gamma \gamma G^\circ / P^\circ .
\end{equation}
Here $Z_G^\circ (\sigma)$ acts on $\gamma G^\circ / P^\circ$ by
\[
z \cdot \gamma g P^\circ = \gamma (\gamma^{-1} z \gamma) g P^\circ ,
\]
so via conjugation by $\gamma^{-1}$ and the natural action of $\gamma^{-1} Z_G^\circ (\sigma) \gamma
= Z_G^\circ (\Ad (\gamma^{-1}) \sigma)$ on $G^\circ / P^\circ$. Taking $\exp (\C \sigma)$-fixed points
in \eqref{eq:1.14} gives
\begin{align*}
(G/P)^\sigma & \cong \bigsqcup\nolimits_\gamma (\gamma G^\circ / P^\circ )^\sigma \\
& = \bigsqcup\nolimits_\gamma \{ \gamma g P^\circ : g \in G^\circ, \sigma \in 
\mr{Lie}(\gamma g P^\circ g^{-1} \gamma^{-1}) \} \\
& = \bigsqcup\nolimits_\gamma \gamma \{ g P^\circ : g \in G^\circ, 
\Ad (\gamma^{-1}) \sigma \in \mr{Lie} (g P^\circ g^{-1}) \} \\
& = \bigsqcup\nolimits_\gamma \gamma (G^\circ / P^\circ )^{\Ad (\gamma^{-1}) \sigma} .
\end{align*}
This reduces the lemma to the case $G^\circ / P^\circ$, so to the connected group $G^\circ$.
For that we refer to \cite[Proposition 8.8.7.ii]{ChGi}. That reference is written for Borel subgroups, 
but with Lemma \ref{lem:1.5} the proof also applies to other conjugacy classes of parabolic subgroups.
\end{proof}

It is also shown in \cite[Proposition 8.8.7.ii]{ChGi} that every $Z_G^\circ (\sigma)$-orbit in
$(G/P)^\sigma$ is a submanifold and an irreducible component. 

\begin{lem}\label{lem:1.7}
There are isomorphisms of $Z_G (\sigma)$-varieties
\[
\bigsqcup_{w \in N_{Z_G (\sigma)}(M) \backslash N_G (M)} Z_G (\sigma) / Z_{w P w^{-1}}(\sigma)
\cong \bigsqcup_{w \in N_{Z_G (\sigma)}(M) \backslash N_G (M)} Z_G (\sigma) \cdot w P 
= (G / P)^\sigma .
\]
\end{lem}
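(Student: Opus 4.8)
The plan is to establish the two isomorphisms separately: the right-hand equality, identifying $(G/P)^\sigma$ with the disjoint union of $Z_G(\sigma)$-orbits $Z_G(\sigma)\cdot wP$ over representatives $w$ of $N_{Z_G(\sigma)}(M)\backslash N_G(M)$; and the left-hand isomorphism, which is just the orbit-stabilizer theorem applied componentwise, once we know the stabilizer of $wP$ in $Z_G(\sigma)$ is $Z_{wPw^{-1}}(\sigma)$.

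\textbf{Reduction to orbits and a transversal.} First I would recall from Lemma \ref{lem:1.6} that the connected components of $(G/P)^\sigma$ are exactly its $Z_G^\circ(\sigma)$-orbits, hence $Z_G(\sigma)$ acts on $(G/P)^\sigma$ with finitely many orbits, and each $Z_G(\sigma)$-orbit is a union of connected components. It therefore suffices to parametrize the $Z_G(\sigma)$-orbits. Since $M=Z_G(T)$ and $\sigma\in\mf t$, we have $M\subset Z_G(\sigma)$, and $T\subset P\cap P'$ for $P'=wPw^{-1}$ whenever $w\in N_G(M)$ (as $w$ normalizes $M\supset T$... more precisely $T=Z(M)^\circ$ is characteristic in $M$, so $w$ normalizes $T$, hence $\sigma\in wLie(P)w^{-1}$ for all $w\in N_G(M)$, where $L=Lie$). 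Thus every $wP$ with $w\in N_G(M)$ lies in $(G/P)^\sigma$, and $Z_G(\sigma)\cdot wP\subseteq (G/P)^\sigma$.

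\textbf{Surjectivity: every point is conjugate to some $wP$.} Fix $gP\in (G/P)^\sigma$, so $\sigma\in Lie(gPg^{-1})$. I want $z\in Z_G(\sigma)$ with $zgP = wP$ for some $w\in N_G(M)$, i.e. $zg\in N_G(M)P$ modulo $P$... actually the cleaner statement: I claim $gP = zwP$ for suitable $z\in Z_G(\sigma)$, $w\in N_G(M)$. Write $P'=gPg^{-1}$, a quasi-parabolic with $\sigma\in Lie(P')$. Inside $P'^\circ=gP^\circ g^{-1}$ choose, as in the proof of Lemma \ref{lem:1.5}, a maximal torus $T'$ with $\sigma\in\mf t'$; then $\mf t'\subset Z_{\mf g}(\sigma)$ and $T'$ is a maximal torus of $Z_G^\circ(\sigma)\cap P'^\circ$ (in fact of $P'^\circ$). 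Now $gTg^{-1}=Z(gM^\circ g^{-1})^\circ$ is a torus in $P'^\circ$; conjugating by an element of $P'^\circ$ we may assume it sits inside the Levi factor of $P'$ determined by $T'$, and then it is $Z_G^\circ(\sigma)$-conjugate — here I use that all maximal tori of a connected group are conjugate, applied inside the centralizer — to a torus containing... The key point I need is: $gMg^{-1}$ and $M$ are two quasi-Levi subgroups of $G$ containing $Z_G(\sigma)$-conjugate copies of $T$ with $\sigma$ in their Lie algebras in the required position; since $M=Z_G(Z(M^\circ)^\circ)=Z_G(T)$ is determined by $T$, once we conjugate $gTg^{-1}$ to $T$ by an element $z_0\in Z_G^\circ(\sigma)$ (possible because both are maximal tori of $Z_G^\circ(\sigma)$-conjugate parabolic data — this is where Lemma \ref{lem:1.5} is essential), we get $z_0 g M g^{-1} z_0^{-1}=M$, hence $z_0 g\in N_G(M)$ and $gP = z_0^{-1}(z_0 g)P$ with $z_0^{-1}\in Z_G^\circ(\sigma)$ and $z_0 g\in N_G(M)$. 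This shows $(G/P)^\sigma = \bigcup_{w\in N_G(M)} Z_G(\sigma)\cdot wP$.

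\textbf{Disjointness and the stabilizer computation.} Two orbits $Z_G(\sigma)wP$ and $Z_G(\sigma)w'P$ coincide iff $w'P = zwP$ for some $z\in Z_G(\sigma)$, i.e. $w^{-1}z^{-1}w'\in P$; combined with $w^{-1}w'\in N_G(M)$ (both $w,w'\in N_G(M)$) and $N_G(M)\cap P = M$ (standard, since a quasi-parabolic has a unique quasi-Levi in each $N_G(M)$-class — or directly: $N_G(M)\cap P$ normalizes $M$ and lies in $P=M\ltimes U$, forcing it into $M$ as $U$ is normal with trivial normalizer-intersection), one deduces $w^{-1}z w'\in M\subset Z_G(\sigma)$ hence $z\in wN_{Z_G(\sigma)}(M)w'^{-1}$... the upshot is that the orbits are indexed precisely by the double cosets $N_{Z_G(\sigma)}(M)\backslash N_G(M)/(\text{something trivial})$, i.e. by $N_{Z_G(\sigma)}(M)\backslash N_G(M)$, because the stabilizer of $wP$ inside $N_G(M)$-translates is absorbed by $N_{Z_G(\sigma)}(M)$. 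Finally, $\mathrm{Stab}_{Z_G(\sigma)}(wP) = Z_G(\sigma)\cap wPw^{-1} = Z_{wPw^{-1}}(\sigma)$ directly from the definitions, so orbit-stabilizer gives $Z_G(\sigma)\cdot wP\cong Z_G(\sigma)/Z_{wPw^{-1}}(\sigma)$ as $Z_G(\sigma)$-varieties, and summing over the transversal yields the left isomorphism.

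\textbf{Main obstacle.} The crux is the surjectivity step: showing that any $gP\in(G/P)^\sigma$ can be moved by $Z_G(\sigma)$ to lie over $N_G(M)$. Concretely this is a statement about conjugacy of the pair $(gP^\circ g^{-1},\sigma)$ inside $Z_G^\circ(\sigma)$, and it rests on Lemma \ref{lem:1.5} (that $gP^\circ g^{-1}\cap Z_G^\circ(\sigma)$ is parabolic in $Z_G^\circ(\sigma)$) together with the conjugacy of maximal tori / parabolic subgroups in the connected reductive group $Z_G^\circ(\sigma)$ — essentially the connected-group version of the lemma, which the excerpt attributes to \cite[Proposition 8.8.7]{ChGi} via the reduction in Lemma \ref{lem:1.6}. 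Once that is in hand, the disjointness and the identification of stabilizers are routine bookkeeping with the decomposition \eqref{eq:1.13} and the relation $N_G(M)\cap P = M$.
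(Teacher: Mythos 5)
Your bookkeeping around the orbits through the points $wP$ is fine: each $wP$ with $w\in N_G(M)$ lies in $(G/P)^\sigma$, its stabilizer in $Z_G(\sigma)$ is $Z_G(\sigma)\cap wPw^{-1}=Z_{wPw^{-1}}(\sigma)$, and with $N_G(M)\cap P=M$ these orbits are indexed by $N_{Z_G(\sigma)}(M)\backslash N_G(M)$; this is also how the paper argues, and it yields the left-hand isomorphism together with the inclusion of the union into $(G/P)^\sigma$. The genuine gap is your surjectivity step, exactly where you yourself flagged the crux. You need $z_0\in Z_G^\circ(\sigma)$ conjugating $gTg^{-1}$ to $T$, and you justify it by conjugacy of maximal tori; but $T$ and $gTg^{-1}$ are in general proper subtori of $Z_G^\circ(\sigma)$ (of dimension $\dim Z(M^\circ)$, not the rank), and subtori of $Z_G^\circ(\sigma)$ that are conjugate in $G$ need not be conjugate in $Z_G^\circ(\sigma)$. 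Lemma \ref{lem:1.5} only lets you put the parabolic $gP^\circ g^{-1}\cap Z_G^\circ(\sigma)$ in standard position; it gives no control over where the Levi $gM^\circ g^{-1}$ (equivalently $gTg^{-1}$) sits. In fact the step fails: take $G=Sp_4(\C)$, $M=M^\circ\cong GL_1\times SL_2$ the standard Levi of the long simple root $\beta$, with its cuspidal local system on the regular nilpotent orbit, and $\sigma\in\mf t$ generic, so $Z_G(\sigma)=M$. For the short simple root $\alpha$, the point $s_\alpha P$ lies in $(G/P)^\sigma$ (the full maximal torus $T'$ sits inside $s_\alpha Ps_\alpha^{-1}$ and $\sigma\in\mf t'$), yet $s_\alpha Ts_\alpha^{-1}$ is a subtorus of the $SL_2$-factor of $M$ and is not $Z_G(\sigma)$-conjugate to $T=Z(M)^\circ$; the $Z_G(\sigma)$-orbit of $s_\alpha P$ is a $\mathbb{P}^1\cong M/(M\cap s_\alpha Ps_\alpha^{-1})$ containing no $wP$ with $w\in N_G(M)$, whereas $\bigsqcup_w Z_G(\sigma)\cdot wP$ consists of the two isolated points $P$ and $s_{2e_1}P$. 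So in this example the asserted equality with $(G/P)^\sigma$ does not hold, and no conjugation argument of the kind you sketch can produce it.

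For fairness I should add that this is not a defect you could have cured by following the paper more closely: the paper's own proof passes through the same point, asserting that the $G^\circ$-conjugates of $P^\circ$ containing $T'$ are parametrized by $N_{G^\circ}(T')/N_{M^\circ}(T')\cong N_{G^\circ}(M^\circ)/M^\circ$, and that bijection fails in the same example (four conjugates of $P^\circ$ contain $T'$, two of which have Levi $s_\alpha M^\circ s_\alpha^{-1}\neq M^\circ$, while $N_{G^\circ}(M^\circ)/M^\circ$ has order two). What both arguments actually establish is the isomorphism $\bigsqcup_{w} Z_G(\sigma)/Z_{wPw^{-1}}(\sigma)\cong\bigsqcup_w Z_G(\sigma)\cdot wP$ together with the containment of this union in $(G/P)^\sigma$; the missing components are precisely the orbits of points $gP$ for which $gMg^{-1}$ is not $Z_G(\sigma)$-conjugate to $M$. (In the example those extra components carry no points of $\dot{\mf g}^\sigma$, since no $X\in Z_{\mf g}(\sigma)$ satisfies $\Ad (g^{-1})X\in\cC_v^M\oplus\mf t\oplus\mf u$ there, so the later decomposition \eqref{eq:1.24} may well survive — but that needs a separate argument, not the one you gave.)
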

\begin{proof}
By Lemma \ref{lem:1.6} there exist finitely many $\gamma \in G$ such that 
\begin{equation}\label{eq:1.15}
(G/P)^\sigma = \sqcup_\gamma Z_G^\circ (\sigma) \cdot \gamma P .
\end{equation}
Then the same holds with $Z_G (\sigma)$ instead of $Z_G^\circ (\sigma)$, and fewer $\gamma$'s. 
The $Z_G (\sigma)$-stabilizer of $\gamma P$ is
\[
\{ z \in Z_G (\sigma) : z \gamma P \gamma^{-1} = \gamma P \gamma^{-1} \} = 
Z_G (\sigma) \cap \gamma P \gamma^{-1} = Z_{\gamma P \gamma^{-1}}(\sigma) .
\]
That proves the lemma, except for the precise index set. 

Fix a maximal torus $T'$ of $Z_G^\circ (\sigma)$ with $T \subset T'$. Every parabolic subgroup of 
$G^\circ$ or $Z_G^\circ (\sigma)$ is conjugate to one containing $T'$. The $G^\circ$-conjugates of 
$P^\circ$ that contain $T'$ are the $w P^\circ w^{-1}$ with $w \in N_{G^\circ}(T')$, or equivalently with
\[
w \in N_{G^\circ}(T') / N_{P^\circ}(T') = N_{G^\circ}(T') / N_{M^\circ}(T') \cong 
N_{G^\circ}(M^\circ) / M^\circ .
\]
For $w,w' \in N_{G^\circ}(M^\circ)$, $w P^\circ$ and $w' P^\circ$ are in the same 
$Z_G^\circ (\sigma)$-orbit if and only if $w' w^{-1} \in N_{Z_G^\circ (\sigma)}(M^\circ)$. 
We find that
\begin{equation}\label{eq:1.19}
(G^\circ / P^\circ )^\sigma = \bigsqcup\nolimits_{w \in N_{Z_G^\circ (\sigma)}(M^\circ) \backslash
N_{G^\circ}(M^\circ)} Z_G^\circ (\sigma) \cdot w P^\circ .
\end{equation}
We note that the group 
\[
N_{G^\circ}(M^\circ) / M^\circ = N_{G^\circ}(T) / Z_{G^\circ}(T) = N_{G^\circ}(M) / M^\circ \cong
N_{G^\circ}(M) M / M
\]
normalises $P$. When we replace $G^\circ / P^\circ$ by $G / P$ in \eqref{eq:1.19}, the options for
$w$ need to be enlarged to $N_G (M) / M$. Next we replace $Z_G^\circ (\sigma)$ by $Z_G (\sigma)$,
so that $w P$ and $w' P$ are in the same $Z_G (\sigma)$-orbit if and only if $w' w^{-1} \in 
N_{Z_G (\sigma)}(M) / M$. Notice that $wP \in (G/P)^\sigma$ because
\[
\sigma \in \mf m = \mr{Lie}(w M w^{-1}) \subset \mr{Lie}(w P w^{-1}) .
\]
We conclude that
\[
(G / P )^\sigma = \bigsqcup_{w \in N_{Z_G^\circ (\sigma)}(M) \backslash
N_{G}(M)} Z_G^\circ (\sigma) \cdot w P = \bigsqcup_{w \in N_{Z_G (\sigma)}(M) \backslash
N_{G}(M)} Z_G (\sigma) \cdot w P . \qedhere
\]
\end{proof}

The fixed point set of $\exp (\C \sigma)$ in $\dot{\mf g}$ is
\[
\dot{\mf g}^\sigma = \dot{\mf g} \cap (Z_{\mf g} (\sigma) \times (G/P)^\sigma) =
\{ (X,gP) \in Z_{\mf g}(\sigma) \times (G/P)^\sigma : \Ad (g^{-1}) X \in \cC_v^M + \mf t + \mf u \} .
\]
Clearly $\dot{\mf g}^\sigma$ is related to $\dot{Z_{\mf g}(\sigma)}$ and to 
$\dot{Z_{\mf g}(\sigma)}^\circ$. With \eqref{eq:1.19} and \eqref{eq:1.21} we can make that precise:
\begin{align}
\label{eq:1.24} & \dot{\mf g}^\sigma = \bigsqcup_{w \in N_{Z_G^\circ (\sigma)}(M) \backslash N_{G}(M)}
\dot{Z_{\mf g}(\sigma)}^\circ_w = \bigsqcup_{w \in N_{Z_G (\sigma)}(M) \backslash N_{G}(M)}
\dot{Z_{\mf g}(\sigma)}_w \\
\nonumber & \dot{Z_{\mf g}(\sigma)}_w = \{ (X,g Z_{w P w^{-1}}(\sigma)) \in Z_{\mf g}(\sigma) \times
Z_G (\sigma) / Z_{w P w^{-1}}(\sigma) : \\
\nonumber & \hspace{2cm} \Ad (g^{-1})X \in \Ad (w) (\cC_v^M + \mf t + \mf u) \} .
\end{align}
Let $j' : \dot{\mf g}^\sigma \to \dot{\mf g}$ be the inclusion and let $\pr_1^\sigma$ be the restriction
of $\pr_1$ to $\dot{\mf g}^\sigma$. We define
\[
K_\sigma = (\pr_1^\sigma )_! j^{'*} \dot{q\cE} \in \mc D^b_{Z_G (\sigma) \times \C^\times}(Z_{\mf g}(\sigma)) .
\]
From \eqref{eq:1.24} we infer that $K_\sigma$ is a direct sum of the parts $K_{\sigma,w}$ (resp.
$K^\circ_{\sigma,w}$) coming from $\dot{Z_{\mf g}(\sigma)}_w$ (resp. from $\dot{Z_{\mf g}(\sigma)}_w^\circ$),
and each such part is a version of the $K$ for $Z_G (\sigma)$ (resp. for $Z_G^\circ (\sigma)$), twisted
by $w \in N_G (M) / M$.

These objects admit versions restricted to subvarieties of nilpotent elements, which we indicate by a
subscript $N$. In particular
\[
K_{N,\sigma} = (\pr_{1,N}^\sigma )_! j_N^{'*} \dot{q\cE}_N 
\in \mc D^b_{Z_G (\sigma) \times \C^\times}(Z_{\mf g}(\sigma)_N) 
\]
can be decomposed as a direct sum of subobjects $K_{N,\sigma,w}$ or $K_{N,\sigma,w}^\circ$.

\begin{lem}\label{lem:1.15}
The objects $K_\sigma, K_{\sigma,w} \in \mc D^b_{Z_G (\sigma) \times \C^\times} (Z_{\mf g}(\sigma))$ and \\ $K_{N,\sigma}, K_{N,\sigma,w} \in \mc D^b_{Z_G (\sigma) \times \C^\times} (Z_{\mf g}(\sigma)_N)$
are semisimple.
\end{lem}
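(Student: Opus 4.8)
The plan is to reduce the semisimplicity of $K_\sigma$, $K_{\sigma,w}$, $K_{N,\sigma}$, and $K_{N,\sigma,w}$ to the decomposition theorem for equivariant perverse sheaves of geometric origin \cite[\S 5.3.1]{BeLu}, exactly as in the proof of Lemma \ref{lem:1.17}, after identifying each of these objects as the proper pushforward of an intersection cohomology complex along a $Z_G(\sigma)$-equivariant proper map. Since $K_\sigma$ (resp.\ $K_{N,\sigma}$) is by construction the direct sum of the pieces $K_{\sigma,w}$ (resp.\ $K_{N,\sigma,w}$) indexed by $w \in N_{Z_G(\sigma)}(M) \backslash N_G(M)$ coming from \eqref{eq:1.24}, and each such piece is a copy of the corresponding object for the (possibly disconnected) reductive group $Z_G(\sigma)$, twisted by an element $w \in N_G(M)/M$, it suffices to treat $K_{\sigma,w}$ and $K_{N,\sigma,w}$ for a single $w$; and twisting by $w$ does not affect semisimplicity, so really the content is the statement for $Z_G(\sigma)$ in place of $G$.

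First I would verify that the whole geometric setup of Paragraph \ref{par:geomConst} and \ref{par:semisimplicity} applies verbatim to the triple $(Z_G(\sigma), M, q\cE)$: here $M = Z_G(T) \subset Z_G(\sigma)$ is still a quasi-Levi subgroup of $Z_G(\sigma)$ (by Lemma \ref{lem:1.5}, $Z_{wPw^{-1}}(\sigma) \cap Z_G^\circ(\sigma)$ is parabolic in $Z_G^\circ(\sigma)$, so $Z_{wPw^{-1}}(\sigma)$ is a quasi-parabolic of $Z_G(\sigma)$ with quasi-Levi $M$), and $q\cE$ remains an $M$-equivariant cuspidal local system on the same nilpotent orbit $\cC_v^M$ in $\mf m$. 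Therefore the analogue of Proposition \ref{prop:1.14} gives
\[
K_{\sigma,w} \cong (\pr_1^\sigma)_! \, \IC_{Z_G(\sigma) \times \C^\times}\bigl( Z_{\mf g}(\sigma) \times (Z_G(\sigma)/Z_{wPw^{-1}}(\sigma)),\, j^{'*}\dot{q\cE} \bigr),
\]
and the analogue for $K_{N,\sigma,w}$ with $Z_{\mf g}(\sigma)_N$ in place of $Z_{\mf g}(\sigma)$, via the same diagrams \eqref{eq:1.39}--\eqref{eq:1.54} applied inside $Z_G(\sigma)$. The projection $\pr_1^\sigma : Z_{\mf g}(\sigma) \times (Z_G(\sigma)/Z_{wPw^{-1}}(\sigma)) \to Z_{\mf g}(\sigma)$ is proper because $Z_G(\sigma)/Z_{wPw^{-1}}(\sigma)$ is complete (it is a disjoint union of partial flag varieties of $Z_G^\circ(\sigma)$ by Lemma \ref{lem:1.6}), and likewise for the nilpotent version.

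Next I would check algebraicity. As in Lemma \ref{lem:1.17}, $q\cE$ is of algebraic origin because the $M$-action on $\mf m_N$ factors through $\mr{Aut}(M^\circ_{\der})$, which is algebraic over $\Z$; this is unchanged. The adjoint actions of $Z_G(\sigma)$ and of $Z_{wPw^{-1}}(\sigma)$ on $Z_{\mf g}(\sigma)$ factor through $\mr{Aut}(Z_G^\circ(\sigma)_{\der})$, which is again algebraic and defined over $\Z$, so $\dot{q\cE}$ restricted to $\dot{\mf g}^\sigma$ and the associated $\IC$-complex are of algebraic origin. Then the equivariant decomposition theorem \cite[\S 5.3.1]{BeLu} applied to the proper map $\pr_1^\sigma$ shows that $K_{\sigma,w}$ is a direct sum of shifts of simple perverse sheaves, i.e.\ semisimple in $\mc D^b_{Z_G(\sigma)\times\C^\times}(Z_{\mf g}(\sigma))$; likewise for $K_{N,\sigma,w}$. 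Taking the (finite) direct sums over $w$ gives semisimplicity of $K_\sigma$ and $K_{N,\sigma}$.

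The main obstacle I anticipate is purely bookkeeping rather than conceptual: one must be careful that the twisting by $w \in N_G(M)/M$ interacts correctly with the $\IC$-description — that is, that each $\dot{Z_{\mf g}(\sigma)}_w$ really is isomorphic, as a $Z_G(\sigma)\times\C^\times$-variety equipped with its local system, to a "twisted" copy of the standard $\dot{Z_{\mf g}(\sigma)}$, so that Proposition \ref{prop:1.14}'s proof transfers. This is the same twisting phenomenon already handled in \eqref{eq:1.21}--\eqref{eq:1.24} and in \cite[\S 4]{AMS2}, so it should go through without difficulty, but it is the step that needs genuine verification rather than a citation.
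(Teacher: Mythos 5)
Your proposal is correct and follows essentially the same route as the paper: identify each piece $K_{\sigma,w}$ (resp. $K_{N,\sigma,w}$) with the $K$ (resp. $K_N$) attached to the group $Z_G(\sigma)$, the quasi-parabolic $Z_{wPw^{-1}}(\sigma)$ with quasi-Levi $M$, and the cuspidal local system $\mathrm{Ad}(w)_*q\cE$, then invoke the already-established semisimplicity (the $\IC$-description \eqref{eq:1.4} and Proposition \ref{prop:1.14} with Lemma \ref{lem:1.17}) and sum over $w$. The only cosmetic difference is that you re-run the algebraicity/decomposition-theorem argument for $Z_G(\sigma)$ explicitly, whereas the paper simply cites those results as applying to the new cuspidal quasi-support.
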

\begin{proof}
We note that, like in \eqref{eq:1.38}, there is an isomorphism of 
$Z_G (\sigma) \times \C^\times$-varieties
\[
\dot{Z_G (\sigma)}_w \cong Z_G (\sigma) \times^{Z_{w P w^{-1}(\sigma)}} 
\big( \mr{Ad}(w) (\cC_v^M \oplus \mf t \oplus \mf u) \cap Z_{\mf g} (\sigma) \big) .
\]
Here $Z_{w P w^{-1}}(\sigma)$ is a quasi-parabolic subgroup of $G$ with quasi-Levi factor $M$ and
\[
\mr{Ad}(w) (\cC_v^M \oplus \mf t \oplus \mf u) \cap Z_{\mf g} (\sigma) =
\cC_v^M \oplus \mf t \oplus \big( \mr{Ad}(w) \mf u \cap Z_{\mf g} (\sigma) \big) 
\]
with $\mr{Ad}(w) \mf u \cap Z_{\mf g} (\sigma)$ the Lie algebra of the unipotent radical of 
$Z_{w P w^{-1}}(\sigma)$. Comparing that with the construction of $K$ in 
\eqref{eq:1.40}--\eqref{eq:1.46}, we deduce that $K_{\sigma,w}$ is the $K$ for the group $Z_G (\sigma)$ 
and the cuspidal local system $\mr{Ad}(w)_* q\cE$. As $K$ is semisimple, see \eqref{eq:1.4}, 
so is the current $K_{\sigma,w}$.

The same reasoning, now using \eqref{eq:1.47}, shows that $K_{N,\sigma,w}$ is the $K_N$ for 
$Z_G (\sigma)$ and $\mr{Ad}(w)_* q\cE$. By Proposition \ref{prop:1.14}.b, $K_{N,\sigma,w}$ is 
semisimple.

The objects $K_\sigma$ and $K_{N,\sigma}$ are direct sums of objects $K_{\sigma,w}$ and 
$K_{N,\sigma,w}$, so these are also semisimple.
\end{proof}

The above decompositions of $K_\sigma$ and $K_{N,\sigma}$ are the key to analogues of parts of
Paragraph \ref{par:geomConst} for $Z_G (\sigma)$.

\begin{lem}\label{lem:1.8}
Let $w,w' \in N_G (M) / M$. The inclusion $Z_{\mf g}(\sigma)_N \to Z_{\mf g}(\sigma)$ induces an 
isomorphism of graded $H^*_{Z_G^\circ (\sigma) \times \C^\times}(\pt)$-modules
\[
\Hom^*_{\mc D^b_{Z_G^\circ (\sigma) \times \C^\times}(Z_{\mf g}(\sigma))}
(K^\circ_{\sigma,w},K^\circ_{\sigma,w'}) \longrightarrow \Hom^*_{\mc D^b_{Z_G^\circ (\sigma) \times 
\C^\times}(Z_{\mf g}(\sigma)_N)} (K^\circ_{N,\sigma,w},K^\circ_{N,\sigma,w'}) .
\]
\end{lem}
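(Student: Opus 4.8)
The plan is to reduce the statement to a known instance of the analogous result for the varieties $\mf g$ and $\mf g_N$ themselves, using the fact --- established in the proof of Lemma~\ref{lem:1.15} --- that $K^\circ_{\sigma,w}$ is nothing but ``the $K$'' for the connected group $Z_G^\circ(\sigma)$, its quasi-Levi $M$, and the cuspidal local system $\mr{Ad}(w)_* q\cE$, and likewise that $K^\circ_{N,\sigma,w}$ is ``the $K_N$'' for those same data. The point is that both sides of the claimed isomorphism are, after this translation, instances of a single comparison map: for a connected reductive group $H$ (here $H = Z_G^\circ(\sigma)$), a quasi-Levi $M$, and cuspidal data, the inclusion $\mf h_N \hookrightarrow \mf h$ induces a map on graded $\Hom$-spaces between the corresponding $K$'s. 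When $w = w'$ this is exactly the content of Theorem~\ref{thm:1.2}.c (for the group $Z_G^\circ(\sigma)$), which identifies both $\End^*_{\mc D^b_{H \times \C^\times}(\mf h)}(K)$ and $\End^*_{\mc D^b_{H \times \C^\times}(\mf h_N)}(K_N)$ with the same twisted graded Hecke algebra, compatibly with restriction along $\mf h_N \to \mf h$.

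First I would make the reduction precise: invoke Lemma~\ref{lem:1.15} and the isomorphisms set up in its proof to replace $(K^\circ_{\sigma,w}, K^\circ_{N,\sigma,w})$ by $(K, K_N)$ for the triple $(Z_G^\circ(\sigma), M, \mr{Ad}(w)_* q\cE)$, and similarly for $w'$. Second, for the diagonal case $w = w'$, apply Theorem~\ref{thm:1.2} to this connected group: the two graded algebras $\End^*_{\mc D^b}(K)$ and $\End^*_{\mc D^b}(K_N)$ are each canonically isomorphic to $\mh H(Z_G^\circ(\sigma), M, \mr{Ad}(w)_* q\cE)$, and these isomorphisms are linked by functoriality for $\mf h_N \to \mf h$ (this is exactly the last sentence of the proof of Theorem~\ref{thm:1.2}.c). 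Hence the restriction map is an isomorphism of graded $H^*_{Z_G^\circ(\sigma)\times\C^\times}(\pt)$-modules --- note the $\Hom$-spaces are genuinely modules over this equivariant cohomology ring of a point via \eqref{eq:1.1}/\eqref{eq:1.50}, and the whole identification is equivariant for that structure. Third, for general $w \neq w'$ I would pass from $\Hom$ to $\End$ by the standard trick of replacing the pair of objects with their direct sum: $\Hom^*(A,A') \oplus \Hom^*(A',A)$ together with $\End^* A \oplus \End^* A'$ are summands of $\End^*(A \oplus A')$, and $A \oplus A' = K^\circ_{\sigma,w} \oplus K^\circ_{\sigma,w'}$ is itself a summand of $K_\sigma$ (resp.\ $K^\circ_\sigma$), which by Lemma~\ref{lem:1.15} is semisimple and is again ``the $K$'' for $Z_G^\circ(\sigma)$ with the (possibly reducible, possibly twisted) cuspidal system $q\cE_\sigma := \bigoplus_w \mr{Ad}(w)_* q\cE$ --- so Theorem~\ref{thm:1.2}.c applies verbatim to it. Restricting the graded-module isomorphism $\End^*_{\mc D^b(\mf h)}(K_\sigma) \cong \End^*_{\mc D^b(\mf h_N)}(K_{N,\sigma})$ to the relevant $\Hom$-summand between the $w$-isotypic and $w'$-isotypic pieces yields the claim, since the summand decomposition of $K_\sigma$ into its $K_{\sigma,w}$ is preserved under pullback to $\mf h_N$ (it is induced by the decomposition \eqref{eq:1.24} of $\dot{\mf g}^\sigma$, which restricts to a decomposition of $\dot{\mf g}^\sigma_N$).

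The main obstacle I anticipate is bookkeeping rather than anything deep: one must verify that Theorem~\ref{thm:1.2} was genuinely proved for \emph{disconnected} $G$ and for \emph{reducible} cuspidal local systems $q\cE$ --- and $Z_G^\circ(\sigma)$ is connected so that part is fine, but $q\cE_\sigma$ need not be irreducible as an $M^\circ$-equivariant local system --- and that the identification of $\End^*(K)$ with $\End^*(K_N)$ there really is compatible with the direct-sum decomposition indexed by $w$. One should check that the functoriality map $\End^*_{\mc D^b(\mf h)}(K) \to \End^*_{\mc D^b(\mf h_N)}(K_N)$ respects the $H^*_{Z_G^\circ(\sigma)\times\C^\times}(\pt)$-module structures coming from \eqref{eq:1.1} and \eqref{eq:1.50}: this is automatic because pullback along $\mf h_N \to \mf h$ is a functor of $H\times\C^\times$-equivariant derived categories and the module structures are defined via the canonical map to a point, which factors through both inclusions. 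Once these compatibilities are in hand, the proof is just: decompose, apply Theorem~\ref{thm:1.2} to $Z_G^\circ(\sigma)$, take summands. A short way to phrase the whole argument in the paper is to prove the $w = w'$ case from Theorem~\ref{thm:1.2} applied to $(Z_G^\circ(\sigma), M, \mr{Ad}(w)_* q\cE)$ and then remark that the off-diagonal $\Hom$-spaces are extracted from the $\End$ of the semisimple object $K^\circ_\sigma$ in the same way, using Lemma~\ref{lem:1.15}.
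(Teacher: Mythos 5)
Your reduction has a genuine gap at exactly the point you flag and then wave away: the off-diagonal case. Theorem \ref{thm:1.2} is stated and proved for a single cuspidal quasi-support $(M,\cC_v^M,q\cE)$ -- one nilpotent orbit, one $M$-equivariant cuspidal local system -- and its part (c) compares $\End^*$ of the single associated object $K$ with $\End^*(K_N)$. The object you want to feed into it, $K^\circ_{\sigma,w}\oplus K^\circ_{\sigma,w'}$ (or all of $K_\sigma$ with ``$q\cE_\sigma=\bigoplus_w \Ad(w)_*q\cE$''), is a direct sum of $K$'s attached to \emph{different} cuspidal data: the systems $\Ad(w)_*q\cE$ live on the orbits $\Ad(w)\cC_v^M$, which need not coincide or be conjugate inside the centralizer, and the sum is certainly not an irreducible local system on a single orbit. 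So ``Theorem \ref{thm:1.2}.c applies verbatim'' is not available; the statement you would need is precisely a comparison isomorphism for $\Hom^*$ between two \emph{distinct} twisted objects under restriction to the nilpotent variety, and that is the content of the lemma itself. The paper's proof supplies exactly this missing ingredient by going back to the equivariant-homology computation: it decomposes $\dot{q\cE}$ on each piece $\dot{Z_{\mf g}(\sigma)}^\circ_w$ into irreducible $Z_G^\circ(\sigma)\times\C^\times$-equivariant local systems $\Ad(w)_*\dot\cE$, expresses both $\Hom$-spaces as equivariant homology of $\ddot{Z_{\mf g}(\sigma)}^\circ$ resp.\ $\ddot{Z_{\mf g}(\sigma)}^\circ_N$ (as in Theorem \ref{thm:1.2}.b), and then invokes \cite[Proposition 4.7]{Lus-Cusp1} extended by the argument of \cite[Proposition 2.6, (11)]{AMS2} to handle the case of two different local systems and to see that the inclusion $Z_{\mf g}(\sigma)_N\to Z_{\mf g}(\sigma)$ induces an isomorphism of free $H^*_{Z_G^\circ(\sigma)\times\C^\times}$-modules. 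Your proposal replaces this step by an appeal to a generalization of Theorem \ref{thm:1.2} that the paper does not prove, which makes the argument circular.

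There is also a smaller mismatch in your diagonal case: Lemma \ref{lem:1.8} concerns $Z_G^\circ(\sigma)$-equivariant categories, and $M$ need not be contained in $Z_G^\circ(\sigma)$, so ``$K^\circ_{\sigma,w}$ is the $K$ for $(Z_G^\circ(\sigma),M,\Ad(w)_*q\cE)$'' does not typecheck as an instance of the construction; Lemma \ref{lem:1.15} identifies $K_{\sigma,w}$ for the full centralizer $Z_G(\sigma)$, for which $M$ is a quasi-Levi. To work $Z_G^\circ(\sigma)$-equivariantly one must pass to the irreducible $M^\circ$-equivariant constituents $\cE$ of $q\cE$ (as the paper does), and then even for $w=w'$ one faces $\Hom$'s between different constituents $\dot\cE\neq\dot\cE'$, which again are not covered by Theorem \ref{thm:1.2}.c alone. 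So both the diagonal and off-diagonal parts ultimately require the \cite{Lus-Cusp1}/\cite{AMS2} input rather than a formal summand-extraction from Theorem \ref{thm:1.2}.
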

\begin{proof}
Decompose $\dot{q\cE} |_{\dot{Z_{\mf g}(\sigma)}^\circ_w}$ as direct sum of irreducible 
$Z_G^\circ (\sigma) \times \C^\times$-equivariant local systems. Each summand is of the form
$\Ad (w)_* \dot{\cE}$, for an irreducible summand $\cE$ of $q\cE$ as $M^\circ$-equivariant
local system. Similarly we decompose $\dot{q\cE} |_{\dot{Z_{\mf g}(\sigma)}^\circ_{w'}}$
as direct sum of terms $\Ad (w')_* \dot{\cE}$. Like in the proof of Lemma \ref{lem:1.15}:
\begin{equation}\label{eq:1.29}
K^\circ_{\sigma,w} = \bigoplus\nolimits_{\cE} (\pr_{1,Z_{\mf g}(\sigma)})_! \Ad (w)_* \dot{\cE} ,
\end{equation}
and similarly for $K^\circ_{N,\sigma,w}, K^\circ_{\sigma,w'}$ and $K^\circ_{N,\sigma,w'}$.
A computation like the start of the proof of \cite[Theorem 8.11]{Lus-Cusp2} (already used in
Theorem \ref{thm:1.2}.b) shows that
\begin{multline}\label{eq:1.25}
\Hom^*_{\mc D^b_{Z_G^\circ (\sigma) \times \C^\times}(Z_{\mf g}(\sigma))} \big( (\pr_{1,Z_{\mf g}
(\sigma)})_! \Ad (w)_* \dot{\cE}, (\pr_{1,Z_{\mf g}(\sigma)})_! \Ad (w')_* \dot{\cE'} \big) \\
\cong H_*^{Z_G^\circ (\sigma) \times \C^\times} \big( \ddot{Z_{\mf g}(\sigma)}^\circ ,
i_\sigma^* \big( \Ad (w)_* \dot{\cE} \boxtimes \Ad (w')_* \dot{\cE'}^\vee \big) \big) . 
\end{multline}
Here $\ddot{Z_{\mf g}(\sigma)}^\circ = \dot{Z_{\mf g}(\sigma)}^\circ \times^{Z_{\mf g}(\sigma)} 
\dot{Z_{\mf g}(\sigma)}^\circ$ and 
\[
i_\sigma : \ddot{Z_{\mf g}(\sigma)}^\circ \to 
\dot{Z_{\mf g}(\sigma)}^\circ \times \dot{Z_{\mf g}(\sigma)}^\circ
\]
denotes the inclusion. The same applies with subscripts $N$:
\begin{multline}\label{eq:1.26}
\Hom^*_{\mc D^b_{Z_G^\circ (\sigma) \times \C^\times}(Z_{\mf g}(\sigma)_N)} \big( (\pr_{1,Z_{\mf g}
(\sigma)_N})_! \Ad (w)_* \dot{\cE}_N, (\pr_{1,Z_{\mf g}(\sigma)_N})_! \Ad (w')_* \dot{\cE'}_N \big) \\
\cong H_*^{Z_G^\circ (\sigma) \times \C^\times} \big( \ddot{Z_{\mf g}(\sigma)}_N^\circ ,
i_{N,\sigma}^* \big( \Ad (w)_* \dot{\cE} \boxtimes \Ad (w')_* \dot{\cE'}^\vee \big) \big) . 
\end{multline}
When $w = w'$ and $\cE = \cE'$, \eqref{eq:1.25} and \eqref{eq:1.26} are computed in \cite[Proposition 
4.7]{Lus-Cusp1}. In fact \cite[Proposition 4.7]{Lus-Cusp1} also applies in our more general setting, 
with different $\Ad (w)_* \dot{\cE}$ and $\Ad (w') \dot{\cE'}$. Namely, to handle those we add the 
argument from the proof of \cite[Proposition 2.6]{AMS2}, especially \cite[(11)]{AMS2}. That works for 
both $Z_{\mf g}(\sigma)$ and for $Z_{\mf g}(\sigma)_N$, and entails that there are natural isomorphisms
of graded $H^*_{Z_G^\circ (\sigma) \times \C^\times}(\pt)$-modules
\begin{equation}\label{eq:1.28}
\begin{array}{ccc}
H^*_{Z_G^\circ (\sigma) \times \C^\times}(\dot{Z_{\mf g}(\sigma)}^\circ) \otimes_\C H_0 
\big( \ddot{Z_{\mf g}(\sigma)}^\circ , i_\sigma^* \big( \Ad (w)_* \dot{\cE} \boxtimes 
\Ad (w')_* \dot{\cE'}^\vee \big) \big)  & \cong & \text{\eqref{eq:1.25}}, \\
H^*_{Z_G^\circ (\sigma) \times \C^\times}(\dot{Z_{\mf g}(\sigma)}^\circ) \otimes_\C 
H_0 \big( \ddot{Z_{\mf g}(\sigma)}_N^\circ , i_{N,\sigma}^* \big( \Ad (w)_* \dot{\cE} \boxtimes 
\Ad (w')_* \dot{\cE'}^\vee \big) \big) & \cong & \text{\eqref{eq:1.26}} .
\end{array} 
\end{equation}
Moreover, the proof of \cite[Proposition 4.7]{Lus-Cusp1} shows that the two lines of \eqref{eq:1.28} 
are isomorphic via the inclusion $Z_{\mf g}(\sigma)_N \to Z_{\mf g}(\sigma)$.
\end{proof}

Finally, we can generalize the second isomorphism in Theorem \ref{thm:1.2}.c.

\begin{prop}\label{prop:1.9}
The inclusion $Z_{\mf g}(\sigma)_N \to Z_{\mf g}(\sigma)$ induces a graded algebra isomorphism
\[
\End^*_{\mc D^b_{Z_G (\sigma) \times \C^\times}(Z_{\mf g}(\sigma))}(K_\sigma) \longrightarrow
\End^*_{\mc D^b_{Z_G (\sigma) \times \C^\times}(Z_{\mf g}(\sigma)_N)}(K_{N,\sigma}) . 
\]
\end{prop}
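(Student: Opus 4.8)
The plan is to mimic the passage from connected to disconnected groups in the proof of Theorem~\ref{thm:1.2}(c), using Lemma~\ref{lem:1.8} as the input for the identity component. Write $\iota\colon Z_{\mf g}(\sigma)_N\to Z_{\mf g}(\sigma)$ for the inclusion. It is $Z_G(\sigma)\times\C^\times$-equivariant, the pullback $\iota^*$ is a functor of triangulated categories commuting with shifts, and by base change along the evident Cartesian square (as in \eqref{eq:1.48}--\eqref{eq:1.37}) one has $\iota^*K_\sigma=K_{N,\sigma}$, $\iota^*K_{\sigma,w}=K_{N,\sigma,w}$ and $\iota^*K^\circ_{\sigma,w}=K^\circ_{N,\sigma,w}$. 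Hence $\iota^*$ induces the map in the statement, and since $\iota^*$ preserves identities and composition of morphisms this map is automatically a homomorphism of graded algebras (and of $H^*_{Z_G(\sigma)\times\C^\times}(\pt)$-modules); it remains to prove that it is bijective.

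First I would establish the corresponding statement for the connected group $Z_G^\circ(\sigma)$. By \eqref{eq:1.24}, as $Z_G^\circ(\sigma)\times\C^\times$-equivariant objects $K_\sigma=\bigoplus_w K^\circ_{\sigma,w}$ and $K_{N,\sigma}=\bigoplus_w K^\circ_{N,\sigma,w}$, the sums running over $w\in N_{Z_G^\circ(\sigma)}(M)\backslash N_G(M)$, so that
\[
\End^*_{\mc D^b_{Z_G^\circ(\sigma)\times\C^\times}(Z_{\mf g}(\sigma))}(K_\sigma)
=\bigoplus_{w,w'}\Hom^*_{\mc D^b_{Z_G^\circ(\sigma)\times\C^\times}(Z_{\mf g}(\sigma))}(K^\circ_{\sigma,w},K^\circ_{\sigma,w'})
\]
and likewise over $Z_{\mf g}(\sigma)_N$. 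By Lemma~\ref{lem:1.8}, $\iota^*$ is an isomorphism on each summand, so summing over $w,w'$ it induces an isomorphism of graded algebras
\[
\End^*_{\mc D^b_{Z_G^\circ(\sigma)\times\C^\times}(Z_{\mf g}(\sigma))}(K_\sigma)
\;\isom\;
\End^*_{\mc D^b_{Z_G^\circ(\sigma)\times\C^\times}(Z_{\mf g}(\sigma)_N)}(K_{N,\sigma}).
\]

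Finally I would descend from $Z_G^\circ(\sigma)$ to $Z_G(\sigma)$. The finite group $\Gamma_\sigma:=N_{Z_G(\sigma)}(M)/N_{Z_G^\circ(\sigma)}(M)$ acts (projectively, via a $2$-cocycle) on the $Z_G^\circ(\sigma)$-equivariant endomorphism algebras above by permuting and twisting the index $w$, and --- exactly as in the reduction from Lusztig's connected-group results to disconnected $G$ carried out in \cite[\S 2, \S 4]{AMS2} and \cite[Proposition 7.14]{Lus-Cusp2} --- one recovers the $Z_G(\sigma)$-equivariant algebras as the corresponding $\Gamma_\sigma$-invariants, over both $Z_{\mf g}(\sigma)$ and $Z_{\mf g}(\sigma)_N$ (this uses that $|\Gamma_\sigma|$ is invertible in $\C$). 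Since $\iota$ is $Z_G(\sigma)$-equivariant, $\iota^*$ intertwines the two $\Gamma_\sigma$-actions, so the isomorphism of the previous paragraph restricts to an isomorphism on $\Gamma_\sigma$-invariants, which is the required graded algebra isomorphism. The main obstacle is precisely this last step: one must verify carefully that the $Z_G(\sigma)$-equivariant endomorphism algebra really is the ($2$-cocycle twisted) $\Gamma_\sigma$-invariant part of the $Z_G^\circ(\sigma)$-equivariant one and that this identification is natural in the underlying space --- equivalently, that the structural isomorphisms on $Z_{\mf g}(\sigma)$ and on $Z_{\mf g}(\sigma)_N$ (the analogues of \eqref{eq:1.10} and \eqref{eq:1.4}) are intertwined by $\iota^*$. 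This is the same bookkeeping as in the proof of Theorem~\ref{thm:1.2}(c), now fed with Lemma~\ref{lem:1.8} in place of \cite[Theorem 8.11]{Lus-Cusp2}.
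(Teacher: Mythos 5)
Up to its final step your argument is the paper's: the map is induced by the $Z_G(\sigma)\times\C^\times$-equivariant inclusion (your base-change remark $\iota^*K_\sigma\cong K_{N,\sigma}$ is consistent with the definitions), and summing the isomorphisms of Lemma \ref{lem:1.8} over $w,w'\in N_{Z_G^\circ(\sigma)}(M)\backslash N_G(M)$, via the decomposition \eqref{eq:1.24}, yields the desired isomorphism for the $Z_G^\circ(\sigma)\times\C^\times$-equivariant endomorphism algebras. This is exactly how the paper begins.

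The gap is in your descent from $Z_G^\circ(\sigma)$ to $Z_G(\sigma)$. The paper does this in one line: in characteristic zero, $\Hom$-spaces in $\mc D^b_{Z_G(\sigma)\times\C^\times}$ are the $\pi_0(Z_G(\sigma))$-invariants of the corresponding $\Hom$-spaces in $\mc D^b_{Z_G^\circ(\sigma)\times\C^\times}$, and since the inclusion $Z_{\mf g}(\sigma)_N\to Z_{\mf g}(\sigma)$ is $Z_G(\sigma)$-equivariant, the connected-group isomorphism restricts to these invariants. You propose instead to recover the $Z_G(\sigma)$-equivariant algebras as ($\natural$-twisted) invariants under $\Gamma_\sigma=N_{Z_G(\sigma)}(M)/N_{Z_G^\circ(\sigma)}(M)$. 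That identification is not justified and fails in general: $\Gamma_\sigma$ only injects into $\pi_0(Z_G(\sigma))$, and a component of $Z_G(\sigma)$ need not contain any element normalizing $M$; such a component still acts on $\End^*_{\mc D^b_{Z_G^\circ(\sigma)\times\C^\times}(Z_{\mf g}(\sigma))}(K_\sigma)$, typically by permuting summands $K^\circ_{\sigma,w}$ attached to twists of the cuspidal datum that are $Z_G(\sigma)$- but not $Z_G^\circ(\sigma)N_{Z_G(\sigma)}(M)$-conjugate, and its invariance cannot be dropped. For instance, already with $\sigma=0$, $G^\circ=SL_2\times SL_2$, $G=G^\circ\rtimes\langle\tau\rangle$ with $\tau$ the factor swap, $M=SL_2\times T_2$ ($T_2$ a maximal torus of the second factor) and $q\cE$ cuspidal on the regular nilpotent orbit of the first factor, one has $N_G(M)\subset G^\circ$, so $\Gamma_\sigma$ is trivial, while $\pi_0(Z_G(\sigma))$ acts nontrivially by interchanging two orthogonal cuspidal ``sheets'' of $K_\sigma=K$; the $\Gamma_\sigma$-invariants are then strictly larger than the $Z_G(\sigma)\times\C^\times$-equivariant endomorphism algebra. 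The normalizer quotients and cocycles of \eqref{eq:1.10} and \cite{AMS2} describe the \emph{outcome} of the computation (the algebras $\C[W_{q\cE},\natural_{q\cE}]$ inside $\End^0$), not the descent mechanism, which is always $\pi_0$-invariance. Replacing $\Gamma_\sigma$ by $\pi_0(Z_G(\sigma))$ and invoking this standard fact closes the gap and makes your proof coincide with the paper's.
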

\begin{proof}
Take the direct sum of the instances of Lemma \ref{lem:1.8}, over all \\
$w,w' \in N_{Z_G^\circ (\sigma)} (M) \backslash N_G (M)$. 
By \eqref{eq:1.24}, that yields a natural isomorphism
\[
\End^*_{\mc D^b_{Z_G^\circ (\sigma) \times \C^\times}(Z_{\mf g}(\sigma))}(K_\sigma) \longrightarrow
\End^*_{\mc D^b_{Z_G^\circ (\sigma) \times \C^\times}(Z_{\mf g}(\sigma)_N)}(K_{N,\sigma}) . 
\]
Now we take $\pi_0 (Z_G (\sigma))$-invariants on both sides, that replaces $\End^*_{\mc D^b_{Z_G^\circ 
(\sigma) \times \C^\times} (?)}$ by $\End^*_{\mc D^b_{Z_G (\sigma) \times \C^\times} (?)}$.
\end{proof}

\section{Description of $\mc D^b_{G \times GL_1}(\mf g_N)$ with Hecke algebras}
\label{sec:description}

We want to make a (right) module category of $\mh H = \mh H (G,M,q\cE)$ 
equivalent with a category of equivariant constructible sheaves. In view of Theorem \ref{thm:1.2}, 
we should compare with $\mc D^b_{G \times GL_1} (\mf g_N,K_N)$, the triangulated subcategory of 
$\mc D^b_{G \times \C^\times} (\mf g_N)$ generated by the simple summands of the semisimple object 
$K_N$. Since that involves complexes of 
sheaves, we have to look at differential graded $\mh H$-modules. Recall that $\mh H$ has no terms 
in odd degrees, so that its differential can only be zero. Hence a differential graded 
$\mh H$-module $M$ is just a graded $\mh H$-module $\bigoplus_{n \in \Z} M_n$ with a differential 
$d_M$ of degree 1. 

As $\mc D^b_{G \times GL_1} (\mf g_N)$ is a derived category, we are lead to 
$\mc D (\mh H - \Mod_{dg})$, the derived category of differential graded right $\mh H$-modules. 
Its bounded version is $\mc D^b (\mh H - \Mod_{\mr{fgdg}})$, where the subscript stands for 
``finitely generated differential graded". We note that $\mh H -\Mod_{dg}$ is much smaller
than $\mh H -\Mod$, for instance the only irreducible $\mh H$-modules it contains are those
on which $\mc O (\mf t \oplus \C)$ acts via evaluation at $(0,0)$. In fact the triangulated
category  $\mc D^b (\mh H - \Mod_{\mr{fgdg}})$ is already generated by a single object, 
namely $\mh H$ \cite[Corollary 11.1.5]{BeLu}.

From another angle, we aim to analyse the entire category $\mc D^b_{G \times GL_1}(\mf g_N)$ 
in terms of suitable module categories of Hecke algebras. As there may be several Levi subgroups 
and equivariant cuspidal local systems involved, we will need a finite collection of such Hecke 
algebras. The motivating and archetypical example is \cite[\S 12]{BeLu}. There it is shown that, 
for a connected complex reductive group $G'$ with a Borel subgroup $B'$ containing a maximal 
torus $T'$, there are equivalences of triangulated categories
\begin{equation}\label{eq:3.9}
\mc D^b_{G'} (G' / B') \cong \mc D^b_{B'} (\mr{pt}) \cong 
\mc D^b (\mc O (\mr{Lie}(T')) - \Mod_{\mr{fgdg}}) .
\end{equation}
The isomorphism $\mh H (G,M,q\cE) \cong \End^*_{\mc D^b_{G \times \C^\times} (\mf g_N)} (K_N)$
from Theorem \ref{thm:1.2} gives rise to an additive functor
\begin{equation}\label{eq:3.3}
\begin{array}{ccc}
\mc D^b_{G \times \C^\times} (\mf g_N, K_N) & \to & \mc D^b (\mh H - \Mod_{\mr{fgdg}}) \\
S & \mapsto & \Hom^*_{\mc D^b_{G \times \C^\times} (\mf g_N)} (K_N,S) 
\end{array} . \vspace{-2mm}
\end{equation}
However, it is not clear whether this functor is triangulated or fully faithful (on an
appropriate subcategory). One problem is that $\mc D^b_{G \times \C^\times} (\mf g_N)$ does not 
arise as the (bounded) derived category of an abelian category, another that 
$\Hom^*_{\mc D^b_{G \times \C^\times} (\mf g_N)}$ is defined rather indirectly.

\subsection{Transfer to a ground field of positive characteristic} \
\label{par:transfer}

We will overcome the above problems by constructing a more subtle functor instead of \eqref{eq:3.3},
which will lead to an equivalence of categories. To this end, we will first transfer the entire
setup from the ground field $\C$ to a ground field of positive characteristic. 
All our varieties, algebraic groups and (complexes of) sheaves may be considered over an
algebraically closed ground field instead of $\C$, see \cite[\S 4.3]{BeLu}. In particular we can take
an algebraic closure $k_s$ of a finite field $\F_q$ whose characteristic $p$ is good for $G$, like in 
\cite{Rid,RiRu}. As we do not require that $G$ is connected, we decree that ``good" also means that $p$ 
does not divide the order of $\pi_0 (G)$. For consistency, we replace the variety $\mh A^1 (\C) = \C$ 
(on which $\mb r$ is the standard coordinate) by the affine space $\mh A^1$.
In our setting, the topology of the coefficient field $\C$ of our sheaves does not play a role.
Since $\Ql \cong \C$ as fields, we may just as well look at sheaves of $\Ql$-vector spaces everywhere.

This setup has the advantage that one can pass to varieties over finite fields, and to mixed 
(equivariant) sheaves. To emphasize that we consider an object with ground field $k_s$ we will 
sometimes add a subscript $s$. This notation comes from \cite[\S 6]{BBD}, where $k_s$ arises as the 
residue field for some discrete valuation ring, which relates $k_s$ to special fibres. In the remainder 
of this section we will regard $G$ as an algebraic group, and for an action of $G$ or $G \times GL_1$ 
we tacitly assume that these groups are considered over the same field as the varieties on which they act. 

To that end, and to get semisimplicty of $K_N$ from Lemma \ref{lem:1.17}, we assume that $G$ 
can be defined over a finite extension of $\Z$. That is hardly a restriction, since by 
Chevalley's construction it holds for any connected reductive group over an algebraically closed field.
For $G^\circ$ we may use Chevalley's $\Z$-model. Then reduction modulo $p$ and extension of scalars to
$k_s$ gives the corresponding Chevalley group $G^\circ_s$ over $k_s$. Similarly, from $\mf g_N$ over 
$\Z$ we obtain by reduction modulo $p$ and extension to $k_s$ the nilpotent variety $\mf g_{N,s}$ 
in the Lie algebra of $G^\circ_s$.

\begin{lem}\label{lem:3.5}
The classification of $G \times GL_1$-equivariant cuspidal local systems (with $\Ql$-coefficients) 
on $\mf g_N$ is the same for the two base fields $\C$ and $k_s$.
\end{lem}
\begin{proof}
The classification of equivariant cuspidal local systems supported on the variety of unipotent 
elements in $G^\circ$ \cite{Lus-Int} shows this for $G^\circ$. By \cite[\S 2.1.f]{Lus-Cusp1}, 
such local systems are automatically $G^\circ \times GL_1$-equivariant. With the natural bijection 
between the unipotent orbits in $G^\circ$ and the nilpotent orbits in $\mf g$ (recall that $p$ is 
good for $G^\circ$), we obtain the same result for $G^\circ \times GL_1$-equivariant cuspidal local 
systems supported  on $\mf g_N$. To get from there to $G \times GL_1$-equivariant 
cuspidal local systems boils down to extending representations of a finite group 
\[
\pi_0 (Z_{G^\circ}(X)) \cong \pi_0 (Z_{G^\circ \times GL_1}(X)) \qquad X \in \mf g_N
\] 
to a larger finite group 
\[
\pi_0 (Z_G (X)) \cong \pi_0 (Z_{G \times GL_1}(X)), 
\]
see \cite[\S 3]{AMS1}. In view of the short exact sequence
\[
1 \to \pi_0 (Z_{G^\circ}(N)) \to \pi_0 (Z_G (N)) \to G / G^\circ = \pi_0 (G) \to 1
\]
from \cite[(21)]{AMS1} and because $p$ does not divide $|\pi_0 (G)|$, this works in the same way
over $\C$ and over $k_s$.
\end{proof}

Of course Lemma \ref{lem:3.5} also applies to a quasi-Levi subgroup $M$ of $G$. That provides, for each
$q\cE, \mf m_N, K_N $ as before, versions $q\cE_s, \mf m_{N,s}, K_{N,s}$ with base field $k_s$.\\

For the transfer of $\mc D^b_{G \times GL_1} (\mf g_N)$ from $\C$ to $k_s$ we follow the strategy 
that was used to derive the decomposition theorem for equivariant perverse sheaves \cite[\S 5.3]{BeLu} 
from its non-equivariant version \cite[Th\'eor\`eme 6.2.5]{BBD}, which in turn relied on an analogue 
for varieties over finite fields. To apply the techniques from \cite[\S 6.1]{BBD}, it seems necessary 
that $G$ can be defined over a finite extension of $\Z$. In that case, all the varieties below can
also be defined over a finite extension of $\Z$.

Fix a segment $I \subset \Z$ and assume that $G \times GL_1$ is embedded in $GL_k$. It was noted in 
\cite[\S 3.1]{BeLu} that the variety $M_{|I|}$ of $k$-frames in the affine space $\mh A^{|I|+k}$ 
is an acyclic $G \times GL_1$-space. Then $G \times GL_1$ acts freely on $Q := M_{|I|} \times \mf g_N$ 
and the projection $p : Q \to \mf g_N$ is an $|I|$-acyclic resolution of $G \times GL_1$-varieties. Let 
\[
\overline{Q} = Q / (G \times GL_1) = (M_{|I|} \times \mf g_N) / (G \times GL_1)
\]
be the quotient variety. By \cite[\S 2.3.2]{BeLu}, $\mc D^I_{G \times GL_1}(\mf g_N)$ is naturally
equivalent with $\mc D^I (\overline Q |p)$, the full subcategory of $\mc D^I (\overline Q )$ made from 
all the objects that come from $\mf g_N$ via $p$ and $q : Q \to \overline{Q}$. Notice that all 
these objects are of geometric origin.

Let $\overline{Q}_{et}$ be $\overline{Q}$ with the \'etale topology. According to \cite[\S 6.1.2.B'']{BBD} 
there is a fully faithful embedding (for sheaves with $\Ql$-coefficients)
\[
\mc D^b (\overline{Q}_{et}) \longrightarrow \mc D^b (\overline Q) ,
\]
whose essential image consists of the objects of geometric origin. This restricts to an equivalence
of categories
\[
\mc D^b (\overline{Q}_{et} | p ) \longrightarrow \mc D^b (\overline Q | p) .
\]
Therefore we may replace the analytic topology on $\overline{Q}$ by the \'etale topology, and we
tacitly do that from now on.

For a variety $X$ defined over some finite extension of $\Z$, we denote by $X_s$ the base change to
$k_s$. According to \cite[\S 6.1.10 and p.159]{BBD} there is an equivalence of categories
\begin{equation}\label{eq:3.8}
\mc D^b_{\mc T,L} (\overline{Q}) \longleftrightarrow \mc D^b_{\mc T,L} (\overline{Q}_s) .
\end{equation}
Here $\mc T$ denotes an algebraic stratification of $\overline{Q}$ and $L$ means that for every
stratum a finite collection of irreducible smooth sheaves with $\Z / \ell \Z$-coefficients has been 
chosen. The subscript $\mc T,L$ refers to $(\mc T,L)$-constructible sheaves, as in \cite[\S 6.1.8]{BBD}.

Let $p_s : Q_s \to \mf g_{N,s}$ be the base change of $p$ and let $q_s : Q_s \to \overline{Q}_s$ 
be the base change of $q$, the quotient map for the free $G \times GL_1$-action. 

\begin{lem}\label{lem:3.10}
From \eqref{eq:3.8} one can obtain an equivalence of categories 
\[
\mc D^b (\overline Q | p) \longleftrightarrow \mc D^b (\overline{Q}_s | p_s) .
\]
\end{lem}
\begin{proof}
The $G \times GL_1$-orbits on $\mf g_N$ give a stratification of $Q = M_{|I|} \times \mf g_N$, and 
dividing out by $G \times GL_1$ produces a stratification $\mc T_I$ of $\overline{Q}$.
As $L_I$ we take those sheaves (of the indicated kind) on the strata of $\mc T_I$ that come from 
$\mf g_N$ via $p$ and $q$. Since there are only finitely many $G \times GL_1$-orbits in $\mf g_N$, 
this is a finite collection. Every $G \times GL_1$-equivariant sheaf on $\mf g_N$ is 
constructible with respect to the orbit stratification, so $L_I$ provides enough objects to make 
$\mc D^b (\overline Q | p)$ and $\mc D^b (\overline{Q}_s | p_s)$ constructible with respect to
$(\mc T_I,L_I)$.

On each stratum in $\mf g_N$ or $\mf g_{N,s}$ the isomorphism classes 
of irreducible equivariant sheaves with $\Z / \ell \Z$-coefficients can be put in bijection with 
the isomorphism classes of irreducible representations of the equivariant fundamental group in 
$G \times GL_1$. Hence $L_I$ has the same property for $\overline{Q}$ and $\overline{Q}_s$, 
with respect to $p,q$ and $p_s,q_s$. 

However, maybe $\mc T_I$ does not have the right geometric properties to apply 
\cite[Lemma 6.1.9 and \S 6.1.10]{BBD}. As in \cite[p. 155]{BBD}, we may refine the situation, 
by passing to larger finite extension $A$ of $\Z$ and a finer stratification $\mc T$ defined over 
$A$, whose fibers are smooth and geometrically connected. For $L$ we may take a finite 
collection as on \cite[p. 156]{BBD}, such that $(\mc T_I,L_I)$-constructibility implies 
$(\mc T,L)$-constructibility. Then \cite[\S 6.1.10]{BBD} may be used.

For this $(\mc T,L)$, $\mc D^b_{\mc T,L} (\overline{Q})$ contains $\mc D^b (\overline{Q} | p)$
as the full subcategory of objects whose pullback along $q : Q \to \overline{Q}$ is isomorphic to 
the pullback along 
\[
p : M_{|I|} \times \mf g_N \to \mf g_N
\] 
of an object of $\mc D^b (\mf g_N)$. Under the equivalence \eqref{eq:3.8}, this corresponds to 
the full subcategory of $\mc D^b_{\mc T,L} (\overline{Q}_s)$ formed by the 
objects whose pullback along $q_s$ is isomorphic to the pullback along 
\[
p_s : M_{|I|,s} \times \mf g_{N,s} \to \mf g_{N,s}
\] 
of an object of $\mc D^b (\mf g_{N,s})$. In view of the properties of $L,L_I$ with respect to 
$p_s,q_s$, this full subcategory is exactly $\mc D^b (\overline{Q}_s | p_s)$. 
\end{proof}

Lemma \ref{lem:3.10} can be restricted from $\mc D^b (\overline Q |p)$ to $\mc D^I 
(\overline Q |p)$. From that and \cite[\S 2.3.2]{BeLu} we obtain equivalences of categories
\begin{equation}\label{eq:3.10}
\mc D^I_{G \times GL_1} (\mf g_N) \longleftrightarrow \mc D^I (\overline Q |p) 
\longleftrightarrow \mc D^I (\overline{Q}_s | p_s) \longleftrightarrow 
\mc D^I_{G \times GL_1} (\mf g_{N,s}) .
\end{equation}
Recall $K_{N,s}$ from Lemma \ref{lem:3.5} and the subsequent remark.

\begin{thm}\label{thm:3.11}
Assume that $G$ can be defined over a finite extension of $\Z$.
There are equivalences of categories
\[
\begin{array}{l@{\qquad}lll}
(a) & \mc D^b_{G \times GL_1}(\mf g_N) & \longleftrightarrow & \mc D^b_{G \times GL_1}(\mf g_{N,s}) ,\\
(b) & \mc D^b_{G \times GL_1}(\mf g_N, K_N) & \longleftrightarrow &
\mc D^b_{G \times GL_1}(\mf g_{N,s}, K_{N,s}) .
\end{array}
\]
\end{thm}
\begin{proof}
(a) The equivalences \eqref{eq:3.10} can be achieved for any segment $I \subset \Z$, but we note that
$Q$ and $\overline{Q}$ depend on $|I|$. For a larger segment $I' \supset I$, the $G \times GL_1$-space 
$Q' = M_{|I'|} \times \mf g_N$ contains $Q = M_{|I|} \times \mf g_N$. Hence \eqref{eq:3.10} for
$I$ embeds canonically in \eqref{eq:3.10} for $I'$. The enables us to take the limit of all these
instances of \eqref{eq:3.10}. \\
(b) It remains to show that part (a) sends $K_N$ to $K_{N,s}$. By the definition of equivariant derived 
categories \cite{BeLu}, that means that we have to compare $p^* K_N \in \mc D^b_{G \times GL_1} (Q)$ 
with $p_s^* K_{N,s} \in \mc D^b_{G \times GL_1} (Q_s)$. Let $K'_N$ be the image of $p^* K_N$ in 
$\mc D^b_{\mc T,L}(\overline{Q})$ via the proof of Lemma \ref{lem:3.10}, so $q^* K'_N \cong p^* K_N$. 
We define $K'_{N,s}$ analogously. Since 
\[
q^* : \mc D^b (\overline{Q}) \to \mc D^b_{G \times GL_1}(Q)
\]
is an equivalence of categories, it suffices to show that \eqref{eq:3.8} sends $K'_N$ to $K'_{N,s}$.

By \cite[\S 6.1.7 and 6.1.10]{BBD} equivalences of the kind \eqref{eq:3.8} respect the 
usual derived functors associated to maps between varieties, provided $(\mc T,L)$ may be enlarged
depending on the object under consideration.

We claim that the step from $\mc D^b (\overline Q)$ to $\mc D^b_{G \times GL_1} (\mf g_N)$ respects 
equivariant induction $\mr{ind}_{P \times GL_1}^{G \times GL_1}$. To shorten the notations we
check this for $\mr{ind}_P^G$. Let $Y$ be a $P$-variety with a resolution $X$ and let 
$\mc F \in \mc D^b_P (Y)$ come from $X / P$. From the diagram
\[
Y \longleftarrow X \longrightarrow X / P = ( G \times^P X) / G \longleftarrow G \times^P X 
\longrightarrow G \times^P Y
\]
we see that $Y$ and $\mr{ind}_P^G (\mc F) \in \mc D^b_G (G\times^P Y)$ come from the same object of 
$\mc D^b (X/P)$. It follows that equivalences of the kind \eqref{eq:3.8}, when retracted to 
equivariant derived categories like in Lemma \ref{lem:3.10}, also respect equivariant induction.

We resort to the expression $K_N = \mu_{N,!} \dot{q\cE_N}$ from \eqref{eq:1.47}. That and the
constructions described in terms of \eqref{eq:1.39} only use derived functors of the kind discussed 
above. In this way we reduce our task from $K_N$ to $q\cE \in \mc D^b_{M \times GL_1} (\cC_v^M)$. 

The construction of the equivalence \eqref{eq:3.8} in \cite[\S 6.1.8--6.1.9]{BBD} uses only base
change maps, it goes via $\mc D^b_{\mc T,L}(\overline{Q}_A)$ for a suitable subring $A$ of $\C$.
Knowing that, Lemma \ref{lem:3.10} and its proof entail that $q\cE$ is sent to $q\cE_s$ by the
version of \eqref{eq:3.10} for $M$. This proves that the equivalence of categories from part (a)
sends $K_N$ to $K_{N,s}$. 
\end{proof}

\subsection{Orthogonal decomposition} \
\label{par:orthogonal}

From \cite{RiRu1} it can be expected that $\mc D^b_{G \times GL_1}(\mf g_{N,s})$ decomposes as an 
orthogonal direct sum of full subcategories of the form $\mc D^b_{G \times GL_1}(\mf g_{N,s} ,K_{N,s})$. 
Here orthogonality means that there are no nonzero morphisms between objects from different summands.
In view of Theorem \ref{thm:3.11}, the same should work over the base field $\C$.

We start with an orthogonality statement on the cuspidal level. Let $\cC_v^M, \cC_{v'}^M$ be 
nilpotent Ad$(M)$-orbits in $\mf m$ and let $q\cE, q\cE$ be $M$-equivariant irreducible 
cuspidal local systems on respectively $\cC_v^M$ and $\cC_{v'}^M$. As noted in 
\cite[\S 2.1.f]{Lus-Cusp1}, $q\cE$ and $q\cE'$ are automatically $M \times GL_1$-equivariant. Let 
$\IC (\mf m_N, q\cE)$ and $\IC (\mf m_N,q\cE' )$ be the associated $M \times GL_1$-equivariant 
intersection cohomology complexes. Notice that $\IC_{M \times GL_1}(\mf m_N,q\cE)$ is the version
of $K_N$ for $M$. 

\begin{lem}\label{lem:3.4}
Suppose that $\cC_v^M \neq \cC_{v'}^M$ or that $\cC_v^M = \cC_{v'}^M$ and $q\cE, q\cE'$ are not
isomorphic in $\mc D^b_{M \times GL_1}(\cC_v^M)$. Then
\[
\Hom^*_{\mc D^b_{M \times GL_1}(\mf m_N)} \big( \IC (\mf m_N, q\cE), \IC (\mf m_N,q\cE' )\big) = 0 .
\]
\end{lem}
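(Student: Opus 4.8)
The plan is to reduce everything to the cleanness of cuspidal local systems recorded in \eqref{eq:1.49}, together with elementary support estimates and the adjunctions among the six functors on the equivariant derived categories of \cite{BeLu}. Write $j_v \colon \cC_v^M \hookrightarrow \mf m_N$ and $j_{v'} \colon \cC_{v'}^M \hookrightarrow \mf m_N$ for the inclusions, so that by \eqref{eq:1.49} we have $\IC (\mf m_N, q\cE) = j_{v,!}\, q\cE = j_{v,*}\, q\cE$ and likewise with $q\cE'$; in particular $\IC (\mf m_N, q\cE)$ is supported on $\overline{\cC_v^M}$ and $\IC (\mf m_N, q\cE')$ on $\overline{\cC_{v'}^M}$.

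First I would dispose of the case $\cC_v^M \neq \cC_{v'}^M$. Two distinct nilpotent orbits cannot each be contained in the closure of the other (an orbit closure has a unique dense orbit), so at least one of the inclusions $\cC_v^M \subseteq \overline{\cC_{v'}^M}$, $\cC_{v'}^M \subseteq \overline{\cC_v^M}$ fails; since the Hom-space we want is not symmetric in $q\cE$ and $q\cE'$, I treat the two possibilities directly. If $\cC_v^M \not\subseteq \overline{\cC_{v'}^M}$, then (as $\cC_v^M$ is an orbit and $\overline{\cC_{v'}^M}$ a union of orbits) $\cC_v^M \cap \overline{\cC_{v'}^M} = \emptyset$; writing $\IC (\mf m_N, q\cE) = j_{v,!}\, q\cE$ and using $j_{v,!} \dashv j_v^!$ gives $\Hom^*_{\mc D^b_{M \times GL_1}(\mf m_N)}(\IC (\mf m_N, q\cE), \IC (\mf m_N, q\cE')) \cong \Hom^*_{\mc D^b_{M \times GL_1}(\cC_v^M)}(q\cE, j_v^! \IC (\mf m_N, q\cE'))$, which vanishes because $\IC (\mf m_N, q\cE')$ restricts to $0$ on the open set $\mf m_N \setminus \overline{\cC_{v'}^M} \supseteq \cC_v^M$, forcing $j_v^! \IC (\mf m_N, q\cE') = 0$. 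If instead $\cC_{v'}^M \not\subseteq \overline{\cC_v^M}$, I argue dually, writing $\IC (\mf m_N, q\cE') = j_{v',*}\, q\cE'$, using $j_{v'}^* \dashv j_{v',*}$, and noting $j_{v'}^* \IC (\mf m_N, q\cE) = 0$ since $\IC (\mf m_N, q\cE)$ is supported on $\overline{\cC_v^M}$, which misses $\cC_{v'}^M$. One of the two possibilities always holds, so the distinct-orbit case is done.

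It remains to treat $\cC_v^M = \cC_{v'}^M =: \cC$ with $q\cE$ and $q\cE'$ non-isomorphic irreducible $M \times GL_1$-equivariant local systems on $\cC$. I would factor $j := j_v = j_{v'}$ as $\cC \xrightarrow{\bar j} \overline{\cC} \xrightarrow{k} \mf m_N$ with $\bar j$ an open and $k$ a closed immersion. Cleanness gives $\IC (\mf m_N, q\cE) = k_* \bar j_!\, q\cE$ and $\IC (\mf m_N, q\cE') = k_* \bar j_*\, q\cE'$; since $k_*$ is fully faithful and $\bar j^* \bar j_! = \mr{id}$, this yields $\Hom^*_{\mc D^b_{M \times GL_1}(\mf m_N)}(\IC (\mf m_N, q\cE), \IC (\mf m_N, q\cE')) \cong \Hom^*_{\mc D^b_{M \times GL_1}(\cC)}(q\cE, q\cE')$. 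Now $\cC$ is a single $M \times GL_1$-orbit — the $GL_1$-scaling of $v$ being realised inside $M$ by a Jacobson--Morozov cocharacter — so $\mc D^b_{M \times GL_1}(\cC) \cong \mc D^b_S(\mr{pt})$ for $S = \mr{Stab}_{M \times GL_1}(v)$, and the Hom-space is the $S$-equivariant cohomology of $\mr{pt}$ with coefficients in $\Hom_\Ql(\rho, \rho')$, where $\rho, \rho'$ are the non-isomorphic irreducible representations of $\pi_0(S)$ attached to $q\cE, q\cE'$. Over a field of characteristic zero this equals $(H^*(BS^\circ) \otimes \Hom_\Ql(\rho, \rho'))^{\pi_0(S)}$, and here cuspidality of $q\cE$ enters decisively: as in the computation behind \eqref{eq:3.2} (see the proof of \cite[Lemma 2.3 and Proposition 2.4]{RiRu}), the reductive quotient of $S^\circ$ is the torus $Z(M^\circ)^\circ \times GL_1$, whose Lie algebra is $\mf t \oplus \C$, and $\pi_0(S)$ acts trivially on $H^*(BS^\circ) = H^*_{Z(M^\circ)^\circ \times GL_1}(\mr{pt})$. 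Therefore the Hom-space is $H^*_{Z(M^\circ)^\circ \times GL_1}(\mr{pt}) \otimes \Hom_{\pi_0(S)}(\rho, \rho')$, which vanishes since $\rho \not\cong \rho'$ are irreducible.

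The one genuinely non-formal ingredient is this last point: that passing to the single orbit $\cC$ splits off the plain representation-theoretic factor $\Hom_{\pi_0(S)}(\rho, \rho')$, which is forced by the structure of cuspidal pairs — the reductive part of the point-stabilizer being exactly a torus, with trivial $\pi_0$-action on its classifying-space cohomology. This is precisely the input underlying \eqref{eq:3.2}; granting it, the remainder is formal, and the case $\cC_v^M \neq \cC_{v'}^M$ needs nothing beyond cleanness \eqref{eq:1.49} and support considerations.
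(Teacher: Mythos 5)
Your proposal is correct, and for the crucial case $\cC_v^M = \cC_{v'}^M$ it coincides with the paper's argument: cleanness \eqref{eq:1.49} plus adjunction reduces the Hom-space to $\Hom^*_{\mc D^b_{M \times GL_1}(\cC_v^M)}(q\cE,q\cE')$ exactly as in \eqref{eq:3.11}, and then the passage to $\mc D^b_{Z_{M\times GL_1}(v)}(\{v\})$ kills it because $\rho \not\cong \rho'$. You are in fact slightly more careful than the paper at the last step: the paper simply writes $\Hom^*_{\mc D^b_{Z_{M\times GL_1}(v)}(\{v\})}(\rho,\rho') = \Hom_{\pi_0(Z_{M\times GL_1}(v))}(\rho,\rho')$, whereas you note that one needs $\pi_0$ of the stabilizer to act trivially on $H^*(BS^\circ)$ and justify this by the cuspidal stabilizer structure underlying \eqref{eq:3.2} (reductive quotient a centrally embedded torus $T \times GL_1$), which is the right input. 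Where you genuinely diverge is the case $\cC_v^M \neq \cC_{v'}^M$: the paper decomposes $q\cE, q\cE'$ into irreducible $M^\circ$-equivariant cuspidal constituents and invokes \cite[Theorem 3.5 and Proposition A.8]{RiRu1} (the connected-group orthogonality) to force the orbits to coincide, whereas you dispose of distinct orbits directly by a support argument: cleanness lets you write each $\IC$ as $j_!$ or $j_*$ of the local system, the two orbit closures cannot contain each other, and the appropriate adjunction $(j_!,j^!)$ or $(j^*,j_*)$ then gives vanishing. Your route is more elementary and self-contained for that case, at the modest cost of spelling out the support dichotomy; the paper's route outsources it to the known connected-group result, which it needs to cite anyway for Theorem \ref{thm:3.6}.
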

\begin{proof}
Suppose that the given Hom-space is nonzero. By the remarks at the start of Section 
\ref{sec:cuspidal}, this remains true if we replace $M \times GL_1$ by its neutral component 
$M^\circ \times GL_1$. As $M^\circ \times GL_1$-equivariant local systems, we can decompose
$q\cE = \bigoplus_i \cE_i$ and $q\cE' = \bigoplus_j \cE'_j$, where the $\cE_i$ and the 
$\cE'_j$ are irreducible and cuspidal. Then 
\begin{equation}\label{eq:3.16}
\bigoplus\nolimits_{i,j} \Hom^*_{\mc D^b_{M^\circ \times GL_1}(\mf m_N)} 
\big( \IC (\mf m_N, \cE_i), \IC (\mf m_N,\cE'_j )\big) \neq 0 .
\end{equation}
Pick indices $i,j$ for which the corresponding summand of \eqref{eq:3.16} is nonzero.
The results in \cite[Appendix A]{RiRu1} are formulated for a connected reductive group and
its centre, but they also work for a $M^\circ \times GL_1$-variety and with $Z(M^\circ)$ instead
of the centre of $M^\circ \times GL_1$. Thus, we can analyse the $Z(M^\circ)$-characters of the
$M^\circ \times GL_1$-equivariant perverse sheaves $\cE_i$ and $\cE'_j$ as in 
\cite[Appendix A]{RiRu1}. Notice that Theorem \ref{thm:3.11} enables us to apply 
this with base field $\C$ as well. From \cite[Proposition A.8]{RiRu} and \eqref{eq:3.16} one 
concludes that $\cE_i$ and $\cE'_j$ have the same $Z(M^\circ)$-character. According to 
\cite[p. 205]{Lus-Int}, that implies that $\cE_i$ is isomorphic to $\cE'_j$ in 
$\mc D^b_{M^\circ}(\mf m_N)$. 
Hence $\cC_v^{M^\circ} = \cC_{v'}^{M^\circ}$, so we may (and will) assume that $v = v'$. 

Recall from 
\eqref{eq:1.49} that $q\cE$ and $q\cE'$ are clean (on $\mf m_N$). With adjunction we compute
\begin{equation}\label{eq:3.11}
\begin{aligned}
& \Hom^*_{\mc D^b_{M \times GL_1}(\mf m_N)} \big( \IC (\mf m_N, q\cE), \IC (\mf m_N,q\cE' )\big) = \\
& \Hom^*_{\mc D^b_{M \times GL_1}(\mf m_N)} \big( \IC (\mf m_N, q\cE), j_{\mf m_N,*} q\cE' \big) = \\
& \Hom^*_{\mc D^b_{M \times GL_1}(\cC_{v}^M)} \big( j^*_{\mf m_N} \IC (\mf m_N, q\cE), q\cE' \big) =
 \Hom^*_{\mc D^b_{M \times GL_1}(\cC_{v}^M)} \big( q\cE, q\cE' \big) .
\end{aligned}
\end{equation}
Let $\rho, \rho' \in \Irr \big( \pi_0 (Z_{M \times GL_1}(v)) \big)$ be the images of $q\cE$ and $q\cE'$
under the equivalence of categories
\[
\mc D^b_{M \times GL_1}(\cC_{v}^M) \cong \mc D^b_{Z_{M \times GL_1}(v)} (\{v\}) .
\]  
Then \eqref{eq:3.11} reduces to
\[
\Hom^*_{\mc D^b_{Z_{M \times GL_1}(v)} (\{v\}) } \big( \rho, \rho' \big) =
\Hom_{\pi_0 (Z_{M \times GL_1} (v))} (\rho, \rho') .
\]
Since $q\cE$ and $q\cE'$ are not isomorphic, $\rho$ and $\rho'$ are not isomorphic, and this expression
vanishes. That contradicts the assumption at the start of the proof.
\end{proof}

Consider the collection of all cuspidal quasi-supports $(M,\cC_v^M,q\cE)$ for $G$. Since each
$\mf m_N$ admits only very few irreducible $M$-equivariant cuspidal local systems
\cite[Introduction]{Lus-Int}, there are only finitely many $G$-conjugacy classes of cuspidal
quasi-supports for $G$. We note that by Lemma \ref{lem:3.5} the classification is the same over
the ground fields $\C$ and $k_s$. Each such conjugacy class $[M,\cC_v^M,q\cE ]_G$ gives rise to a full
triangulated subcategory
\[
\mc D^b_{G \times GL_1}(\mf g_N ,K_N) = 
\mc D^b_{G \times GL_1} \big( \mf g_N ,\mc I_{P \times \C^\times, \mf m_N}^{G \times \C^\times} 
\IC_{M \times GL_1}(\mf m_N,q\cE) \big) ,
\]
see \eqref{eq:1.45} for the equality. 

\begin{thm}\label{thm:3.6}
There is an orthogonal decomposition
\[
\mc D^b_{G \times GL_1} (\mf g_N) = 
\bigoplus\nolimits_{[M,\cC_v^M,q\cE]_G} \mc D^b_{G \times GL_1} \big( \mf g_N ,\mc I_{P \times 
\C^\times, \mf m_N}^{G \times \C^\times}  \IC_{M \times GL_1}(\mf m_N,q\cE) \big) .
\]
The same holds over the ground field $k_s$.
\end{thm}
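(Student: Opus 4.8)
The plan is to reduce the statement to two assertions: (i) the subcategories $\mc D^b_{G \times GL_1}(\mf g_N ,K_N)$ on the right together generate $\mc D^b_{G \times GL_1}(\mf g_N)$ as a triangulated category, and (ii) they are pairwise orthogonal; a standard argument (each object being, uniquely, the direct sum of its projections to the summands) then upgrades ``generate $+$ pairwise orthogonal'' to the claimed orthogonal direct sum decomposition. For (i), recall that $\mc D^b_{G \times GL_1}(\mf g_N)$ is generated by the intersection cohomology complexes $\IC(\overline{\cC},\mc F)$ of irreducible $G \times GL_1$-equivariant local systems $\mc F$ on nilpotent $\Ad(G)$-orbits $\cC$, together with all their shifts; there are only finitely many such pairs $(\cC,\mc F)$, since $\mf g_N$ has finitely many nilpotent orbits and $\pi_0(Z_{G \times GL_1}(x)) \cong \pi_0(Z_G(x))$ for $x \in \cC$ (split off the $GL_1$-factor by a Jacobson--Morozov cocharacter). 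By the generalized Springer correspondence of \cite{Lus-Int}, in the $G \times GL_1$-equivariant form set up in Paragraph \ref{par:geomConst} (and over positive characteristic in \cite{Rid}), each such $\IC(\overline{\cC},\mc F)$ occurs, up to shift, as a direct summand of $K_N = \mc I_P^G\,\IC_{M \times GL_1}(\mf m_N,q\cE)$ for \emph{exactly one} $G$-conjugacy class $[M,\cC_v^M,q\cE]_G$ (see \eqref{eq:1.45} for $K_N$). Hence every generator of $\mc D^b_{G \times GL_1}(\mf g_N)$ lies in one of the subcategories $\mc D^b_{G \times GL_1}(\mf g_N ,K_N)$, so they jointly generate.

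For (ii), fix two non-conjugate cuspidal quasi-supports $(M,\cC_v^M,q\cE)$ and $(M',\cC_{v'}^{M'},q\cE')$ with associated $K_N$ and $K_N'$. Since these objects are semisimple (Lemma \ref{lem:1.17}) and each subcategory is generated by the simple summands of the corresponding $K_N$, it suffices to show $\Hom^*_{\mc D^b_{G \times GL_1}(\mf g_N)}(K_N,K_N') = 0$; the opposite vanishing follows by interchanging the two triples. Writing $K_N = \mc I_P^G\,\IC_{M \times GL_1}(\mf m_N,q\cE)$ and likewise for $K_N'$, and using that $\mc R_{P'}^G$ is right adjoint to $\mc I_{P'}^G$, this Hom-space becomes
\[
\Hom^*_{\mc D^b_{M' \times GL_1}(\mf m'_N)}\big( \mc R_{P'}^G\,\mc I_P^G\,\IC_{M \times GL_1}(\mf m_N,q\cE),\, \IC_{M' \times GL_1}(\mf m'_N,q\cE') \big).
\]
Now invoke the Mackey-type formula decomposing $\mc R_{P'}^G\,\mc I_P^G$ as a sum, over the relevant double cosets $w$, of composites $\mc I_{M' \cap {}^w M}^{M'} \circ {}^w(-) \circ \mc R_{{}^{w^{-1}}M' \cap M}^M$. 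The defining property of cuspidal local systems --- vanishing under every proper parabolic restriction --- kills every term with ${}^{w^{-1}}M' \cap M \subsetneq M$; after moving $\mc I$ across by adjunction, cuspidality of $q\cE'$ kills every term with ${}^w M \cap M' \subsetneq M'$. The surviving terms satisfy ${}^w M = M'$, and for them the Hom-space collapses to $\Hom^*_{\mc D^b_{M' \times GL_1}(\mf m'_N)}\big( \IC_{M' \times GL_1}(\mf m'_N,{}^w q\cE),\, \IC_{M' \times GL_1}(\mf m'_N,q\cE') \big)$. Since $(M,\cC_v^M,q\cE)$ and $(M',\cC_{v'}^{M'},q\cE')$ are not $G$-conjugate while ${}^w M = M'$, the local systems ${}^w q\cE$ and $q\cE'$ are non-isomorphic, so Lemma \ref{lem:3.4} forces each such Hom-space to vanish. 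Therefore $\Hom^*(K_N,K_N') = 0$ and the subcategories are pairwise orthogonal.

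The main obstacle is the one input I have been treating as ``standard'': the Mackey-type formula for parabolic induction and restriction on the nilpotent cone, in the $G \times GL_1$-equivariant derived category and for possibly disconnected $G$. Establishing it requires stratifying the fibre product $(G \times_P (\mf m_N \oplus \mf u)) \times_{\mf g_N} (G \times_{P'}(\mf m'_N \oplus \mf u'))$ by $(P',P)$-double cosets and identifying each stratum with a composite of the centralizer-type parabolic functors from Paragraph \ref{par:centralizer} --- Lemmas \ref{lem:1.5}--\ref{lem:1.8} provide a one-sided model of precisely this analysis. In practice the cleanest route is the one already used in Paragraph \ref{par:triangulated}: transfer the whole question, via the equivalences \eqref{eq:3.10}, to algebraic varieties over an algebraically closed field of good characteristic for $G$, where the factorization of $\mc D^b_{G \times GL_1}(\mf g_N)$ along cuspidal supports, together with the Mackey formula underpinning it, is the content of \cite{RiRu1}. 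The hypotheses needed there are exactly what has been verified above --- semisimplicity of $K_N$ (Lemma \ref{lem:1.17}), the generalized Springer correspondence in $G \times GL_1$-equivariant form, and the orthogonality of distinct cuspidal data (Lemma \ref{lem:3.4}) --- so one only has to check that \cite{RiRu1} carries over to disconnected $G \times GL_1$, which is routine given \cite{AMS1,AMS2}. With (i) and (ii) in hand, the orthogonal decomposition follows.
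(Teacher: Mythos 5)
Your proposal is correct and is essentially the paper's proof: both come down to translating Rider--Russell's orthogonal decomposition \cite[Theorem 3.5]{RiRu1} to the $G \times GL_1$-equivariant, possibly disconnected setting, with Lemma \ref{lem:3.4} replacing the central-character argument in the case of a common (quasi-)Levi. The only cosmetic difference is that you route the reduction through the positive-characteristic equivalences \eqref{eq:3.10} and spell out the generation/Mackey/cuspidality mechanics, whereas the paper simply notes that the proof of \cite{RiRu1} is valid in this generality except for that one step.
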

\begin{proof}
Over $k_s$ is the translation of \cite[Theorem 3.5]{RiRu1} to our setting. Almost the entire proof in 
\cite[\S 2--3]{RiRu1} is valid in our generality, only the argument with central characters (near the 
end of the proof of \cite[Theorem 3.5]{RiRu1}) does not work any more. We extend that to our setting 
with Lemma \ref{lem:3.4}. 

We may pass to the base field $\C$ with Theorem \ref{thm:3.11}. 
\end{proof}

\subsection{An equivalence of triangulated categories} \
\label{par:triangulated}

We aim to show that $\mc D^b_{G \times GL_1}(\mf g_{N,s}, K_{N,s})$ is equivalent with 
$\mc D^b (\mh H - \Mod_{\mr{fgdg}})$, as triangulated categories. We follow the strategy outlined in 
\cite[\S 4]{RiRu}, based on \cite{Rid}, but with $G \times GL_1$ instead of $G$. We need the following 
objects as substitutes for objects appearing in the derived generalized Springer correspondence from 
\cite{Rid,RiRu}:
\[
\begin{array}{l|l|l}
\text{our setting} & \text{setting from \cite{RiRu}} & \text{setting from \cite{Rid}} \\
\hline
\mf g_N & \mc N & \mc N \\
\IC (\mf m_N, q\cE) & \IC_{\mb c} & \IC (\{0\}, \overline{\Q_\ell}) \\
K_N & \mh A_{\mb c} & \mb A \\
\C [W_{q\cE} ,\natural_{q\cE}] & \Ql [W(L)] & \Ql [W] \\
H^*_{G \times \C^\times} (\dot{\mf g_N}) \cong \mc O (\mf t \oplus \C) & H_L (\mc O_L) \cong S \mf z^* &
H^*_G (G/B) \cong S \mf h^* \\
\mh H (G,M,q\cE) & \Ql [W(L)] \ltimes S \mf z^* &
\mc A_G = \Ql [W(L)] \ltimes S \mf h^* \\
\mc I_{P \times \C^\times, \mf m_N}^{G \times \C^\times}  & \mc I_P^G & \Psi \\
\mc R_{P \times \C^\times, \mf m_N}^{G \times \C^\times}  & \mc R_P^G & \Phi \\
\mc D^b_{M \times \C^\times} (\mf m_N, \IC (\mf m_N, q\cE)) & \mc D^b_L (\mc N_L ,\IC_{\mb c}) \cong
\mc D^b_Z (\pt) & \mc D^b_G (G/B) \cong \mc D_T^b (\pt) 
\end{array}
\]
For the third till sixth lines of the table we refer to, respectively, \eqref{eq:1.45},
\eqref{eq:1.10}, \eqref{eq:1.50} and Theorem \ref{thm:1.2}.c. To justify the last line of the table, 
we note that the proof of \cite[Lemma 2.3 and Proposition 2.4]{RiRu} shows that
\begin{equation}\label{eq:3.2}
\mc D^b_{M \times \C^\times} (\mf m_N, \IC (\mf m_N, q\cE)) \cong
\mc D^b_{Z(M)^\circ \times \C^\times} (\pt) = \mc D^b_{T \times \C^\times} (\pt) .
\end{equation}
As explained in \cite[\S 3.2]{RiRu}, the $M$-equivariant cuspidal local system $q\cE_s$ on 
$\mc C_{v,s}^M \subset \mf m_{N,s}$ admits a version over a finite field $\F_q$, such that a Frobenius
element of Gal$(k_s / \F_q)$ acts trivially (after base change to $k_s$). Then everything can be set 
up over $\F_q$ with mixed sheaves, as in \cite[\S 4--5]{Rid}. Like in \cite{Rid,RiRu}, we indicate the
analogous objects over $\F_q$ with a subscript $\circ$. 

\begin{lem}\label{lem:3.8}
There are algebra isomorphisms 
\[
\Ql [W_{q\cE}, \natural_{q\cE}] \cong \End_{\mc D^b_{G \times GL_1} (\mf g_{N,s})}(K_{N,s}) \cong
\End_{\mc D^b_{G \times GL_1} (\mf g_{N,\circ})}(K_{N,\circ}) ,
\]
and the Frobenius action on the middle term is trivial.
\end{lem}
\begin{proof}
The first isomorphism can be shown in the same way as \eqref{eq:1.10}. This uses among others a notion
of good and bad $P-P$ double cosets in $G$, or equivalently $G$-orbits in $G/P \times G/P$, extended
from \cite{Lus-Cusp1} to disconnected $G$ in \cite[proof of Proposition 2.6]{AMS2}. With this notion,
\cite[\S 3.3]{RiRu} generalizes to disconnected $G$ over $\F_q$. In particular 
\cite[Proposition 3.7]{RiRu} shows that 
\[
\dim_{\Ql} \End_{\mc D^b_{G \times GL_1} (\mf g_{N,\circ})}(K_{N,\circ}) = |W_{q\cE}| .
\]
Next \cite[Proposition 3.9]{RiRu} proves that Gal$(k_s/\F_q)$ acts trivially on\\
$\End_{\mc D^b_{G \times GL_1} (\mf g_{N,s})}(K_{N,s})$, and shows that the natural map 
\[
\End_{\mc D^b_{G \times GL_1} (\mf g_{N,\circ})}(K_{N,\circ}) \to
\End_{\mc D^b_{G \times GL_1} (\mf g_{N,s})}(K_{N,s}) 
\]
induced by base change is an algebra isomorphism.
\end{proof}

Like in Theorem \ref{thm:1.2}, we obtain
\[
\End^*_{\mc D^b_{G \times GL_1} (\mf g_{N,s})} (K_{N,s}) = \mh H_{\Ql} =
\mh H_{\Ql} (G,M,q\cE) ,
\]
the version of $\mh H$ with scalars $\Ql$ instead of $\C$. With that settled, the proof of \cite[Theorem 
4.1]{RiRu} applies to $(\mf g_{N,\circ}, K_{N,\circ})$. It provides a triangulated category 
\[
K^b \mr{Pure}_{G \times GL_1} (\mf g_{N,\circ}, K_{N,\circ}) ,
\]
which is a mixed version of $\mc D^b_{G \times GL_1} (\mf g_{N,s}, K_{N,s})$ in the sense of 
\cite[Definition 4.2]{Rid}. Next \cite[Theorem 4.2]{RiRu} and \cite[\S 6]{Rid} generalize readily
to our setting (but with objects over the ground field $\F_q$). 
In particular these entail an equivalence of triangulated categories
\begin{equation}\label{eq:3.4}
K^b \mr{Pure}_{G \times GL_1} (\mf g_{N,\circ}, K_{N,\circ}) \cong
\mc D^b (\mh H_{\Ql} -\Mod_{\mr{fgdg}}) .
\end{equation}
Recall the notion of Koszulity for differential graded algebras from \cite{BGS}.

\begin{lem}\label{lem:3.1}
\enuma{
\item The algebra $\mh H_{\Ql}$ is Koszul. 
\item The Koszul dual $E(\mh H_{\Ql})$ of $\mh H_{\Ql}$ is a finite dimensional graded algebra.
}
\end{lem}
\begin{proof}
(a) Consider the degree zero part $\mh H_{\Ql,0} = \Ql [W_{q\cE},\natural_{q\cE}]$ as 
$\mh H_{\Ql}$-module, annihilated by all terms of positive degree. We have to find a resolution of 
$\mh H_{\Ql,0}$ by projective graded modules $P^n$, such that each $P^n$ is generated by its part
in degree $n$. We will use that the multiplication map
\[
\mh H_{\Ql,0} \otimes_{\Ql} \mc O (\mf t \oplus \mh A^1) \to \mh H_{\Ql}
\]
is an isomorphism of graded vector spaces. Start with the standard Koszul resolution for 
$\mc O (\mf t \oplus \mh A^1)$:
\[
\Ql \leftarrow \mc O (\mf t \oplus \mh A^1) \leftarrow \mc O (\mf t \oplus \mh A^1) \otimes_{\Ql}
\bigwedge\nolimits^1 (\mf t \oplus \mh A^1) \leftarrow \mc O (\mf t \oplus \mh A^1) \otimes_{\Ql}
\bigwedge\nolimits^2 (\mf t \oplus \mh A^1) \leftarrow \cdots
\]
It is graded so that $\mc O (\mf t \oplus \mh A^1)_d \otimes_{\Ql} \bigwedge^n (\mf t \oplus \mh A^1)$
sits in degree $d+n$. Define
\[
P^n = \mr{ind}_{\mc O (\mf t \oplus \mh A^1)}^{\mh H_{\Ql}} \big( \mc O (\mf t \oplus \mh A^1) \otimes_{\Ql}
\bigwedge\nolimits^n (\mf t \oplus \mh A^1) \big) = 
\mh H_{\Ql} \otimes_{\Ql} \bigwedge\nolimits^n (\mf t \oplus \mh A^1) .
\]
Then $P^n = \mh H_{\Ql} P^n_n$ and we have a graded projective resolution \vspace{-1mm}
\[
P^* \to \mr{ind}_{\mc O (\mf t \oplus \mh A^1)}^{\mh H_{\Ql}} (\Ql) = \mh H_{\Ql,0} .
\]
Thus $\mh H_{\Ql}$ fulfills \cite[Definition 1.1.2]{BGS} and is Koszul.\\
(b) In \cite[\S 1.2]{BGS}, the Koszul dual $E(\mh H_{\Ql})$ is defined as
$\Ext^*_{\mh H_{\Ql}} (\mh H_{\Ql,0}, \mh H_{\Ql,0})$. This is easily computed as graded vector space:
\begin{align*}
E (\mh H_{\Ql}) & = \Ext^*_{\mh H_{\Ql}} \big( \mr{ind}_{\mc O (\mf t \oplus \mh A^1)}^{\mh H_{\Ql}} \Ql,
\mh H_{\Ql,0} \big) \\
& = \Ext^*_{\mc O (\mf t \oplus \mh A^1)} \big( \Ql, \mh H_{\Ql,0} \big) \\
& = \Ext^*_{\mc O (\mf t \oplus \mh A^1)} \big( \Ql, \Ql \big) \otimes_{\Ql} \mh H_{\Ql,0} \\
& = \bigwedge\nolimits^* (\mf t \oplus \mh A^1) \otimes_{\Ql} \mh H_{\Ql,0} .
\end{align*}
Note that both $\bigwedge\nolimits^* (\mf t \oplus \mh A^1)$ and $\mh H_{\Ql,0}$ have finite dimension.
\end{proof}

The opposite algebra of $\mh H_{\Ql}$ is of the same kind, namely $\mh H_{\Ql} (G,M,q\cE^\vee)$ for 
the dual local system $q\cE^\vee$. Hence Lemma \ref{lem:3.1} also holds for $\mh H_{\Ql}^{op}$, which
means that we may use the results of \cite{BGS} with right modules instead of left modules.

Lemma \ref{lem:3.1} entails that $\mc D^b (\mh H_{\Ql} -\Mod_{\mr{fgdg}})$ admits the ``geometric
t-structure" from \cite[\S 2.13]{BGS}. Its heart is equivalent with $E(\mh H_{\Ql})-\Mod_{g}$, the
abelian category of graded right $E(\mh H_{\Ql})$-modules. Next \cite[Theorem 7.1]{Rid} shows that
\eqref{eq:3.4} sends this t-structure to the ``second t-structure" on $K^b \mr{Pure}_{G \times GL_1}
(\mf g_{N,\circ}, K_{N,\circ})$ from \cite[\S 4.2]{Rid}. In particular the heart of the second 
t-structure is equivalent with the heart of the geometric t-structure:
\begin{equation}\label{eq:3.5}
\mr{Perv}_{KD}(\mf g_{N,\circ},K_{N,\circ}) \cong E(\mh H_{\Ql})-\Mod_{g} .
\end{equation}
Let $F_\circ \in \mr{Perv}_{KD}(\mf g_{N,\circ},
K_{N,\circ})$ be the image of $E(\mh H_{\Ql})$ and let $F_s$ be the image of $F_\circ$ in
$\mc D^b_{G \times GL_1}(\mf g_{N,s}, K_{N,s})$ via \cite[Theorem 4.1]{RiRu}.
We note that by Lemma \ref{lem:3.1}.b and \eqref{eq:3.5}, 
\begin{equation}\label{eq:3.13}
\Hom_{\mc D^b_{G \times GL_1}(\mf g_{N,s},K_{N,s})} (F_s ,F_s [m]) 
\text{ is nonzero for only finitely many } m \in \Z .
\end{equation}
Choose a resolution of $E(\mh H_{\Ql})$ by free (graded right) modules of finite rank, that is 
possible by Lemma \ref{lem:3.1}.b. Via \eqref{eq:3.5}, that yields a projective resolution
\[
\cdots \to P_\circ^{-2} \to P_\circ^{-1} \to P_\circ^0 \to K_{N,\circ} 
\]
in $\mr{Perv}_{KD}(\mf g_{N,\circ},K_{N,\circ})$. Let $P_s^n$ be the image of $P_\circ^n$ in 
$\mc D^b_{G \times GL_1}(\mf g_{N,s}, K_{N,s})$. Then each $P_\circ^n$ (resp. $P_s^n$) is a 
direct sum of finitely many copies of $F_\circ$ (resp $F_s$). If $I \subset \Z$ is 
a segment such that $F_s$ lies in $\mc D^I_{G \times GL_1}(\mf g_{N,s},K_{N,s})$, then all 
$P_s^n$ belong to $\mc D^I_{G \times GL_1}(\mf g_{N,s},K_{N,s})$. This yields a chain complex
\begin{equation}\label{eq:3.6}
\cdots \to P_s^{-2} \to P_s^{-1} \to P_s^0 \to K_{N,s} , 
\end{equation}
where all objects and all morphisms come from $\mc D^I_{G \times GL_1}(\mf g_{N,s},K_{N,s})$. 
However, the entire complex is usually unbounded, because it is likely that $P_\circ^n$ and 
$P_s^n$ are nonzero for all $n \in \Z_{\leq 0}$. We define a graded algebra 
$\mc R = \bigoplus_{n \in \Z_{\geq 0}} \mc R^n$ with
\[
\mc R^n = \prod\nolimits_{k,j \in \Z_{\leq 0}} 
\Hom_{\mc D^b_{G \times GL_1}(\mf g_{N,s},K_{N,s})} (P_s^k ,P_s^j [n+k-j]) .
\]
The multiplication in $\mc R$ comes from composition in 
$\mc D^b_{G \times GL_1}(\mf g_{N,s},K_{N,s})$. For fixed $k$ and $n$, \eqref{eq:3.13} shows 
that only finitely many $j$ give a nonzero contribution to the part of $\mc R^n$
starting in $P_s^k$. This guarantees that the multiplication map $\mc R^n \times \mc R^m \to
\mc R^{n+m}$ is well-defined. 
For $M \in \mc D^b_{G \times GL_1}(\mf g_{N,s},K_{N,s})$ and $n \in \Z_{\geq 0}$ we put
\[
{\mc Hom}^n (P_s^* ,M) = \prod\nolimits_{j \in \Z_{\leq 0}} 
\Hom_{\mc D^b_{G \times GL_1}(\mf g_{N,s})} (P_s^j, M[j+n]) ,
\]
so that we obtain a functor 
\[
{\mc Hom}^* (P_s^* ,?) = \bigoplus\nolimits_{n \geq 0} {\mc Hom}^n (P_s^*,?) : 
\mc D^b_{G \times GL_1}(\mf g_{N,s},K_{N,s}) \to \mc D^b (\mc R -\Mod_{\mr{fgdg}}) .
\]
By \cite[Theorem 7.4]{Rid} and \cite[Proposition 4]{Sch}, $\mc R$ is quasi-isomorphic to its own
cohomology ring and 
\[
H^* (\mc R) \cong \End^*_{\mc D^b_{G \times GL_1}(\mf g_{N,s})} (K_{N,s}) \cong
\mh H_{\Ql} .
\]
Moreover, by \cite[Remark 7.5]{Rid} there exists a quasi-isomorphism $\mc R \to \mh H_{\Ql}$.
According to \cite[Theorem 10.12.5.1 and \S 11.1]{BeLu}, that induces an equivalence of categories
\[
\otimes^L_{\mc R} \mh H_{\Ql} :\; \mc D^b (\mc R -\Mod_{\mr{fgdg}}) \longrightarrow 
\mc D^b (\mh H_{\Ql} -\Mod_{\mr{fgdg}}) .
\]
Combining all the above, we get an additive functor
\begin{equation}\label{eq:3.7}
\otimes^L_{\mc R} \mh H_{\Ql} \circ {\mc Hom}^* (P_s^* ,?) : \; \mc D^b_{G \times GL_1}
(\mf g_{N,s},K_{N,s}) \longrightarrow \mc D^b (\mh H_{\Ql} -\Mod_{\mr{fgdg}}) .
\end{equation}
We want to show that \eqref{eq:3.7} is triangulated. An analogous statement is \cite[Lemma 7.7]{Rid}, 
which is proven in \cite[Appendix A]{Rid}. We cannot apply \cite[\S A.1]{Rid} directly to $\mh H_{\Ql}$, 
because for Hecke algebras averaging $\mc O (\mf t \oplus \mh A^1)$-module homomorphisms (between 
$\mh H$-modules) over $W_{q\cE}$ does 
not preserve the $\mc O (\mf t \oplus \mh A^1)$-linearity. Fortunately, from \cite[\S A.2]{Rid} only 
the last three lines rely on this averaging over a Weyl group. We can apply \cite[\S A.2]{Rid} with 
the group $G^\circ \times GL_1$, so that the $G^\circ \times GL_1$-equivariant cohomology of the 
variety of Borel subgroups is isomorphic to $\mc O (\mf t \oplus \mh A^1)$, like in \eqref{eq:3.9}. 
These arguments show the following: the functor \eqref{eq:3.7} sends any exact triangle in 
$\mc D^b_{G \times GL_1}(\mf g_{N,s},K_{N,s})$ to a triangle 
\begin{equation}\label{eq:3.12}
L \to M \to N \to L[1] \quad \text{in} \quad \mc D^b (\mh H_{\Ql} -\Mod_{\mr{fgdg}}),
\end{equation}
whose image in $\mc D^b (\mc O (\mf t \oplus \mh A^1) -\Mod_{\mr{fgdg}})$ is an exact triangle.

\begin{lem}\label{lem:3.9}
The triangle \eqref{eq:3.12} is already exact in $\mc D^b (\mh H_{\Ql} -\Mod_{\mr{fgdg}})$.
\end{lem}
\begin{proof}
We recall from \cite[10.12.2.9]{BeLu} that there is an equivalence of categories
\[
\mc D^b (\mh H_{\Ql} -\Mod_{\mr{fgdg}}) \cong {\mc KP} (\mh H_{\Ql}) ,
\]
where the right hand side denotes the homotopy category of $\mc K$-projective differential graded
(right) $\mh H_{\Ql}$-modules. The same holds for $\mc O (\mf t \oplus \mh A^1)$. Recall 
that the cone of a morphism $f : L \to M$ in ${\mc KP} (\mh H_{\Ql})$ is $M \oplus L[1]$ with
the differential $(d_M + f[1],-d_L)$. It comes with natural maps $\pi_1 : M \to \mr{cone}(f)$
and $\pi_2 : \mr{cone}(f) \to L[1]$. Phrased in these terms, \eqref{eq:3.12} yields a 
commutative diagram
\begin{equation}\label{eq:3.14}
\begin{array}{ccccccc}
L & \xrightarrow{f} & M & \xrightarrow{\pi_1} & \mr{cone}(f) & \xrightarrow{\pi_2} & L[1] \\
|| &  & | | & & \downarrow \phi & & || \\
L & \xrightarrow{f} & M & \xrightarrow{g_1} & N & \xrightarrow{g_2} & L[1]
\end{array},
\end{equation}
where all the modules and the horizontal morphisms belong to $\mc{KP} (\mh H_{\Ql})$ and 
$\phi$ is an isomorphism in $\mc{KP} (\mc O (\mf t \oplus \mh A^1))$. We need to prove that $\phi$ 
can be replaced by an isomorphism in $\mc{ KP} (\mh H_{\Ql})$, for then \eqref{eq:3.12} becomes 
an exact triangle.

The resolutions constructed in \cite[\S 10.12.2.4]{BeLu} show that every object of \\
$\mc D^b (\mh H_{\Ql} -\Mod_{\mr{fgdg}})$ can be represented by a free $\mh H_{\Ql}$-module, that
is, an object of $\mc{ KP} (\mh H_{\Ql})$ of the form $V \otimes \mh H_{\Ql}$ with $V$ a graded
$\Ql$-vector space (and some differential). Hence we may assume that all the objects in 
\eqref{eq:3.14} are free differential graded $\mh H_{\Ql}$-modules, say
\[
L = V_L \otimes \mh H_{\Ql},\quad M = V_M \otimes \mh H_{\Ql},\quad N = V_N \otimes \mh H_{\Ql}.
\]
With the $\mc K$-projectivity \cite[\S 10.12.2]{BeLu} we can easily compute some Hom-spaces:
\begin{align*}
& \Hom_{\mc{ KP}(\mh H_{\Ql})} (V \otimes \mh H_{\Ql}, V' \otimes \mh H_{\Ql}) \cong
\Hom_{\mc{ KP} (\Ql)} (V,V') \otimes \mh H_{\Ql} \\
& \Hom_{\mc{ KP}(\mc O (\mf t \oplus \mh A^1))} (V \otimes \mh H_{\Ql}, V' \otimes \mh H_{\Ql}) = \\
& \Hom_{\mc{ KP}(\mc O (\mf t \oplus \mh A^1))} \big( V \otimes \Ql [W_{q\cE},\natural_{q\cE}] \otimes
\mc O (\mf t \oplus \mh A^1), V' \otimes \Ql [W_{q\cE},\natural_{q\cE}] 
\otimes \mc O (\mf t \oplus \mh A^1) \big) = \\
& \Hom_{\mc{ KP} (\Ql)} (V,V') \otimes \mr{End}_{\Ql} \big( \Ql [W_{q\cE},\natural_{q\cE}] \big) \otimes
 \mc O (\mf t \oplus \mh A^1)
\end{align*}
Notice that $\mr{End}_{\Ql} \big( \Ql [W_{q\cE},\natural_{q\cE}] \big)$ is naturally 
a $W_{q\cE}$-representation, with as invariants the operators from left multiplication by elements of 
$\Ql [W_{q\cE},\natural_{q\cE}]$. Now we see that averaging over $W_{q\cE}$ provides canonical 
surjections
\begin{align*}
& \mr{Av} : \mr{End}_{\Ql} \big( \Ql [W_{q\cE},\natural_{q\cE}] \big) \to
\mr{End}_{\Ql [W_{q\cE},\natural_{q\cE}]} \big( \Ql [W_{q\cE},\natural_{q\cE}] \big) =
\Ql [W_{q\cE},\natural_{q\cE}] ,\\
& \mr{Av} : \Hom_{\mc{ KP}(\mc O (\mf t \oplus \mh A^1))} (V \otimes \mh H_{\Ql}, V' \otimes \mh H_{\Ql}) 
\to \Hom_{\mc{ KP}(\mh H_{\Ql})} (V \otimes \mh H_{\Ql}, V' \otimes \mh H_{\Ql}) 
\end{align*}
For $f' \in \Hom_{\mc{ KP}(\mc O (\mf t \oplus \mh A^1))} (V \otimes \mh H_{\Ql}, V' \otimes \mh H_{\Ql})$,
$x \in V \otimes \Ql [W_{q\cE},\natural_{q\cE}]$ and\\ 
$T \in \mc O (\mf t \oplus \mh A^1)$, this works out as
\[
\mr{Av} (f') (x T) = |W_{q\cE}|^{-1} \sum\nolimits_{w \in W_{q\cE}} f' (x N_w^{-1}) N_w T .
\]
In the same notation we consider $m = \sum_i v_i \otimes x_i T_i \in M$ and $\pi_1 (m) \in \mr{cone}(f)$.
By the commutativity of \eqref{eq:3.14}
\begin{align*}
\mr{Av}(\phi) (\pi_1 (m)) & = |W_{q\cE}|^{-1} \sum_{w \in W_{q\cE}} \sum_i \phi (\pi_1 (v_i \otimes x_i
N_w^{-1})) N_w T_i \\
& = |W_{q\cE}|^{-1} \sum_{i,w} g_1 (v_i \otimes x_i N_w^{-1}) N_w T_i \;=\;
|W_{q\cE}|^{-1} \sum_{i,w} g_1 (v_i \otimes x_i T_i ) \\
& = \sum\nolimits_i g_1 (v_i \otimes x_i T_i ) = g_1 (m) = \phi (\pi_1 (m)) .
\end{align*}
For $c = \sum_j c_j \otimes x_j T_j \in \mr{cone}(f)$ we compute
\begin{align*}
g_2 \mr{Av}(\phi) (c) & = |W_{q\cE}|^{-1} \sum_{w \in W_{q\cE}} \sum_j g_2 (\phi (c_j \otimes x_j N_w^{-1})
N_w T_j) \\
& = |W_{q\cE}|^{-1} \sum_{j,w} \pi_2 (c_j \otimes x_j N_w^{-1}) N_w T_j \;=\;
|W_{q\cE}|^{-1} \sum_{j,w} \pi_2 (c_j \otimes x_j T_j) \\
& = \sum\nolimits_j \pi_2 (c_j \otimes x_j T_j) = \pi_2 (c) = g_2 \phi (c) .
\end{align*}
The calculations show that the diagram \eqref{eq:3.14} remains commutative if we replace $\phi$ by
the $\mh H_{\Ql}$-linear map Av$(\phi)$. Finally, the proof of \cite[Proposition A.3]{Rid} 
shows that Av$(\phi)$ is an isomorphism in $\Hom_{\mc{ KP}(\mh H_{\Ql})} (\mr{cone}(f),N)$.
\end{proof}

\begin{thm}\label{thm:3.2}
Transfer the setup of Paragraph \ref{par:geomConst} to groups and varieties over an algebraically
closed field of good characteristic for $G$, and use $\Ql$ as coefficient field for all sheaves
and representations. 
\enuma{
\item The functor \eqref{eq:3.7} is an equivalence between the triangulated categories\\
$\mc D^b_{G \times GL_1}(\mf g_{N,s}, K_{N,s})$ and $\mc D^b \big( \mh H_{\Ql}(G,M,q\cE) -\Mod_{\mr{fgdg}} \big)$. 
\item There is an equivalence of triangulated categories
\[
\mc D^b_{G \times GL_1}(\mf g_{N,s}) \longrightarrow \bigoplus\nolimits_{[M,\cC_v^M,q\cE]_G}
\mc D^b \big( \mh H_{\Ql} (G,M,q\cE) -\Mod_{\mr{fgdg}} \big) .
\]
}
\end{thm}
\begin{proof}
(a) The proof of \cite[Theorem 4.3]{RiRu} explains why the arguments from \cite[\S 7]{Rid} generalize 
to our setting. These results show that \eqref{eq:3.7} commutes with the shift operator and sends 
$K_{N,s}$ to $\mh H_{\Ql}$. By Lemma \ref{lem:3.9}, the functor \eqref{eq:3.7} is triangulated.
We conclude with an application of Beilinson's lemma (in the version from \cite[Lemma 6]{Sch}).\\
(b) This follows from part (a) and Theorem \ref{thm:3.6}.
\end{proof}

Combining Theorems \ref{thm:3.11} and \ref{thm:3.2}, we have proven:

\begin{thm}\label{thm:3.3}
Assume that $G$ can be defined over a finite extension of $\Z$. 
\enuma{
\item There exists an equivalence of categories
\[
\mc D^b_{G \times GL_1}(\mf g_N, K_N) \longrightarrow 
\mc D^b \big( \mh H_{\Ql}(G,M,q\cE) -\Mod_{\mr{fgdg}} \big) ,
\]
which sends $K_N$ to $\mh H_{\Ql}(G,M,q\cE)$.

The same holds with the coefficient field $\C$ instead of $\Ql$.
\item There exists an equivalence of categories
\begin{align*}
\mc D^b_{G \times GL_1} (\mf g_N) \longrightarrow & \bigoplus\nolimits_{[M,\cC_v^M,q\cE]_G}
\mc D^b \big( \mh H (G,M,q\cE) -\Mod_{\mr{fgdg}} \big) \\
& = \; \mc D^b \big( \bigoplus\nolimits_{[M,\cC_v^M,q\cE]_G} \mh H (G,M,q\cE) -\Mod_{\mr{fgdg}} \big) .
\end{align*}
}
\end{thm}

We note that replacing $\Ql$ by the isomorphic field $\C$ is allowed because the topology of $\Ql$ 
does not play a role here. Part (a) categorifies $\mh H (G,M,q\cE)$ as differential graded algebra,
while part (b) expresses $\mc D^b_{G \times GL_1} (\mf g_N)$ as a (derived) module category.


\begin{thebibliography}{99}

\bibitem[Ach]{Ach} P.N. Achar,
\emph{Perverse sheaves and applications to representation theory},
Mathematical Surveys and Monographs {\bf 258}, American Mathematical Society, 2021

\bibitem[ABPS]{ABPS} A.-M. Aubert, P.F. Baum, R.J. Plymen, M. Solleveld,
``Conjectures about $p$-adic groups and their noncommutative geometry",
Contemp. Math. {\bf 691} (2017), 15--51

\bibitem[AMS1]{AMS1} A.-M. Aubert, A. Moussaoui, M. Solleveld,
``Generalizations of the Springer correspondence and cuspidal Langlands parameters'',
Manus. Math. {\bf 157} (2018), 121--192

\bibitem[AMS2]{AMS2} A.-M. Aubert, A. Moussaoui, M. Solleveld,
``Graded Hecke algebras for disconnected reductive groups",
pp. 23--84 in: \emph{Geometric aspects of the trace formula, W. M\"uller, S. W. Shin, 
N. Templier (eds.)}, Simons Symposia, Springer, 2018

\bibitem[AMS3]{AMS3} A.-M. Aubert, A. Moussaoui, M. Solleveld,
``Affine Hecke algebras for Langlands parameters",
arXiv:1701.03593v3, 2019

\bibitem[BaMo]{BaMo} D. Barbasch, A. Moy, 
``Unitary spherical spectrum for $p$-adic classical groups", 
Acta Appl. Math. {\bf 44} (1996), 3--37

\bibitem[BBD]{BBD} A. Beilinson, J. Bernstein, P. Deligne,
``Faisceaux pervers", 
pp. 5--171 in: \emph{Analysis and topology on singular spaces (Luminy, 1981)},
Ast\'erisque 100, Soc. Math. France, Paris, 1982

\bibitem[BGS]{BGS} A. Beilinson, V. Ginzburg, W. Soergel,
``Koszul duality patterns in representation theory",
J. Amer. Math. Soc. {\bf 9.2} (1996), 473--527		

\bibitem[BeLu]{BeLu} J. Bernstein, V. Lunts,
\emph{Equivariant sheaves and functors}, Lecture Notes in Mathematics {\bf 1578}, 
Springer-Verlag, Berlin, 1994

\bibitem[Bez]{Bez} R. Bezrukavnikov,
``On two geometric realizations of an affine Hecke algebra",
Publ. math. I.H.\'E.S. {\bf 123} (2016), 1--67      

\bibitem[CaSh]{CaSh} W. Casselman, F. Shahidi,
``On irreducibility of standard modules for generic representations",
Ann. Scient. \'Ec. Norm. Sup (4) {\bf 31} (1998), 561--589   

\bibitem[ChGi]{ChGi} N. Chriss, V. Ginzburg,
\emph{Representation theory and complex geometry},
Birkh\"auser, 1997

\bibitem[DHKM]{DHKM} J.-F. Dat, D. Helm, R. Kurinczuk, G. Moss,
``Moduli of Langlands parameters",
arXiv:2009.06708, 2020

\bibitem[FaSc]{FaSc} L. Fargues, P. Scholze,
``Geometrization of the local Langlands correspondence",
arXiv:2102.13459, 2021

\bibitem[KaLu]{KaLu} D. Kazhdan, G. Lusztig,
``Proof of the Deligne--Langlands conjecture for Hecke algebras",
Invent. Math. {\bf 87} (1987), 153--215

\bibitem[KNS]{KNS} D. Kazhdan, V. Nistor, P. Schneider,
``Hochschild and cyclic homology of finite type algebras",
Sel. Math. New Ser. {\bf 4.2} (1998), 321--359

\bibitem[Lus1]{Lus-Int} G. Lusztig,
``Intersection cohomology complexes on a reductive group'',
Invent. Math. {\bf 75.2} (1984), 205--272

\bibitem[Lus1]{Lus-CharV} G. Lusztig,
``Character sheaves V",
Adv. Math. {\bf 61} (1986), 103--155

\bibitem[Lus2]{Lus-Char} G. Lusztig,
``Introduction to character sheaves",
Proc. Symp. Pure Math. {\bf 47} (1987), 161--179

\bibitem[Lus3]{Lus-Cusp1} G. Lusztig,
``Cuspidal local systems and graded Hecke algebras'',
Publ. Math. Inst. Hautes \'Etudes Sci. {\bf 67} (1988), 145--202

\bibitem[Lus4]{Lus-Gr} G. Lusztig,
``Affine Hecke algebras and their graded version",
J. Amer. Math. Soc {\bf 2.3} (1989), 599--635

\bibitem[Lus5]{Lus-IC} G. Lusztig,
``Intersection cohomology methods in representation theory",
Proc. Int. Congr. Math. Kyoto, Springer, 1991        

\bibitem[Lus6]{Lus-Cusp2} G. Lusztig,
``Cuspidal local systems and graded Hecke algebras. II'',
pp. 217--275 in: \emph{Representations of groups},
Canadian Mathematical Society Conference Proceedings {\bf 16}, 1995

\bibitem[MiVi]{MiVi} I. Mirkovi\'c, K. Vilonen,
``Geometric Langlands duality and representations of algebraic groups over commutative rings",
Ann. Math. {\bf 166.1} (2007), 95--143

\bibitem[Rid]{Rid} L. Rider, 	
``Formality for the nilpotent cone and a derived Springer correspondence",
Adv. Math. {\bf 235} (2013), 208--236

\bibitem[RiRu1]{RiRu1} L. Rider, A. Russell,
``Perverse sheaves on the nilpotent cone and Lusztig's generalized Springer correspondence",
pp. 273--292 in: \emph{Lie algebras, Lie superalgebras, vertex algebras and related topics}, 
Proc. Sympos. Pure Math. {\bf 92}, Amer. Math. Soc., Providence RI, 2016 

\bibitem[RiRu2]{RiRu} L. Rider, A. Russell,
``Formality and Lusztig's generalized Springer correspondence",
Algebras Repr. Th. {\bf 24} (2021), 699--714

\bibitem[Sch]{Sch} O.M. Schn\"urer,		
``Equivariant sheaves on flag varieties",
Math. Z. {\bf 267} (2011), 27--80

\bibitem[Sol1]{SolGHA} M. Solleveld,
``Parabolically induced representations of graded Hecke algebras",
Algebr. Represent. Th. {\bf 15.2} (2012), 233--271

\bibitem[Sol2]{SolK} M. Solleveld,
``Topological K-theory of affine Hecke algebras",
Ann. K-theory {\bf 3.3} (2018), 395--460

\bibitem[Sol3]{SolHecke} M. Solleveld,
``Affine Hecke algebras and their representations",
Indag. Math. {\bf 32.5} (2021), 1005--1082

\bibitem[Sol4]{SolEnd} M. Solleveld,
``Endomorphism algebras and Hecke algebras for reductive $p$-adic groups",
J. Algebra {\bf 606} (2022), 371--470

\bibitem[Sol5]{SolTwist} M. Solleveld,
``Hochschild homology of twisted crossed products and twisted graded Hecke algebras",
Ann. K-theory {\bf 8.1} (2023), 81--126

\bibitem[Sol6]{SolSheaves} M. Solleveld,
``Graded Hecke algebras, constructible sheaves and the $p$-adic Kazhdan--Lusztig 
conjecture", arXiv:2106.03196v2, 2022

\bibitem[Sol7]{SolStand} M. Solleveld,
``On submodules of standard modules",
arXiv:2309.10401, 2023

\bibitem[Vog]{Vog} D. Vogan,	
``The local Langlands conjecture",
pp. 305--379 in: \emph{Representation theory of groups and algebras},
Contemp. Math. {\bf 145}, American Mathematical Society, Providence RI, 1993

\bibitem[Wei]{Wei} C. Weibel,
\emph{An introduction to homological algebra},
Cambridge Studies in Advanced Mathematics {\bf 38}, Cambridge University Press, 1994 

\bibitem[Zhu]{Zhu} X. Zhu,
``Coherent sheaves on the stack of Langlands parameters",
arXiv:2008.02998, 2020	

\end{thebibliography}
\end{document}